\documentclass{jmsj2}

\usepackage{amsmath,bm}
\usepackage{amssymb}
\usepackage{amsthm}
\usepackage{mathtools}
\usepackage{cases}
\usepackage{graphicx,color}
\usepackage{float}

\theoremstyle{plain}
\newtheorem{theorem}{Theorem}[section]
\newtheorem{lemma}[theorem]{Lemma}
\newtheorem{cor}[theorem]{Corollary}
\newtheorem{prop}[theorem]{Proposition}

\theoremstyle{definition}
\newtheorem{definition}[theorem]{Definition}
\newtheorem{example}[theorem]{Example}

\theoremstyle{remark}
\newtheorem{rem}[theorem]{Remark}
\newtheorem*{notation}{Notation}


\newcommand{\relmiddle}[1]{\mathrel{}\middle#1\mathrel{}}
\numberwithin{equation}{section}
\allowdisplaybreaks[3]
%
\DeclareRobustCommand{\qed}{%
  \ifmmode \mathqed
  \else
    \leavevmode\unskip\penalty9999 \hbox{}\nobreak\hfill
    \quad\hbox{\qedsymbol}%
  \fi
}


\newcommand{\E}{\mathbb{E}}

\newcommand{\N}{\mathbb{N}}
\renewcommand{\P}{\mathbb{P}}

\newcommand{\R}{\mathbb{R}}

\newcommand{\Z}{\mathbb{Z}}

\newcommand{\cB}{\mathcal{B}}

\newcommand{\cF}{\mathcal{F}}

\newcommand{\cM}{\mathcal{M}}
\newcommand{\cP}{\mathcal{P}}
\newcommand{\cS}{\mathcal{S}}
\newcommand{\cT}{\mathcal{T}}

\renewcommand{\i}{\mathbf{i}}

\newcommand{\p}{\mathbf{p}}

\renewcommand{\a}{\alpha}
\renewcommand{\b}{\beta}
\newcommand{\dl}{\delta}
\newcommand{\eps}{\varepsilon}

\newcommand{\lm}{\lambda}
\newcommand{\om}{\omega}
\newcommand{\Om}{\Omega}
\newcommand{\sg}{\sigma}
\newcommand{\Sg}{\Sigma}
\newcommand{\la}{\langle}
\newcommand{\ra}{\rangle}
\newcommand{\del}{\partial}


\DeclareMathOperator{\bin}{Bin}

\DeclareMathOperator{\dom}{Dom}
\DeclareMathOperator{\im}{Im}

\DeclareMathOperator{\loc}{loc}
\DeclareMathOperator{\po}{Po}
\DeclareMathOperator{\pt}{PT}

\DeclareMathOperator{\sgn}{sgn}

\DeclareMathOperator{\up}{up}

\begin{document}
\title[Law of large numbers for Betti numbers]{Law of large numbers for Betti numbers of homogeneous and spatially independent random simplicial complexes}
\author{Shu \textsc{Kanazawa}}
\address{Kyoto University Institute for Advanced Study, Kyoto University, Japan}
\email{kanazawa.shu.73m@st.kyoto-u.ac.jp}
\subjclass[2010]{Primary 60C05, 60D05; Secondary 05C80, 55U10, 05E45}
\keywords{random simplicial complex, Betti number, homogeneity, spatial independence, local weak convergence}

\begin{abstract}
The Linial--Meshulam complex model is a natural higher-dimensional analog of the Erd\H os--R\'enyi graph model. In recent years, Linial and Peled established a limit theorem for Betti numbers of Linial--Meshulam complexes with an appropriate scaling of the underlying parameter. The present paper aims to extend that result to more-general random simplicial complex models. We introduce a class of homogeneous and spatially independent random simplicial complexes, including the Linial--Meshulam complex model and the random clique complex model as special cases, and we study the asymptotic behavior of their Betti numbers. Moreover, we obtain the convergence of the empirical spectral distributions of their Laplacians.
A key element in the argument is the local weak convergence of simplicial complexes. Inspired by the work of Linial and Peled, we establish the local weak limit theorem for homogeneous and spatially independent random simplicial complexes.
\end{abstract}

\maketitle

\section{Introduction}
Let $K_n$ be the complete graph on $[n]\coloneqq\{1,2,\ldots,n\}$. An Erd\H os--R\'enyi graph is a random subgraph of $K_n$ with $n$ vertices, where each edge in $K_n$ appears independently with a probability $p\in[0,1]$. The probability distribution is denoted by $G(n,p)$. The Erd\H os--R\'enyi graph model has been extensively studied since the early 1960s~(\cite{ER1},~\cite{ER2},~\cite{Gi}) as a typical random graph model. One of the main themes in the study of the Erd\H os--R\'enyi graph model is searching the threshold probability $p$, typically a function of $n$, for some graph property. The behavior of the random graph around the threshold probability should also be studied as a further theme.
Erd\H os and R\'enyi showed that the threshold probability for the appearance of cycles is $p=1/n$. Furthermore, they established a limit theorem for the number of connected components $\xi$ around the threshold probability as follows.
\begin{theorem}[Erd\H os and R\'enyi~{\cite[Section~6]{ER2}}]\label{thm:LT_ERG}
Let $c>0$ be fixed, and let $G_n\sim G(n,p)$ be an Erd\H os--R\'enyi graph with $p=c/n$. Then, for any $\eps>0$, 
\begin{equation}\label{eq:LT_ERG}
\lim_{n\to\infty}\P\biggl(\biggl|\frac{\xi(G_n)}n-\frac1c\sum_{s=1}^\infty\frac{s^{s-2}}{s!}(ce^{-c})^s\biggr|>\eps\biggr)=0. 
\end{equation}
Here, $\xi(G_n)$ denotes the number of connected components of $G_n$. 
\end{theorem}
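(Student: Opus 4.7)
The plan is to count tree components by size, establishing three ingredients: (i) the number of tree components of each fixed size, divided by $n$, converges in probability to the corresponding term of the target series; (ii) non-tree components of bounded size are rare; and (iii) the population of vertices in large components is uniformly small. Write $X_s(G_n)$ and $Y_s(G_n)$ for the number of tree components and the number of all components of $G_n$ on exactly $s$ vertices, respectively, so that $\xi(G_n)=\sum_{s\ge 1}Y_s(G_n)$. Cayley's formula for labelled trees on $s$ vertices, combined with the shape of the limit, already suggests this decomposition: the $s$-th summand in \eqref{eq:LT_ERG} should be the asymptotic density of tree components of size $s$.

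For (i), summing over $s$-subsets of $[n]$ and over the $s^{s-2}$ spanning trees yields
\begin{equation*}
\E\bigl[X_s(G_n)\bigr]=\binom{n}{s}s^{s-2}p^{s-1}(1-p)^{\binom{s}{2}-(s-1)+s(n-s)},\qquad p=c/n,
\end{equation*}
and the elementary asymptotics $(1-c/n)^{s(n-s)}\to e^{-cs}$ and $\binom{n}{s}(c/n)^{s-1}\sim nc^{s-1}/s!$ give $\E[X_s(G_n)]/n\to c^{-1}s^{s-2}(ce^{-c})^s/s!$. A standard second-moment computation, enumerating ordered pairs of vertex-disjoint labelled tree components, yields $\mathrm{Var}(X_s(G_n))=O(n)$ for each fixed $s$, so Chebyshev's inequality delivers the claimed in-probability convergence of $X_s(G_n)/n$.

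To assemble $\xi(G_n)/n$, I would truncate at some $S$ and write $\xi(G_n)/n=A_S+B_S+C_S$ with $A_S=\sum_{s\le S}X_s(G_n)/n$, $B_S=\sum_{s\le S}(Y_s(G_n)-X_s(G_n))/n$, and $C_S=\sum_{s>S}Y_s(G_n)/n$. The term $C_S$ is bounded deterministically by $1/S$ because $\sum_s sY_s(G_n)=n$. For $B_S$, any connected non-tree graph on $s$ vertices has at least $s$ edges, so a crude first-moment bound shows $\E[Y_s(G_n)-X_s(G_n)]$ stays $O(1)$ in $n$ for each fixed $s$; summing over $s\le S$ and dividing by $n$ gives $B_S\to 0$ in probability by Markov. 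Finally, the tail $\sum_{s>S}c^{-1}s^{s-2}(ce^{-c})^s/s!$ of the limit series is controlled via Stirling: the $s$-th term decays like $s^{-5/2}(ce^{1-c})^s$, and $ce^{1-c}\le 1$ for all $c>0$ makes the series absolutely convergent. Combining the three bounds with (i) and choosing $S=S(\eps)$ large enough proves \eqref{eq:LT_ERG}.

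The main obstacle, I expect, is the second-moment estimate for $X_s(G_n)$: the cross terms arising from ordered pairs of vertex-disjoint tree components contain an almost-exact copy of $\E[X_s(G_n)]^2$ plus lower-order corrections, and the challenge is to expand $\binom{n-s}{s}/\binom{n}{s}$ and $(1-p)^{s(n-2s)}$ carefully enough to see the intended cancellation and isolate the remaining $O(n)$ fluctuation. Once that computation is in hand, everything else reduces to first-moment bookkeeping.
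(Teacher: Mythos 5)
Your argument is correct, but it follows a genuinely different route from the one the paper has in mind. The paper never proves Theorem~\ref{thm:LT_ERG} directly---it is cited from Erd\H os--R\'enyi and then recovered as the $k=0$ case of the general machinery: local weak convergence of $G(n,c/n)$ (viewed as a $1$-Linial--Meshulam complex) to the Poisson Galton--Watson tree $\pt_0(c)$, followed by the spectral-measure and Morse-inequality arguments of Section~5. Your proposal is instead the classical direct moment computation that Erd\H os and R\'enyi themselves used: count tree components of each fixed size $s$, verify $\E[X_s(G_n)]/n\to c^{-1}s^{s-2}(ce^{-c})^s/s!$, control fluctuations by a second moment (which indeed gives $\mathrm{Var}(X_s)=O(n)$ once you expand $\binom{n-s}{s}/\binom{n}{s}$ and $(1-p)^{-s^2}$ to first order in $1/n$ and observe the $\E[X_s]^2$ cancellation), and then dispose of non-tree components via the first moment bound $\E[Y_s-X_s]=O(1)$ and of large components via $\sum_s sY_s=n$, matched against the Stirling-estimate tail $s^{-5/2}(ce^{1-c})^s$ of the limit series (where $ce^{1-c}\le 1$ for all $c>0$). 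Your approach is elementary and self-contained but wedded to the graph setting; the paper's local-weak-convergence route is heavier but is exactly what generalizes to Betti numbers of higher-dimensional random complexes, which is the point of the paper. As a minor remark, your second-moment step should explicitly note that two distinct potential tree components sharing a vertex cannot simultaneously be components, so the only off-diagonal contribution is from vertex-disjoint pairs---this is what makes the $O(n)$ variance bound clean.
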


Recently, there has been growing interest in studying random simplicial complexes as higher-dimensional analogs of random graphs. The systematic study of random simplicial complexes has its origin in the work of Linial and Meshulam~\cite{LM1}. They introduced the $2$-Linial--Meshulam complex model as a higher-dimensional generalization of the Erd\H os--R\'enyi graph model, and the $d$-Linial--Meshulam complex model was studied by Meshulam and Wallach~\cite{LM2}. Let $\triangle_n$ denote the $(n-1)$-dimensional complete complex on $[n]$. For each $d\in\N$, a $d$-Linial--Meshulam complex is a random subcomplex of $\triangle_n$ with all the $(d-1)$-simplices in $\triangle_n$, where each $d$-simplex in $\triangle_n$ appears independently with a probability $p\in[0,1]$. The probability distribution is denoted by $Y_d(n,p)$. Note that a $1$-Linial--Meshulam complex can be naturally regarded as an Erd\H os--R\'enyi graph. 

Since the appearance of cycles in the Erd\H os--R\'enyi graph can be described as the nontriviality of the first homology group, it is natural to seek the threshold probability for the appearance of the $d$th homology group of the $d$-Linial--Meshulam complex. Aronshtam and Linial~\cite{AL} found an upper bound on the threshold probability for the appearance of the $d$th homology group with any field coefficient. Linial and Peled~\cite{LP1} proved that the upper bound is tight as long as the characteristic of the coefficient field is zero. They also studied the asymptotic behavior of the (reduced) Betti numbers of $d$-Linial--Meshulam complexes around the threshold probability. For $c\ge0$, letting $t_{d,c}\in(0,1]$ be the smallest positive root of the equation $t=\exp(-c(1-t)^d)$, we define
\begin{align}\label{eq:h}
h_{d-1}(c)
&\coloneqq\max\biggl\{1-\frac c{d+1},t_{d,c}+ct_{d,c}(1-t_{d,c})^d-\frac c{d+1}\bigl(1-(1-t_{d,c})^{d+1}\bigr)\biggr\}\nonumber\\
&=\begin{cases}
1-\frac c{d+1}													&(0\le c\le c_d),\\
t_{d,c}+ct_{d,c}(1-t_{d,c})^d-\frac c{d+1}\bigl(1-(1-t_{d,c})^{d+1}\bigr)	&(c>c_d). 
\end{cases}
\end{align}
Here, $c_d$ is defined as follows: using the smallest root $x_d\in(0,1]$ of the equation $(d+1)(1-x)+(1+dx)\log x=0$, define $c_d\coloneqq-(\log x_d)/(1-x_d)^d$ for $d\ge2$ and set $c_1\coloneqq1$~(see Remark~1.2 and Appendix~B in~\cite{LP1} for more details). The following theorem is immediately obtained by combining Theorems~1.1 and~1.3 in~\cite{LP1} with the Euler--Poincar\'e formula.
\begin{theorem}[Linial and Peled~\cite{LP1}]\label{thm:LT_B_LMC}
Let $d\in\N$ and $c>0$ be fixed, and let $Y_n\sim Y_d(n,p)$ be a $d$-Linial--Meshulam complex with $p=c/n$. Then, for any $\eps>0$, 
\[
\lim_{n\to\infty}\P\biggl(\biggl|\frac{\b_{d-1}(Y_n)}{n^d}-\frac{h_{d-1}(c)}{d!}\biggr|>\eps\biggr)=0. 
\]
\end{theorem}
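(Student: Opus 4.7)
The plan is to reduce the statement to one about the top Betti number $\b_d(Y_n)$ via the Euler--Poincar\'e formula, then to compute $\b_d$ through an iterative collapse analyzed by a branching process. Since $Y_n$ contains the full $(d-1)$-skeleton of $\triangle_n$, its lower Betti numbers are trivial: $\b_0(Y_n)=1$ and $\b_i(Y_n)=0$ for $1\le i\le d-2$. The Euler--Poincar\'e formula therefore reads
\[
(-1)^{d-1}\b_{d-1}(Y_n)+(-1)^d\b_d(Y_n)=\sum_{i=0}^{d-1}(-1)^i\binom{n}{i+1}+(-1)^d f_d(Y_n)-1.
\]
Since $f_d(Y_n)\sim\bin\bigl(\binom{n}{d+1},c/n\bigr)$ concentrates at $cn^d/(d+1)!+o(n^d)$, dividing by $n^d$ yields
\[
\frac{(-1)^{d-1}}{n^d}\bigl(\b_{d-1}(Y_n)-\b_d(Y_n)\bigr)\xrightarrow{\P}\frac{(-1)^{d-1}}{d!}\biggl(1-\frac{c}{d+1}\biggr),
\]
so the claim reduces to controlling $\b_d(Y_n)/n^d$.

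For the latter I would write $\b_d(Y_n)=f_d(Y_n)-\rank(\partial_d)$ and perform the greedy collapse that repeatedly removes any $d$-simplex having a $(d-1)$-face shared with no other $d$-simplex. Such an elementary collapse is a simple-homotopy equivalence, so $\b_d$ is unchanged on the resulting $d$-core $\core_d(Y_n)$, while algebraically each collapse reduces $\rank(\partial_d)$ by exactly one. The fractions of $(d-1)$-faces and $d$-simplices that survive the collapse are governed by a multi-type Poisson Galton--Watson process in which a typical $(d-1)$-face has $\po(c)$ incident $d$-simplex children, each of which carries $d$ further $(d-1)$-face children. The fraction of $(d-1)$-faces eventually deleted solves the fixed-point equation $t=\exp(-c(1-t)^d)$, whose smallest positive root is $t_{d,c}$. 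First-moment calculations then pin down the normalized $f$-vector of $\core_d(Y_n)$ in terms of $t_{d,c}$, and a short-range second-moment argument yields concentration.

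Combining these ingredients, in the subcritical regime $0\le c\le c_d$ one has $t_{d,c}=1$: the core is empty with high probability, $\b_d(Y_n)=o(n^d)$, and only the Euler-characteristic term survives, producing the first branch of~\eqref{eq:h}. In the supercritical regime $c>c_d$ one has $t_{d,c}<1$ and the core occupies a positive fraction of simplices; substituting the branching-process densities into the Euler--Poincar\'e formula applied to $\core_d(Y_n)$ yields the second branch of~\eqref{eq:h}.

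The main obstacle is matching the branching-process prediction for $\b_d$ to the actual rank of $\partial_d$ on the core. The first-moment computation readily produces a candidate value, but converting it into a concentration statement for $\rank(\partial_d)$ requires controlling the whole spectrum of the top boundary operator, not merely its combinatorial shadow. I would attack this by proving that $(Y_n,v)$ rooted at a uniformly random vertex converges in the local weak sense to the multi-type Galton--Watson limit, and then running a Bordenave--Lelarge--Salez-type argument: the empirical spectral distribution of $\partial_d\partial_d^\top$ converges to the spectral measure of the corresponding limit operator, and $\rank(\partial_d)/n^d$ is recovered from the mass at nonzero eigenvalues. Establishing this local weak limit theorem in the combinatorial category of simplicial complexes is the technical heart of the argument and is by far the hardest step.
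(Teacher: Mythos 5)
The paper derives Theorem~\ref{thm:LT_B_LMC} by citing Theorems~1.1 and~1.3 of Linial--Peled~\cite{LP1} and applying the Euler--Poincar\'e formula, which is precisely your reduction to $\b_d(Y_n)$; and the ingredients you sketch for the analysis of $\b_d$ (iterated elementary collapse governed by a Poisson branching process, first-moment identification of the core via the fixed-point equation $t=\exp(-c(1-t)^d)$, and a local weak limit theorem feeding a spectral argument for $\rank(\partial_d)$) are exactly the ingredients of~\cite{LP1}, which Section~5 of this paper generalizes. So your overall route is the paper's route.

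Two concrete problems remain, though. First, the assertion that ``in the subcritical regime $0\le c\le c_d$ one has $t_{d,c}=1$: the core is empty with high probability'' is false for $d\ge2$. The fixed-point equation $t=\exp(-c(1-t)^d)$ already admits a root strictly below $1$ at $c=c_d$---indeed $t_{d,c_d}=x_d<1$ by the very definition of $c_d$---so the $d$-collapsibility threshold sits strictly below $c_d$, and throughout the intervening window the core occupies a positive fraction of the $d$-simplices while $\b_d(Y_n)$ is still $o(n^d)$. Closing that window is the new content of~\cite{LP1} beyond~\cite{AL} and~\cite{ALLM}, and it is handled by precisely the spectral step you flag as the ``main obstacle''; conflating the collapsibility and homology thresholds hides where the real difficulty lies. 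Second, local weak convergence of rooted spectral measures yields only upper semi-continuity of $\mu\mapsto\mu(\{0\})$, so the spectral argument gives only an \emph{upper} bound on $\b_{d-1}/f_{d-1}$ (this is exactly Proposition~\ref{prop:UB_B}). The matching lower bound has to come separately, from the iterated-collapse count combined with a Morse-type inequality (Propositions~\ref{prop:MorseIneq} and~\ref{prop:lowerBetti}). Your proposal neither supplies this lower estimate nor signals that a separate argument is required; $\rank(\partial_d)/n^d$ is not simply ``recovered from the mass at nonzero eigenvalues'' of the limit measure.
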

\begin{figure}[H]
\centering
\includegraphics[width=10cm,bb=0 0 576 239]{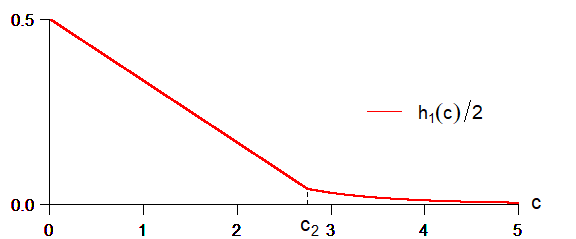}
\caption{Illustration of Theorem~\ref{thm:LT_B_LMC} for $d=2$.}
\label{fig:h1}
\end{figure}
When $d=1$, Theorem~\ref{thm:LT_B_LMC} corresponds to Theorem~\ref{thm:LT_ERG}
since the zeroth (reduced) Betti number is just one less than the number of connected components,
and $h_0(c)$ is identical to the limiting constant in Eq.~\eqref{eq:LT_ERG}. Figure~\ref{fig:h1} illustrates the behavior of the limiting constant $h_{d-1}(c)/d!$ with respect to the parameter $c$ when $d=2$. The aim of this paper is to generalize Theorem~\ref{thm:LT_B_LMC} to more-general random simplicial complex models. 

Kahle~\cite{K14a} introduced another random simplicial complex model called the random clique complex model. Given a simple undirected graph $G$, its \textit{clique complex} is defined as the inclusion-wise maximal simplicial complex among other simplicial complexes whose underlying graphs are identical to $G$. The clique complex of an Erd\H os--R\'enyi graph that follows $G(n,p)$ is called a random clique complex. The probability distribution is denoted by $C(n,p)$. Whenever $k\ge1$, the $k$th Betti number of the random clique complex behaves almost unimodally with respect to the parameter $p$, unlike the case of the Linial--Meshulam complex model~(see Figure~4 in~\cite{K14b} for a numerical experiment by Afra Zomorodian). 

Our first result herein is the counterpart of Theorem~\ref{thm:LT_B_LMC} to the random clique complex model. 
\begin{theorem}\label{thm:LT_B_RCC}
Let $k\ge0$ and $c>0$ be fixed, and let $C_n\sim C(n,p)$ be a random clique complex with $p=(c/n)^{1/(k+1)}$. Then, for any $r\in[1,\infty)$, 
\[
\lim_{n\to\infty}\E\biggl[\biggl|\frac{\b_k(C_n)}{n^{k/2+1}}-\frac{c^{k/2}h_k(c)}{(k+1)!}\biggr|^r\biggr]=0. 
\]
\end{theorem}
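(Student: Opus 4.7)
The plan is to realize the random clique complex model $C(n,(c/n)^{1/(k+1)})$ as an instance of the homogeneous and spatially independent random simplicial complexes introduced later in this paper, and then to invoke the general local weak limit theorem together with its corollary, a law of large numbers for Betti numbers. The proof then reduces to three tasks: (i) matching the rooted local weak limit of $C_n$ at the $k$-simplex level with a known branching structure; (ii) computing the normalization via the number of $k$-faces; and (iii) upgrading convergence in probability to convergence in $L^r$.

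For the normalization, a direct computation gives $\E[f_k(C_n)]=\binom{n}{k+1}p_n^{\binom{k+1}{2}}\sim c^{k/2}n^{k/2+1}/(k+1)!$ with $p_n=(c/n)^{1/(k+1)}$, and moment estimates for the number of $(k+1)$-cliques in $G(n,p_n)$ yield $f_k(C_n)/n^{k/2+1}\to c^{k/2}/(k+1)!$ in every $L^r$ norm, where $f_k(C_n)$ is the number of $k$-faces. For the local weak limit, observe that the upward degree of a fixed $k$-face $\sg\in C_n$ — the number of $(k+1)$-cliques extending $\sg$ — is $\bin(n-k-1,p_n^{k+1})=\bin(n-k-1,c/n)$, which converges to $\po(c)$, exactly as for the $(k+1)$-Linial--Meshulam complex $Y_{k+1}(n,c/n)$. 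Since $p_n\to 0$, the probability that two vertices appearing in the link of $\sg$ at bounded combinatorial depth are themselves adjacent decays, so no spurious higher-dimensional faces survive in the limit. Consequently $C_n$ and $Y_{k+1}(n,c/n)$ admit the same rooted local weak limit at a uniform random $k$-face. Applying the paper's local-weak-limit law of large numbers for Betti numbers then gives $\b_k(C_n)/f_k(C_n)\to h_k(c)$ in probability, with $h_k(c)$ as in Eq.~\eqref{eq:h}, and combining with the normalization produces the stated limit in probability. The $L^r$ convergence follows from the deterministic bound $0\le\b_k(C_n)\le f_k(C_n)$ together with the $L^r$ boundedness of $f_k(C_n)/n^{k/2+1}$.

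The main obstacle is the verification of local weak convergence itself. Unlike the Linial--Meshulam model, the clique complex exhibits strong algebraic couplings between simplices of different dimensions, so spatial independence must be read off from the underlying Erd\H os--R\'enyi graph rather than from the simplicial structure directly. Establishing that truncated neighborhoods of a uniform random root $k$-face are, with high probability, isomorphic to those of the Linial--Meshulam local limit requires a careful combinatorial count to rule out short cycles of clique extensions and extra edges among link vertices at the chosen scaling $p_n=(c/n)^{1/(k+1)}$. Once this is secured, the limiting constant $h_k(c)$ emerges automatically from the paper's general formula, reflecting that the same Poisson-$c$ upward branching governs both models.
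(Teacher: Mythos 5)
Your proposal is correct and follows essentially the same route as the paper: realize $C(n,(c/n)^{1/(k+1)})$ as a multi-parameter (hence homogeneous and spatially independent) random simplicial complex, check that $n^{k+1}q_k = c^{k/2}n^{k/2+1} = \om(1)$ and $nr_k = c$, and then invoke Theorem~\ref{thm:LT_B_HSIRSC}~(1) (which is driven by the local weak limit Theorem~\ref{thm:LWLT_HSIRSC} to the $k$-rooted Poisson tree, together with Lemma~\ref{lem:num_faces} for the $f_k$ normalization and the bound $\b_k\le f_k$ for the $L^r$ upgrade). The only inefficiency is in the final paragraph: the "careful combinatorial count to rule out short cycles" you flag as a remaining obstacle is already handled once and for all, for every homogeneous spatially independent model at this scaling, by the breadth-first-traversal machinery behind Theorem~\ref{thm:LWLT_HSIRSC} (Lemma~\ref{lem:qrs} gives $s_k=o(1)$, which kills the excess edges and short cycles you worry about), so no model-specific verification of local weak convergence is needed and the comparison with $Y_{k+1}(n,c/n)$ is a restatement of the shared Poisson-tree limit rather than an independent step.
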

\begin{figure}[H]
\centering
\includegraphics[width=10cm,bb=0 0 576 239]{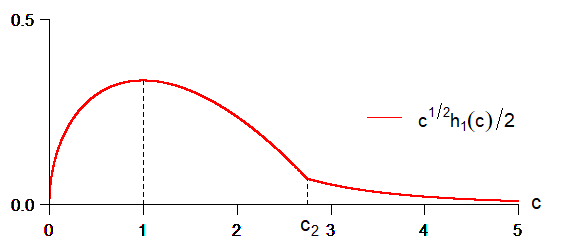}\\
\caption{Illustration of Theorem~\ref{thm:LT_B_RCC} for $k=1$.}
\label{fig:h1clique}
\end{figure}
The $k=0$ case in the theorem above corresponds to Theorem~\ref{thm:LT_ERG} since the underlying graph of $C_n$ is an Erd\H os--R\'enyi graph that follows $G(n,p)$ with $p=c/n$.
Whenever $k\ge1$, the limiting constant $c^{k/2}h_k(c)/(k+1)!$ is unimodal with respect to $c$ as shown in Figure~\ref{fig:h1clique}. Informally speaking, the unimodality comes from the competitive relationship between the effect of creating $k$-dimensional cycles with some $k$-simplices and that of filling them with some $(k+1)$-simplices with an increasing number of total simplices. These two effects complicate the situation and give rise to the critical difference between the Linial--Meshulam complex model and the random clique complex model as seen in Figures~\ref{fig:h1} and~\ref{fig:h1clique}. 

Herein, we introduce two properties, namely, \textit{homogeneity} and \textit{spatial independence}, that are satisfied with both the Linial--Meshulam complex model and the random clique complex model.
As seen in Theorem~\ref{thm:cha_HSI}, these two properties turn out to precisely characterize the multi-parameter random simplicial complex model extensively studied in~\cite{CF16},~\cite{CF17a},~\cite{CF17b},~\cite{CF},~\cite{Fo} (see Section~\ref{sec:HSIRSC} for details).
We then provide a limit theorem for the Betti numbers of homogeneous and spatially independent random simplicial complexes~(Theorem~\ref{thm:LT_B_HSIRSC}~(1)). This result can be regarded as a law of large numbers for the Betti numbers. Applying the result to the Linial--Meshulam complex model and the random clique complex model, we can obtain Theorems~\ref{thm:LT_B_LMC} and~\ref{thm:LT_B_RCC}, respectively, as special cases. 

To prove the main theorem, we use the concept of the \textit{local weak convergence} of simplicial complexes. This concept is a generalization of the Benjamini--Schramm convergence of graphs, introduced by Benjamini and Schramm~\cite{BeS} and Aldous and Steele~\cite{AS}. The local weak convergence is critical for estimating the asymptotic behavior of the Betti numbers. This type of approach has been studied in various contexts~(\cite{AL},~\cite{ALLM},~\cite{E},~\cite{LP1},~\cite{LP2},~\cite{S}). Inspired by those studies, especially the formulation in~\cite{LP2}, we establish the local weak limit theorem for homogeneous and spatially independent random simplicial complexes~(Theorem~\ref{thm:LWLT_HSIRSC}),
where a higher-dimensional generalization of the Poisson branching process arises as a universal limiting object.
Consequently, we also obtain the convergence of the empirical spectral distributions of their Laplacians~(Theorem~\ref{thm:LT_B_HSIRSC}~(2)).

This paper is organized as follows. Section~2 presents some basic concepts related to the cohomology of simplicial complexes and defines  the local weak convergence of simplicial complexes. In Section~3, we describe the homogeneity and spatial independence of random simplicial complexes. In Section~4, we discuss the local weak limit theorem for homogeneous and spatially independent random simplicial complexes. Finally, in Section~5 we state the main limit theorem for the Betti numbers of homogeneous and spatially independent random simplicial complexes and the empirical spectral distributions of their Laplacians. The proof of the main theorem is presented invoking Section~4.

\begin{notation}
Throughout this paper, we use the Bachmann--Landau big-$O$/little-$o$ notation and some related notation with respect to the number of vertices $n$ tending to infinity.
For non-negative functions $f(n)$ and $g(n)$, 
\begin{itemize}
\item $f(n)=O(g(n))$ means that there exists a constant $C\ge0$ such that for sufficiently large $n$, $f(n)\le Cg(n)$ holds,
\item $f(n)=o(g(n))$ means that for any $\eps>0$ and for sufficiently large $n$, $f(n)\le\eps g(n)$ holds,
\item $f(n)=\Om(g(n))$ means that $g(n)=O(f(n))$,
\item $f(n)=\om(g(n))$ means that $g(n)=o(f(n))$,
\item $f(n)\asymp g(n)$ means that $f(n)=O(g(n))$ and $f(n)=\Om(g(n))$, and
\item $f(n)\sim g(n)$ means that $\lim_{n\to\infty}f(n)/g(n)=1$. 
\end{itemize}
For a random variable $X$ and a probability measure $\nu$, we also use the symbol $\sim$. The notation $X\sim \nu$ indicates that the distribution of $X$ coincides with $\nu$. 
Given a topological space $S$, we denote by $\cB_S$ and $\cP_S$ the collection of all Borel sets on $S$ and the set of all Borel probability measures on $S$, respectively. Furthermore, let $C_b(S)$ indicate the set of all bounded continuous real functions on $S$. 
For $a,b\in\R$, we write $a\wedge b=\min\{a,b\}$ and $a\vee b=\max\{a,b\}$. 
\end{notation}

\section{Preliminaries}
\subsection{Simplicial cohomology}
Let $X$ be a collection of finite subsets of a set $V$. $X$ is called a \textit{simplicial complex} on $V$ if $X$ satisfies the following two conditions: (i)~$\{v\}\in X$ for all $v\in V$; (ii)~$\sg\in X$ and $\tau\subset\sg$ together imply that $\tau\in X$.
We often simply say that $X$ is a simplicial complex and let $V(X)$ indicate the vertex set $V$. Note that all simplicial complexes include the empty set. 
Below, we describe some notation and terminology for a given simplicial complex $X$. Each element $\sg\in X$ is called a \textit{simplex} in $X$, and a (strict) subset $\tau$ of $\sg$ is called a (strict) \textit{face} of $\sg$. The \textit{dimension} of $\sg\in X$ is defined by $\dim\sg\coloneqq\#\sg-1$. We call $\sg\in X$ with $\dim\sg=k$ a \textit{$k$-simplex} in $X$. The \textit{dimension} of $X$, denoted by $\dim X$, is defined as the supremum of the dimensions of the simplices in $X$. For $k\ge-1$, let $F_k(X)$ denote the set of all $k$-simplices in $X$, and set $f_k(X)\coloneqq\#(F_k(X))$. 
The \textit{degree} of a $k$-simplex $\tau$ in $X$, denoted by $\deg(X;\tau)$, is defined as the number of $(k+1)$-simplices in $X$ containing $\tau$. $X$ is said to be \textit{locally-finite} if every nonempty simplex in $X$ has a finite degree. Furthermore, $X$ is said to be \textit{finite} when $V(X)$ is a finite set. 
A simplicial complex that is contained in $X$ is called a \textit{subcomplex} of $X$. Given a simplex $\tau$ in $X$, let $K(\tau)$ denote the subcomplex of $X$ consisting of all the faces of $\tau$. For $k\ge-1$, the \textit{$k$-skeleton} of $X$ is defined as a subcomplex $X^{(k)}\coloneqq\bigsqcup_{j=-1}^k F_j(X)$. 

Next, we introduce the concepts of the \textit{simplicial cohomology}. Let $X$ be a simplicial complex on $V$. A sequence $(v_0,v_1,\ldots,v_k)\in V^{k+1}$ is called an \textit{ordered} $k$-simplex in $X$ if $ \{v_0,v_1,\ldots,v_k\}\in F_k(X)$. Let $\Sg_k(X)$ denote the set of all ordered $k$-simplices in $X$. By convention, we set $\Sg_{-1}(X)\coloneqq\{\emptyset\}$. When two ordered simplices can be transformed into each other by an even permutation, they are said to be equivalent. We denote the equivalence class of an ordered simplex $\sg=(v_0,v_1,\ldots,v_k)$ by $\la\sigma\ra$ or $\la v_0,v_1,\ldots,v_k\ra$, and call it an \textit{oriented} $k$-simplex. 
For $k\ge-1$, a map $\varphi\colon\Sg_k(X)\to\R$ is called a \textit{$k$-cochain} of $X$ if $\varphi$ is alternating, that is, if $\varphi((v_{\xi(0)},v_{\xi(1)},\dots,v_{\xi(k)}))=(\sgn \xi)\,\varphi((v_0,v_1,\dots,v_k))$ for any $(v_0,v_1,\dots,v_k)\in\Sg_k(X)$ and permutation $\xi$ on $\{0,1,\dots,k\}$. Let $C^k(X)$ be the $\R$-vector space of all $k$-cochains of $X$. Note that $C^{-1}(X)=\R^{\{\emptyset\}}\simeq\R$. For $k\ge-1$, the $k$th \textit{coboundary map} $d_k\colon C^k(X)\to C^{k+1}(X)$ is defined as the linear extension of
\begin{equation}\label{eq:def_dk}
d_k\varphi(\sg)\coloneqq\sum_{i=0}^{k+1} (-1)^i\varphi((v_0,\ldots,\hat v_i,\ldots,v_{k+1}))
\end{equation}
for $\varphi\in C^k(X)$ and $\sg=(v_0,v_1,\ldots,v_{k+1})\in \Sg_{k+1}(X)$. Here, the hat symbol over $v_i$ indicates that this vertex is deleted from $\sg$. For $k\ge0$, define $Z^k(X)\coloneqq\ker d_k$ and $B^k(X)\coloneqq\im d_{k-1}$. A straightforward calculation gives $d_k\circ d_{k-1}=0$ for all $k\ge0$, that is, $Z^k(X)\supset B^k(X)$. The $k$th \textit{cohomology vector space} with coefficients in $\R$ is defined as $H^k(X)\coloneqq Z^k(X)/B^k(X)$. When $X$ is finite, the dimension of $H^k(X)$ is called the $k$th \textit{Betti number} of $X$, denoted by $\b_k(X)$. 

\subsection{Rooted spectral measure and empirical spectral distribution}
Let $H$ be a Hilbert space with an inner product $(\cdot,\cdot)$, and let $\|\cdot\|$ be the induced norm. A densely defined symmetric operator $L$ on $H$ is said to be \textit{essentially self-adjoint} if the closure of $L$ is self-adjoint. Associated with an essentially self-adjoint operator $L$ and $\varphi\in\dom(L)$ with $\|\varphi\|=1$ is the \textit{spectral measure} $\mu_{L,\varphi}$, which is a unique probability measure on $\R$ such that for all $m\in\N$, 
\[
(L^m\varphi,\varphi)=\int_\R x^m\,d\mu_{L,\varphi}(x). 
\]
In the case when $N\coloneqq\dim H<\infty$, the spectral measure $\mu_{L,\varphi}$ is discrete. Let $\lm_i$ and $\psi_i$~($i=1,2,\ldots,N$) be the eigenvalues of $L$ and the corresponding orthonormal basis of eigenvectors, respectively. Then, a simple calculation gives
\begin{equation}\label{eq:finite_spe}
\mu_{L,\varphi}=\sum_{i=1}^N(\varphi,\psi_i)^2\dl_{\lm_i}. 
\end{equation}
Here, $\dl_{\lm_i}$ is the Dirac measure at $\lm_i$, that is, for any measurable set $A\subset\R$,
\[
\dl_{\lm_i}(A)=\begin{cases}
0	&\text{if $\lm_i\notin A$,}\\
1	&\text{if $\lm_i\in A$.}
\end{cases}
\]
In such a case, we can also consider the \textit{empirical spectral distribution} of $L$: 
\[
\mu_L\coloneqq\frac1N\sum_{i=1}^N\dl_{\lm_i}. 
\]

Now, let $X$ be a locally-finite simplicial complex. For $k\ge-1$, we consider the Hilbert space $\ell^2C^k(X)\coloneqq\{\varphi\in C^k(X)\mid\sum_{\sg\in\Sg_k(X)}\varphi(\sg)^2<\infty\}$ with an inner product
\[
(\varphi,\psi)_k\coloneqq\frac1{(k+1)!}\sum_{\sg\in\Sg_k(X)}\varphi(\sg)\psi(\sg). 
\]
By the linear extension of Eq.~\eqref{eq:def_dk}, we can consider a densely defined operator $d_k^{(2)}\colon\ell^2C^k(X)\to\ell^2C^{k+1}(X)$ whose domain includes all $\varphi\in C^k(X)$ with finite support. The $k$th \textit{up Laplacian}~$L_k^{\up}(X)$ on $\ell^2C^k(X)$ is defined by $L_k^{\up}(X)\coloneqq\bigl(d_{k+1}^{(2)}\bigr)^*\circ d_k^{(2)}$, where $\bigl(d_{k+1}^{(2)}\bigr)^*$ is the adjoint operator of $d_{k+1}^{(2)}$ with respect to the inner products $(\cdot,\cdot)_k$ and $(\cdot,\cdot)_{k+1}$. It is easy to confirm that $L_k^{\up}(X)$ is a densely defined symmetric and non-negative definite operator with respect to the inner product $(\cdot,\cdot)_k$. 
Now, we consider a pair $(X,\tau)$ of $X$ and a $k$-simplex $\tau$ in $X$, namely, a \textit{$k$-rooted simplicial complex}. Let $(e_\tau)_{\tau\in F_k(X)}$ be a canonical orthonormal basis of $\ell^2C^k(X)$. When $L_k^{\up}(X)$ is essentially self-adjoint, the spectral measure associated with $L_k^{\up}(X)$ and $e_\tau$ is called the \textit{rooted spectral measure} of $(X,\tau)$, denoted by $\mu_{(X,\tau)}$. When $X$ is finite, Eq.~\eqref{eq:finite_spe} implies that
\[
\mu_{(X,\tau)}=\sum_{i=1}^{f_k(X)}(e_\tau,\psi_i)_k^2\dl_{\lm_i}, 
\]
where $\lm_i$ and $\psi_i$~($i=1,2,\ldots,f_k(X)$) are the eigenvalues of $L_k^{\up}(X)$ and the corresponding orthonormal basis of eigenvectors, respectively. Therefore, we obtain
\begin{align}
\frac1{f_k(X)}\sum_{\tau\in F_k(X)}\mu_{(X,\tau)}
&=\frac1{f_k(X)}\sum_{\tau\in F_k(X)}\Biggr(\sum_{i=1}^{f_k(X)}(e_\tau,\psi_i)_k^2\dl_{\lm_i}\Biggr)\nonumber\\
&=\frac1{f_k(X)}\sum_{i=1}^{f_k(X)}\Biggl(\sum_{\tau\in F_k(X)}(\psi_i,e_\tau)_k^2\Biggr)\dl_{\lm_i}\nonumber\\
&=\frac1{f_k(X)}\sum_{i=1}^{f_k(X)}\dl_{\lm_i}\nonumber\\
&=\mu_{L_k^{\up}(X)}, \label{eq:SpeMeas}
\end{align}
which implies that the empirical spectral distribution is the spatial average of all the rooted spectral measures. In other words, the rooted spectral measure can be regarded as the local contribution of the root to the empirical spectral distribution. 

\subsection{Local weak convergence of simplicial complexes}
For any $k$-rooted simplicial complexes $(X,\tau)$ and $(X',\tau')$, the equivalence $(X,\tau)\simeq(X',\tau')$ means that $(X,\tau)$ and $(X',\tau')$ are root-preserving simplicial isomorphic. The equivalence class of $(X,\tau)$ is denoted by $[X,\tau]$. Given a $k$-rooted simplicial complex $(X,\tau)$, we define a nondecreasing sequence $(X_l)_{l=0}^\infty$ of subcomplexes of $X$ iteratively: 
\[
X_0\coloneqq K(\tau)\quad\text{and}\quad X_l\coloneqq X_{l-1}\cup\bigcup_{\sg\in B_l}K(\sg)\quad\text{for }l\ge1,
\]
where $B_l$ is the set of all simplices in $X$ containing at least one $k$-simplex in $X_{l-1}$. A simplex $\sg$ in $X$ is said to be of distance $l$ from $\tau$ if $\sg\in X_l\setminus X_{l-1}$. We additionally set $X_\infty\coloneqq\bigcup_{l=0}^\infty X_l$. We then define $k$-rooted simplicial complexes $(X,\tau)_l\coloneqq(X_l,\tau)$ for $l\ge0$, and $X(\tau)=(X_\infty,\tau)$. For the simplicity, let us denote the equivalence class of $(X,\tau)_l$ by $[X,\tau]_l$ for $l\ge0$, and similarly that of $X(\tau)$ by $X[\tau]$. 

Let $\cS_k$ denote the set of all equivalence classes $[X,\tau]$ such that $X$ is locally-finite and $X(\tau)=(X,\tau)$. We define a metric $d_{\loc}$ on $\cS_k$ by letting the distance between $[X_1,\tau_1]$ and $[X_2,\tau_2]$ be $2^{-L}$, where $L$ is the supremum of those $l\in\Z_{\ge0}$ such that $(X_1,\tau_1)_l\simeq(X_2,\tau_2)_l$. Here, we set $2^{-\infty}=0$ by convention.
This metric is called the \textit{local distance}, and the convergence with respect to $d_{\loc}$ is called the \textit{local convergence}. This makes $(\cS_k,d_{\loc})$ into a complete separable metric space. Furthermore, it is easy to verify that $d_{\loc}$ is an ultrametric. 
We say that a sequence $(\mu_n)_{n=1}^\infty$ in $\cP_{\cS_k}$ converges weakly to $\mu\in\cP_{\cS_k}$ if
\[
\lim_{n\to\infty}\int_{\cS_k}g\,d\mu_n=\int_{\cS_k}g\,d\mu
\]
for all $g\in C_b(\cS_k)$. Since $(\cS_k,d_{\loc})$ is a separable ultrametric space, every open set of $\cS_k$ is expressed by a disjoint union of a countable number of open balls. Therefore, it is easy to confirm that the weak convergence in $\cP_{\cS_k}$ is characterized by the convergence of mass on all the open balls as follows.
\begin{lemma}\label{lem:chara_LWC}
Let $(\mu_n)_{n=1}^\infty$ be a sequence in $\cP_{\cS_k}$, and let $\mu\in\cP_{\cS_k}$. Then, $\mu_n$ converges weakly to $\mu$ if and only if, for any $[X,\tau]\in\cS_k$ and $r>0$, 
\[
\lim_{n\to\infty}\mu_n(B([X,\tau],r))=\mu(B([X,\tau],r)). 
\]
Here, $B([X,\tau],r)$ indicates the open ball of radius $r$ centered at $[X,\tau]$. 
\end{lemma}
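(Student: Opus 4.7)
The plan is to treat the two implications separately, with the ultrametric property of $d_{\loc}$ doing most of the work in both. The forward direction rests on the observation that, in any ultrametric space, every open ball is also closed: if $d_{\loc}(y,x)\ge r$ and $y_n\to y$, then for large $n$ the ultrametric inequality forces $d_{\loc}(y_n,x)\ge r$. Consequently each $B([X,\tau],r)$ has empty topological boundary, so $\mu(\partial B([X,\tau],r))=0$ for every $\mu\in\cP_{\cS_k}$, and the Portmanteau theorem applied to this continuity set gives $\mu_n(B([X,\tau],r))\to\mu(B([X,\tau],r))$ whenever $\mu_n$ converges weakly to $\mu$.

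For the reverse direction, I fix $g\in C_b(\cS_k)$ with $\|g\|_\infty\le M$ and $\eps>0$, and approximate $g$ by a step function supported on a finite disjoint union of open balls. Since $(\cS_k,d_{\loc})$ is complete and separable, $\mu$ is tight, so there exists a compact set $K\subset\cS_k$ with $\mu(K^c)<\eps$. For each $x\in K$, continuity of $g$ supplies $r_x>0$ with $|g(y)-g(x)|<\eps$ on $B(x,r_x)$; compactness of $K$ extracts a finite subcover $\{B(x_i,r_i)\}_{i=1}^{N}$. The ultrametric dichotomy---any two open balls are either disjoint or one contains the other---allows me, by processing the balls in decreasing order of radius and dropping any that is contained in an already-retained one, to obtain a pairwise disjoint subfamily $\{B'_j\}_{j=1}^{N'}$ still covering $K$, each $B'_j=B(x'_j,r'_j)$ satisfying $|g(y)-g(x'_j)|<\eps$ for all $y\in B'_j$.

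With $U\coloneqq\bigsqcup_j B'_j$ and $g_\eps\coloneqq\sum_j g(x'_j)\mathbf{1}_{B'_j}$, the hypothesis applied to each $B'_j$ yields $\int g_\eps\,d\mu_n\to\int g_\eps\,d\mu$ and $\mu_n(U^c)\to\mu(U^c)\le\mu(K^c)<\eps$. The decomposition
\[
\biggl|\int g\,d\mu_n-\int g\,d\mu\biggr|\le\biggl|\int(g-g_\eps)\,d\mu_n\biggr|+\biggl|\int g_\eps\,d(\mu_n-\mu)\biggr|+\biggl|\int(g_\eps-g)\,d\mu\biggr|,
\]
combined with $|g-g_\eps|<\eps$ on $U$ and $|g-g_\eps|\le M$ on $U^c$, bounds the left-hand side by $O(M\eps)$ for $n$ large; letting $\eps\to 0$ completes the proof.

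The main obstacle is the reverse direction: one has no a priori tightness for $(\mu_n)$, so bounding $\int_{K^c}|g|\,d\mu_n$ uniformly in $n$ is not available by soft means. The ultrametric construction circumvents this, because the set $U$ on which $g_\eps$ approximates $g$ well is itself a finite disjoint union of open balls; the hypothesis then transfers the mass bound $\mu(U^c)<\eps$ directly to $\mu_n(U^c)$ for $n$ large, with no need for tightness of the approximating sequence.
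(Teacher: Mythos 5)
Your proof is correct, and while the forward direction (open balls are clopen, hence $\mu$-continuity sets, then Portmanteau) coincides with what the paper has in mind, your reverse direction takes a genuinely different route from the one the paper sketches. The paper's one-sentence hint---``every open set of $\cS_k$ is expressed by a disjoint union of a countable number of open balls''---points to the following argument: for an arbitrary open set $G=\bigsqcup_{i\ge1}B_i$, the hypothesis together with Fatou's lemma (for series) gives $\liminf_n\mu_n(G)\ge\sum_i\liminf_n\mu_n(B_i)=\sum_i\mu(B_i)=\mu(G)$, and then the open-set form of the Portmanteau criterion yields weak convergence. You instead work directly with the definition via bounded continuous test functions: you invoke Ulam's theorem to get a compact $K$ carrying most of $\mu$'s mass, use the ultrametric dichotomy (two open balls are nested or disjoint) to reduce a finite cover of $K$ by ``small-oscillation'' balls to a pairwise disjoint one, and approximate $g$ by a step function on this disjoint union. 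The decisive observation---which you state explicitly and is the crux that makes the argument work without any tightness for $(\mu_n)_{n}$---is that the approximating set $U$ is itself a \emph{finite} disjoint union of open balls, so the hypothesis transfers the bound $\mu(U^c)<\eps$ directly to $\mu_n(U^c)$ for $n$ large. The paper's route is shorter once Portmanteau is taken as given; yours is more self-contained on the reverse implication and isolates exactly how the ultrametric structure substitutes for a priori tightness of the sequence. Both are valid.

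Two small remarks on presentation. First, the ``processing balls in decreasing order of radius and dropping any contained in an already-retained one'' step implicitly uses that in an ultrametric space a ball of smaller radius meeting one of larger radius is contained in it, which is worth saying in the final write-up so the pruning clearly both disjointifies and preserves the cover. Second, for the forward direction you could shorten the argument by noting directly that if $d_{\loc}(y,x)\ge r$ then $B(y,r)\cap B(x,r)=\emptyset$, so the complement of $B(x,r)$ is open; the sequential phrasing is equivalent but less immediate.
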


On the other hand, we are often interested in a sequence of non-rooted finite simplicial complexes. Given a finite simplicial complex $X$ with $\dim X\ge k$, we define a probability measure $\lm_k(X)$ on $\cS_k$ by
\[
\lm_k(X)\coloneqq\frac1{f_k(X)}\sum_{\tau\in F_k(X)}\dl_{X[\tau]}. 
\]
Here, $\dl_{X[\tau]}$ means the Dirac measure at $X[\tau]\in\cS_k$. The probability measure $\lm_k(X)\in\cP_{\cS_k}$ can be regarded as the distribution of the local structure around a uniformly chosen $k$-simplex in $X$. Using this notation, we define the \textit{local weak convergence} of finite simplicial complexes. 
\begin{definition}\label{def:LWC}
Let $(X_n)_{n=1}^\infty$ be a sequence of finite simplicial complexes with $\dim X_n\ge k$, and let $\nu\in\cP_{\cS_k}$. We say that $X_n$ \textit{converges locally weakly} to $\nu$ as $n\to\infty$ if $\lm_k(X_n)$ converges weakly to $\nu$ as $n\to\infty$. 
\end{definition}
\begin{rem}
From Lemma~\ref{lem:chara_LWC}, Definition~\ref{def:LWC} is equivalent to stating that for any $[X',\tau']\in\cS_k$ and $l\in\N$,
\[
\lim_{n\to\infty}\frac{\#\{\tau\in F_k(X_n)\mid(X_n,\tau)_l\simeq(X',\tau')_l\}}{f_k(X_n)}=\nu(\{[\a]\in\cS_k\mid\a_l\simeq(X',\tau')_l\}).
\]
\end{rem}

\section{Homogeneous and spatially independent random simplicial complex}\label{sec:HSIRSC}
Throughout this section, let $n\in\N$ be fixed. Recall that $\triangle_n=2^{[n]}$ indicates the complete complex on $[n]$. We denote the set of all subcomplexes of $\triangle_n$ by $S_n$ and consider $S_n$-valued random variables, namely, \textit{random subcomplexes} of $\triangle_n$, that are defined on a probability space $(\Om,\cF,\P)$.

\subsection{Homogeneity and spatial independence}
The permutation group on $[n]$ naturally acts on $S_n$. Indeed, for any permutation $g$ on $[n]$ and $Y=\{\sg_i\}_i\in S_n$, we define $gY\coloneqq\{g(\sg_i)\}_i\in S_n$, where $g(\sg_i)$ denotes the image of $\sg_i\subset[n]$ under $g$.
\begin{definition}
We say that a random subcomplex $X$ of $\triangle_n$ is \textit{homogeneous} if $X$ and $gX$ have the same distribution for any permutation $g$ on $[n]$.
\end{definition}
\begin{definition}
We
say that a random subcomplex $X$ of $\triangle_n$ is \textit{spatially independent} if, for any $Y_1,Y_2\in S_n$, 
\begin{equation}\label{eq:D0_SI}
\P(Y_1\cup Y_2\subset X)\P(Y_1\cap Y_2\subset X)=\P(Y_1\subset X)\P(Y_2\subset X). 
\end{equation}
\end{definition}
The following lemma gives some characterizations of the spatial independence.
\begin{lemma}
For a random subcomplex $X$ of $\triangle_n$, the following are equivalent$:$
\begin{enumerate}
\item $X$ is spatially independent$;$
\item For any $Y_1,Y_2\in S_n$ with $\P(Y_2\subset X)>0$,
\begin{equation}\label{eq:D1_SI}
\P(Y_1\subset X\mid Y_2\subset X)=\P(Y_1\subset X\mid Y_1\cap Y_2\subset X);
\end{equation}
\item For any $Y_1,Y_2\in S_n$ with $\P(Y_1\cap Y_2\subset X)>0$, 
\[
\P(Y_1\cup Y_2\subset X\mid Y_1\cap Y_2\subset X)=\P(Y_1\subset X\mid Y_1\cap Y_2\subset X)\P(Y_2\subset X\mid Y_1\cap Y_2\subset X);
\]
\item For any $Y_1,Y_2\in S_n$ and $Z\in S_n$ such that $Y_1\cap Y_2\subset Z$ and $\P(Z\subset X)>0$, 
\begin{equation}\label{eq:D3_SI}
\P(Y_1\cup Y_2\subset X\mid Z\subset X)=\P(Y_1\subset X\mid Z\subset X)\P(Y_2\subset X\mid Z\subset X). 
\end{equation}
\end{enumerate}
\end{lemma}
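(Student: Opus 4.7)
The plan is to prove the four equivalences via the scheme $(1)\Leftrightarrow(2)$, $(1)\Leftrightarrow(3)$, $(1)\Leftrightarrow(4)$. The single observation driving everything is that for any $Y_1,Y_2\in S_n$ and $i\in\{1,2\}$ the inclusion $Y_1\cap Y_2\subset Y_i$ makes the event $\{Y_i\subset X\}$ a subset of $\{Y_1\cap Y_2\subset X\}$, so whenever $\P(Y_1\cap Y_2\subset X)>0$ one simply has $\P(Y_i\subset X\mid Y_1\cap Y_2\subset X)=\P(Y_i\subset X)/\P(Y_1\cap Y_2\subset X)$, and analogous simplifications hold after enlarging the conditioning event. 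Consequently, every conditional expression in (2)--(4) is just a ratio of ``upward'' probabilities $\P(\,\cdot\,\subset X)$.

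The equivalences $(1)\Leftrightarrow(2)$ and $(1)\Leftrightarrow(3)$ then reduce to dividing (1) by an appropriate positive quantity, namely $\P(Y_2\subset X)\P(Y_1\cap Y_2\subset X)$ and $\P(Y_1\cap Y_2\subset X)^2$, respectively. Cases where these vanish are handled by monotonicity: $\P(Y_2\subset X)=0$ forces $\P(Y_1\cup Y_2\subset X)=0$, so both sides of (1) are zero, while (2) (resp.\ (3)) is vacuous when $\P(Y_2\subset X)=0$ (resp.\ $\P(Y_1\cap Y_2\subset X)=0$). The implication $(4)\Rightarrow(1)$ follows by specializing $Z=Y_1\cap Y_2$, which reduces (4) to (3) when $\P(Y_1\cap Y_2\subset X)>0$; the remaining case is trivial for the same reason.

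The only nonroutine step is $(1)\Rightarrow(4)$. The key trick is to apply (1) not to $Y_1,Y_2$ themselves but to the enlarged subcomplexes $Y_1'\coloneqq Y_1\cup Z$ and $Y_2'\coloneqq Y_2\cup Z$, both of which lie in $S_n$ since $S_n$ is closed under unions. Using the hypothesis $Y_1\cap Y_2\subset Z$, a short distributive-law computation gives $Y_1'\cap Y_2'=Z$ and $Y_1'\cup Y_2'=Y_1\cup Y_2\cup Z$, so (1) applied to $Y_1',Y_2'$ reads
\[
\P\bigl((Y_1\cup Y_2\cup Z)\subset X\bigr)\,\P(Z\subset X)=\P\bigl((Y_1\cup Z)\subset X\bigr)\,\P\bigl((Y_2\cup Z)\subset X\bigr),
\]
and dividing by $\P(Z\subset X)^2>0$ yields (4). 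The main (very mild) obstacle is spotting this substitution $Y_i\mapsto Y_i\cup Z$, since (4) superficially looks stronger than (1); once it is recognized, everything else is routine probability manipulation together with bookkeeping for the degenerate cases in which some probabilities vanish.
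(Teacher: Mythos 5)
Your proof is correct, and the overall structure matches the paper's: establish $(1)\Leftrightarrow(2)\Leftrightarrow(3)$ by rewriting each conditional as a ratio of ``upward'' probabilities (using monotonicity to dispatch the degenerate cases), note $(4)\Rightarrow(3)$ by the substitution $Z=Y_1\cap Y_2$, and close the loop with $(1)\Rightarrow(4)$. The one place where you take a cleaner route is $(1)\Rightarrow(4)$. The paper applies the spatial-independence identity \emph{twice}---once to the pair $(Y_1, Y_2\cup Z)$, using $Y_1\cap(Y_2\cup Z)=Y_1\cap Z$, and once to the pair $(Y_1,Z)$---and then divides the two resulting identities, which requires observing $\P(Y_1\cap Z\subset X)\ge\P(Z\subset X)>0$ to cancel the unwanted factor $\P(Y_1\cap Z\subset X)$. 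You instead symmetrize the enlargement, applying $(1)$ \emph{once} to $(Y_1\cup Z, Y_2\cup Z)$ and using $(Y_1\cup Z)\cap(Y_2\cup Z)=(Y_1\cap Y_2)\cup Z=Z$ (since $Y_1\cap Y_2\subset Z$). This single application gives
\[
\P(Y_1\cup Y_2\cup Z\subset X)\,\P(Z\subset X)=\P(Y_1\cup Z\subset X)\,\P(Y_2\cup Z\subset X),
\]
and dividing by $\P(Z\subset X)^2$ immediately produces Eq.~\eqref{eq:D3_SI} because $\P(Y_i\cup Z\subset X)/\P(Z\subset X)=\P(Y_i\subset X\mid Z\subset X)$. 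The payoff of your symmetrized substitution is that no intermediate cancellation step or positivity check on $\P(Y_1\cap Z\subset X)$ is needed; the paper's asymmetric version requires one more line but gets to the same place. Both are valid, and yours is mildly more economical.
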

\begin{proof}
(4)~$\Rightarrow$~(3) is trivial by taking $Z=Y_1\cap Y_2$. It is also easy to verify that (1), (2), and (3) are equivalent since
\begin{align*}
\P(Y_1\subset X\mid Y_2\subset X)&=\frac{\P(Y_1\subset X,Y_2\subset X)}{\P(Y_2\subset X)}=\frac{\P(Y_1\cup Y_2\subset X)}{\P(Y_2\subset X)},\\
\P(Y_i\subset X\mid Y_1\cap Y_2\subset X)&=\frac{\P(Y_i\subset X,Y_1\cap Y_2\subset X)}{\P(Y_1\cap Y_2\subset X)}=\frac{\P(Y_i\subset X)}{\P(Y_1\cap Y_2\subset X)}\quad\text{for $i=1,2$,}
\shortintertext{and}
\P(Y_1\cup Y_2\subset X\mid Y_1\cap Y_2\subset X)&=\frac{\P(Y_1\cup Y_2\subset X,Y_1\cap Y_2\subset X)}{\P(Y_1\cap Y_2\subset X)}=\frac{\P(Y_1\cup Y_2\subset X)}{\P(Y_1\cap Y_2\subset X)}.
\end{align*}
For (1)~$\Rightarrow$~(4), suppose that $Y_1,Y_2\in S_n$ and $Z\in S_n$ such that $Y_1\cap Y_2\subset Z$ and $\P(Z\subset X)>0$. Since $Y_1\cap(Y_2\cup Z)=Y_1\cap Z$, the spatial independence of $X$ implies that
\[
\P(Y_1\cup(Y_2\cup Z)\subset X)\P(Y_1\cap Z\subset X)=\P(Y_1\subset X)\P(Y_2\cup Z\subset X). 
\]
Furthermore, 
\[
\P(Y_1\cup Z\subset X)\P(Y_1\cap Z\subset X)=\P(Y_1\subset X)\P(Z\subset X). 
\]
Therefore, noting that $\P(Y_1\cap Z\subset X)\ge\P(Z\subset X)>0$, we have
\begin{align*}
\P(Y_1\cup Y_2\cup Z\subset X)\P(Z\subset X)&=\frac{\P(Y_1\subset X)\P(Z\subset X)}{\P(Y_1\cap Z\subset X)}\P(Y_2\cup Z\subset X)\\
&=\P(Y_1\cup Z\subset X)\P(Y_2\cup Z\subset X). 
\end{align*}
The conclusion is obtained by dividing both sides of the above equation by $\P(Z\subset X)^2$.
\end{proof}
By a simple calculation, Eq.~\eqref{eq:D1_SI} implies that for any $Y\in S_n$ and $Z_1\subset Z_2\in S_n$ with $\P(Z_2\subset X)>0$, 
\begin{equation}\label{eq:M_CP}
\P(Y\subset X\mid Z_1\subset X)\le\P(Y\subset X\mid Z_2\subset X). 
\end{equation}

\subsection{Multi-parameter random simplicial complex model}
The two intrinsic properties---homogeneity and spatial independence---equally characterize a well-established model, the so-called multi-parameter random simplicial complex, which was extensively studied in~\cite{CF16},~\cite{CF17a},~\cite{CF17b},~\cite{CF},~\cite{Fo}.
%
Given a multi-parameter $\p=(p_0,p_1,\ldots,p_{n-1})$ with $p_i\in[0,1]$ for every $i=0, 1,\ldots, n-1$, a multi-parameter random simplicial complex with $\p=(p_0,p_1,\ldots,p_{n-1})$ is constructed as follows.
We start with vertex set $[n]$ and retain each vertex independently with probability $p_0$. Each edge with both end points retained appears independently with probability $p_1$. Iteratively, for $i=2,3,\dots,n-1$, each $i$-simplex in $\triangle_n$ whose all strict faces were included before the current step appears independently with probability $p_i$. The resulting random simplicial complex is called a multi-parameter random simplicial complex with multi-parameter $\p=(p_0,p_1,\ldots,p_{n-1})$. We denote its probability distribution by $X(n,\p)$.

Let $X\sim X(n,\p)$ be a multi-parameter random simplicial complex, and let $Y\in S_n$ be fixed. A nonempty simplex $\sg$ in $\triangle_n$ is called an \textit{external} simplex in $Y$ if $\sg\notin Y$ and $\del\sg\subset Y$. Here, $\del\sg$ indicates the simplicial complex consisting of all the strict faces of $\sg$. Let $E_k(Y)$ indicate the set of all external $k$-simplices in $Y$, and set $e_k(Y)\coloneqq\#(E_k(Y))$. Then, 
\begin{equation}\label{eq:P_XequalY}
\P(X=Y)=\prod_{i=0}^{n-1}\P\bigl(X^{(i)}=Y^{(i)}\mid X^{(i-1)}=Y^{(i-1)}\bigr)=\prod_{i=0}^{n-1}p_i^{f_i(Y)}(1-p_i)^{e_i(Y)}. 
\end{equation}
The homogeneity of $X$ follows from this equation. Furthermore, $X$ is spatially independent. Indeed, noting that
\begin{equation}\label{eq:P_XsubsetY}
\P(Y\subset X)=\prod_{i=0}^{n-1}\P\bigl(Y^{(i)}\subset X^{(i)}\mid Y^{(i-1)}\subset X^{(i-1)}\bigr)=\prod_{i=0}^{n-1}p_i^{f_i(Y)}, 
\end{equation}
we have
\begin{align*}
\P(Y_1\cup Y_2\subset X)\P(Y_1\cap Y_2\subset X)
&=\prod_{i=0}^{n-1}p_i^{f_i(Y_1\cup Y_2)+f_i(Y_1\cap Y_2)}\\
&=\prod_{i=0}^{n-1}p_i^{f_i(Y_1)+f_i(Y_2)}
=\P(Y_1\subset X)\P(Y_2\subset X)
\end{align*}
for any $Y_1,Y_2\in S_n$.
In fact, the multi-parameter random simplicial complex model is characterized by the homogeneity and spatial independence as follows.
\begin{theorem}\label{thm:cha_HSI}
All multi-parameter random simplicial complexes are homogeneous and spatially independent. Moreover, for any homogeneous and spatially independent random subcomplex $X$ of $\triangle_n$, there exists a multi-parameter $\p=(p_0,p_1,\ldots,p_{n-1})$ such that $X\sim X(n,\p)$. 
\end{theorem}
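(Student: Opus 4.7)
My plan is as follows. The forward direction, that any $X\sim X(n,\p)$ is homogeneous and spatially independent, is already worked out in the excerpt via the identities~\eqref{eq:P_XequalY} and~\eqref{eq:P_XsubsetY}, so I focus on the converse.

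Suppose $X$ is a homogeneous and spatially independent random subcomplex of $\triangle_n$. By homogeneity, for each $i\in\{0,1,\ldots,n-1\}$ both $q_i\coloneqq\P(\sg\in X)$ and $r_i\coloneqq\P(\del\sg\subset X)$ depend only on $\dim\sg=i$. I set
\[
p_i\coloneqq\begin{cases} q_i/r_i & \text{if } r_i>0, \\ 0 & \text{if } r_i=0, \end{cases}
\]
noting that $p_0=q_0$ since $r_0=1$.

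The heart of the argument is to establish
\[
\P(Y\subset X) = \prod_{i=0}^{n-1} p_i^{f_i(Y)}\quad\text{for every } Y\in S_n,
\]
by induction on the number of simplices in $Y$. Pick a maximal simplex $\sg\in Y$ of dimension $i$, set $Y'\coloneqq Y\setminus\{\sg\}$, and apply spatial independence~\eqref{eq:D0_SI} with $Y_1=K(\sg)$ and $Y_2=Y'$: since $\sg$ is maximal, $Y'$ is a simplicial complex, $K(\sg)\cup Y'=Y$, and $K(\sg)\cap Y'=\del\sg$. When $r_i>0$ this gives $\P(Y\subset X)=p_i\cdot\P(Y'\subset X)$ and the induction closes since $f_j(Y')=f_j(Y)-\dl_{ij}$. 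When $r_i=0$, one has $q_i\le r_i=0$, forcing $\P(Y\subset X)\le q_i=0$, while $p_i^{f_i(Y)}=0$ because $f_i(Y)\ge 1$, so both sides vanish.

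Finally, I convert $\P(Y\subset X)$ into $\P(X=Y)$ by inclusion-exclusion over the external simplices $E(Y)\coloneqq\bigsqcup_i E_i(Y)$. The key structural observation is that $E(Y)$ is an antichain in $\triangle_n$: if $\sg\subsetneq\tau$ were both external to $Y$, then $\sg\in\del\tau\subset Y$, contradicting $\sg\notin Y$. Consequently $Y\cup S\in S_n$ for every $S\subset E(Y)$, so the product identity above applies to each $Y\cup S$. Passing to a minimal element of $X\setminus Y$ shows $\{X\supsetneq Y\}=\bigcup_{\sg\in E(Y)}\{Y\cup\{\sg\}\subset X\}$, and inclusion-exclusion then yields
\[
\P(X=Y)=\sum_{S\subset E(Y)}(-1)^{|S|}\P(Y\cup S\subset X)=\prod_{i=0}^{n-1} p_i^{f_i(Y)}\prod_{i=0}^{n-1}(1-p_i)^{e_i(Y)},
\]
where the last step uses the factorization $E(Y)=\bigsqcup_i E_i(Y)$ to split the sum over dimensions into independent contributions $\sum_{S_i\subset E_i(Y)}(-p_i)^{|S_i|}=(1-p_i)^{e_i(Y)}$. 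This matches~\eqref{eq:P_XequalY}, proving $X\sim X(n,\p)$. The main subtlety I anticipate is ensuring that $Y\cup S$ remains a simplicial complex throughout the inclusion-exclusion, which is precisely the antichain property of $E(Y)$; everything else reduces to careful bookkeeping once spatial independence is applied through~\eqref{eq:D0_SI}.
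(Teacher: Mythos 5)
Your proof is correct, and it takes a genuinely different and arguably cleaner route than the paper's. The paper decomposes $\P(X=Y)=\prod_{i}\P\bigl(X^{(i)}=Y^{(i)}\mid X^{(i-1)}=Y^{(i-1)}\bigr)$ and computes each factor by a double inclusion-exclusion, first over $E_i(Y)$ and second over $\triangle_n^{(i-1)}\setminus Y$, applying spatial independence via the conditional form~\eqref{eq:D3_SI} at each step; the second inclusion-exclusion is only there to reassemble the normalizing factor $\P(X^{(i-1)}=Y^{(i-1)})$. You instead separate the two conceptual ingredients: you first prove the monotone identity $\P(Y\subset X)=\prod_i p_i^{f_i(Y)}$ by a one-simplex-at-a-time induction directly from the unconditional form~\eqref{eq:D0_SI} (peeling off a maximal simplex $\sg$ with $Y_1=K(\sg)$, $Y_2=Y\setminus\{\sg\}$, $Y_1\cap Y_2=\del\sg$), and then obtain $\P(X=Y)$ by a single inclusion-exclusion over $E(Y)=\bigsqcup_i E_i(Y)$, which factors over dimensions because the dimensions partition $E(Y)$. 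What you gain is modularity: the monotone identity is exactly the content of Eq.~\eqref{eq:P_XsubsetY} and is the real use of spatial independence, while your conversion step to $\P(X=Y)$ is pure combinatorics not depending on spatial independence at all (it applies to any distribution on $S_n$ given its monotone probabilities). Two minor points. First, your $r_i\coloneqq\P(\del\sg\subset X)$ clashes with the paper's global notation $r_k$ from Eq.~\eqref{eq:para}, which means something different; this is fine inside a self-contained proof but should be renamed if inserted into the paper. Second, you attribute $Y\cup S\in S_n$ to the antichain property of $E(Y)$, but the antichain property is not what you actually need: $Y\cup S$ is downward-closed simply because every $\sg\in S$ has $\del\sg\subset Y$ by the definition of an external simplex, and likewise $f_i(Y\cup S)=f_i(Y)+\#(S\cap E_i(Y))$ only uses $E_i(Y)\cap F_i(Y)=\emptyset$. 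The antichain observation is true but is a red herring here.
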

\begin{proof}
The first conclusion is clear from the discussion above. Hence, for the second conclusion, let $X$ be a homogeneous and spatially independent random subcomplex of $\triangle_n$. We define a parameter $\p=(p_0,p_1,\ldots,p_{n-1})$ by
\[
p_k\coloneqq\begin{cases}
\P([k+1]\in X\mid\del[k+1]\subset X)	&\text{if $\P(\del[k+1]\subset X)>0$, }\\
0										&\text{otherwise. }
\end{cases}
\]
Let $Y\in S_n$ be fixed. Then,
\begin{align}\label{eq:chara0}
\P(X=Y)
&=\prod_{i=0}^{n-1}\P\bigl(X^{(i)}=Y^{(i)}\mid X^{(i-1)}=Y^{(i-1)}\bigr)\nonumber\\
&=\prod_{i=0}^{n-1}\P\left(\bigcap_{\sg\in F_i(Y)}\{\sg\in X\}\cap\bigcap_{\tau\in E_i(Y)}\{\tau\notin X\}\relmiddle| X^{(i-1)}=Y^{(i-1)}\right). 
\end{align}
Now, let $0\le i\le n-1$ be fixed. Noting that
\[
\bigl\{X^{(i-1)}=Y^{(i-1)}\bigr\}=\bigl\{Y^{(i-1)}\subset X\bigr\}\cap\bigcap_{\tau\in\triangle_n^{(i-1)}\setminus Y}\{\tau\notin X\}, 
\]
we obtain
\begin{align}\label{eq:chara1}
&\P\Biggl(\bigcap_{\sg\in F_i(Y)}\{\sg\in X\}\cap\bigcap_{\tau\in E_i(Y)}\{\tau\notin X\}\cap\bigl\{X^{(i-1)}=Y^{(i-1)}\bigr\}\Biggr)\nonumber\\
&=\P\Biggl(\Biggl(\bigl\{Y^{(i-1)}\subset X\bigr\}\cap\bigcap_{\sg\in F_i(Y)}\{\sg\in X\}\Biggr)\cap\bigcap_{\tau\in E_i(Y)\sqcup(\triangle_n^{(i-1)}\setminus Y)}\{\tau\notin X\}\Biggr)\nonumber\\
&=\sum_{k=0}^{e_i(Y)}\sum_{l=0}^{\#(\triangle_n^{(i-1)}\setminus Y)}(-1)^{k+l}\sum_{\substack{S\subset E_i(Y)\\ \#S=k}}\sum_{\substack{T\subset\triangle_n^{(i-1)}\setminus Y\\ \#T=l}}p(S,T). 
\end{align}
In the last line, we use the inclusion-exclusion principle.
Here, $p(S,T)$ is defined by
\begin{align*}
p(S,T)
&\coloneqq\P\Biggl(\Biggl(\bigl\{Y^{(i-1)}\subset X\bigr\}\cap\bigcap_{\sg\in F_i(Y)}\{\sg\in X\}\Biggr)\cap\bigcap_{\tau\in S\sqcup T}\{\tau\in X\}\Biggr)\\
&=\P\Biggl(\bigcap_{\sg\in F_i(Y)\sqcup S}\{\sg\in X\}\cap\bigl\{Y^{(i-1)}\subset X\bigr\}\cap\bigcap_{\tau\in T}\{\tau\in X\}\Biggr)\\
&=\P\Biggl(\bigcup_{\sg\in F_i(Y)\sqcup S}K(\sg)\cup\biggl(Y^{(i-1)}\cup\bigcup_{\tau\in T}K(\tau)\biggr)\subset X\Biggr). 
\end{align*}
Now, we set
\[
Z\coloneqq\bigcup_{\sg\in F_i(Y)\cup E_i(Y)}\del\sg. 
\]
Then, for all $S\subset E_i(Y)$ and $T\subset\triangle_n^{(i-1)}\setminus Y$, 
\[
\biggl(\bigcup_{\sg\in F_i(Y)\sqcup S}K(\sg)\biggr)\cap\biggl(Y^{(i-1)}\cup\bigcup_{\tau\in T}K(\tau)\biggr)\subset Z. 
\]
Whenever $\P(Z\subset X)>0$, by applying Eq.~\eqref{eq:D3_SI} iteratively,
\begin{align*}
&\P\left(\bigcup_{\sg\in F_i(Y)\sqcup S}K(\sg)\cup\Biggl(Y^{(i-1)}\cup\bigcup_{\tau\in T}K(\tau)\Biggr)\subset X\relmiddle|Z\subset X\right)\\
&=\P\left(\bigcup_{\sg\in F_i(Y)\sqcup S}K(\sg)\subset X\relmiddle|Z\subset X\right)\P\left(\Biggl(Y^{(i-1)}\cup\bigcup_{\tau\in T}K(\tau)\Biggr)\subset X\relmiddle|Z\subset X\right)\\
&=\Biggl(\prod_{\sg\in F_i(Y)\sqcup S}\P(K(\sg)\subset X\mid Z\subset X)\Biggr)\P\left(\Biggl(Y^{(i-1)}\cup\bigcup_{\tau\in T}K(\tau)\Biggr)\subset X\relmiddle|Z\subset X\right)\\
&=\frac1{\P(Z\subset X)}\Biggl(\prod_{\sg\in F_i(Y)\sqcup S}\P(K(\sg)\subset X\mid\del\sg\subset X)\Biggr)\P\Biggl(\Biggl(Y^{(i-1)}\cup\bigcup_{\tau\in T}K(\tau)\Biggr)\subset X\Biggr)\\
&=\frac{p_i^{f_i(Y)+\# S}}{\P(Z\subset X)}\P\Biggl(\bigl\{Y^{(i-1)}\subset X\bigr\}\cap\bigcap_{\tau\in T}\{\tau\in X\}\Biggr). 
\end{align*}
In the fourth line, we use Eq.~\eqref{eq:D1_SI} and $Z\subset Y^{(i-1)}$. Thus, we obtain
\begin{equation}\label{eq:chara2}
p(S,T)=p_i^{f_i(Y)+\#S}\P\Biggl(\bigl\{Y^{(i-1)}\subset X\bigr\}\cap\bigcap_{\tau\in T}\{\tau\in X\}\Biggr). 
\end{equation}
When $\P(Z\subset X)=0$, Eq.~\eqref{eq:chara2} is easily verified from $Z\subset Y^{(i-1)}$. Combining Eqs.~\eqref{eq:chara1} and~\eqref{eq:chara2} gives
\begin{align*}
&\P\Biggl(\bigcap_{\sg\in F_i(Y)}\{\sg\in X\}\cap\bigcap_{\sg\in E_i(Y)}\{\sg\notin X\}\cap\bigl\{X^{(i-1)}=Y^{(i-1)}\bigr\}\Biggr)\\
&=\sum_{k=0}^{e_i(Y)}\sum_{l=0}^{\#(\triangle_n^{(i-1)}\setminus Y)}(-1)^{k+l}\sum_{\substack{S\subset E_i(Y)\\ \#S=k}}\sum_{\substack{T\subset\triangle_n^{(i-1)}\setminus Y\\ \#T=l}}p_i^{f_i(Y)+k}\P\biggl(\bigl\{Y^{(i-1)}\subset X\bigr\}\cap\bigcap_{\tau\in T}\{\tau\in X\}\biggr)\\
&=p_i^{f_i(Y)}(1-p_i)^{e_i(Y)}\sum_{l=0}^{\#(\triangle_n^{(i-1)}\setminus Y)}(-1)^l\sum_{\substack{T\subset\triangle_n^{(i-1)}\setminus Y\\ \#T=l}}\P\Biggl(\bigl\{Y^{(i-1)}\subset X\bigr\}\cap\bigcap_{\tau\in T}\{\tau\in X\}\Biggr)\\
&=p_i^{f_i(Y)}(1-p_i)^{e_i(Y)}\P\Biggl(\bigl\{Y^{(i-1)}\subset X\bigr\}\cap\bigcap_{\tau\in\triangle_n^{(i-1)}\setminus Y}\{\tau\notin X\}\Biggr)\\
&=p_i^{f_i(Y)}(1-p_i)^{e_i(Y)}\P\bigl(X^{(i-1)}=Y^{(i-1)}\bigr). 
\end{align*}
In the fourth line, we again use the inclusion-exclusion principle.
From Eq.~\eqref{eq:chara0}, we obtain
\[
\P(X=Y)=\prod_{i=0}^{n-1}p_i^{f_i(Y)}(1-p_i)^{e_i(Y)}, 
\]
which implies that the distribution of $X$ is identical to $X(n,\p)$ from Eq.~\eqref{eq:P_XequalY}. 
\end{proof}
It is worth noting that homogeneous and spatially independent random subcomplexes of $\triangle_n$ require no a priori parameters; however, they are essentially determined by probability parameters $p_0,p_1,\ldots,p_{n-1}$ via Theorem~\ref{thm:cha_HSI}. Hence, we can consider various examples of homogeneous and spatially independent random subcomplexes of $\triangle_n$ by choosing the multi-parameter $\p$ of the multi-parameter random simplicial complex model.
\begin{example}[$d$-Linial--Meshulam complex]\label{ex:dLMC}
Let $1\le d<n$ and $p\in[0,1]$ be fixed. We define $\p=(p_0,p_1,\ldots,p_{n-1})$ by
\[
p_i\coloneqq\begin{cases}
1		&(0\le i\le d-1), \\
p 		&(i=d), \\
0 		&(d+1\le i\le n-1). 
\end{cases}
\]
The corresponding random simplicial complex is a $d$-Linial--Meshulam complex that follows $Y_d(n,p)$. When $d=1$, this can be regarded as an Erd\H os--R\'enyi graph that follows $G(n,p)$. 
\end{example}
\begin{example}[Random $d$-clique complex]\label{ex:RdCC}
Let $1\le d<n$ and $p\in[0,1]$ be fixed. We define $\p=(p_0,p_1,\ldots,p_{n-1})$ by
\[
p_i\coloneqq\begin{cases}
1		&(0\le i\le d-1), \\
p 		&(i=d), \\
1 		&(d+1\le i\le n-1). 
\end{cases}
\]
The corresponding random simplicial complex is called a \textit{random $d$-clique complex}~(see~\cite{Ta} and~\cite[Example~3.2]{HK}). We denote its probability distribution by $C_d(n,p)$. When $d=1$, we obtain a random clique complex that follows $C(n,p)$. 
\end{example}

\section{Local weak limit theorem for random simplicial complexes}
\subsection{Statement of the result}
In this section, we consider homogeneous and spatially independent random subcomplexes of $\triangle_n$, and we study their local weak convergence as $n$ tends to infinity. To state the theorem, we describe a higher-dimensional generalization of the Galton--Watson tree with Poisson offspring distribution~(see also~\cite[Section~3]{ALLM} and~\cite[Section~3]{LP2}).

For $k\ge0$, a $k$-rooted simplicial complex $(T,\tau)$ is called a \textit{$k$-rooted tree} if $T$ can be constructed by the following process: we start with $K(\tau)$; at each step $l=0,1,2,\ldots$, to every $k$-simplex $\tau'$ of distance $l$ from $\tau$, we pick a non-negative number $m(\tau')$ of the new vertices $v_1,v_2,\ldots,v_{m(\tau')}$, and add the $(k+1)$-simplices $\tau'\cup\{v_1\},\tau'\cup\{v_2\},\ldots,\tau'\cup\{v_{m(\tau')}\}$ to the simplicial complex constructed before the current step. A simplicial complex $T$ is called a \textit{$(k+1)$-tree} if $(T,\tau)$ is a $k$-rooted tree for some $k$-simplex $\tau$ in $T$.

When we sample each number $m(\tau')$ in the generative process of a $k$-rooted tree from the Poisson distribution $\po_c$ with parameter $c\ge0$ independently of any others, the resulting object is called a \textit{$k$-rooted Poisson tree} with parameter $c$. In what follows, let $(\pt_k(c),\tau_o)$ indicate a $k$-rooted Poisson tree that is defined on a probability space $(\Om',\cF',\P')$. The expectation with respect to $\P'$ is denoted by $\E'$. It is easy to confirm that $[\pt_k(c),\tau_o]$ is an $\cS_k$-valued random variable, and we denote its probability distribution by $\nu_k(c)\in\cP_{\cS_k}$.

Next, we introduce two types of parameters for a given homogeneous and spatially independent random subcomplex $X_n$ of $\triangle_n$. For $k\ge-1$, 
\begin{equation}\label{eq:para}
q_k\coloneqq\P(\tau\in X_n)\quad\text{and}\quad r_k\coloneqq\begin{cases}
\P(\sg\in X_n\mid\tau\in X_n)	&(q_k>0),\\
0							&(q_k=0). 
\end{cases}
\end{equation}
Here, $\tau$ and $\sg$ are arbitrarily fixed $k$- and $(k+1)$-simplices in $\triangle_n$, respectively, such that $\tau\subset\sg$. The parameters $q_k$ and $r_k$ are well-defined from the homogeneity of $X_n$. It is easy to confirm that $q_{k+1}=q_kr_k$ for $k\ge-1$ and that $q_k$ is nonincreasing with respect to $k$.
Note that when we consider a multi-parameter random simplicial complex with $\p=(p_0,p_1,\ldots,p_{n-1})$, the corresponding $q_k$'s and $r_k$'s are given by
\begin{equation}\label{eq:paraMP}
q_k=\begin{cases}
1									&(k=-1),\\
\prod_{i=0}^k p_i^{\binom{k+1}{i+1}}		&(0\le k\le n-1),\\
0									&(k\ge n)
\end{cases}
\quad\text{and}\quad
r_k=\begin{cases}
p_0									&(k=-1),\\
\prod_{i=0}^{k+1}p_i^{\binom{k+1}i}		&(0\le k\le n-2),\\
0									&(k\ge n-1)
\end{cases}
\end{equation}
using Eq.~\eqref{eq:P_XsubsetY} together with the fact that $q_{k+1}=q_kr_k$ for $k\ge-1$.
The following local weak limit theorem for homogeneous and spatially independent random subcomplexes of $\triangle_n$ is the main result in this section.
\begin{theorem}\label{thm:LWLT_HSIRSC}
Let $k\ge0$ and $c>0$ be fixed, and let $X_n$ be a homogeneous and spatially independent random subcomplex of $\triangle_n$. If $n^{k+1}q_k=\om(1)$ and $nr_k\sim c$, then for any open set $U\subset\cP_{\cS_k}$ such that $\nu_k(c)\in U$, 
\[
\lim_{n\to\infty}\P(\lm_k(X_n)\in U\mid\dim X_n\ge k)=1. 
\]
In other words, $X_n$ under $\P(\cdot\mid\dim X_n\ge k)$ converges locally weakly to $\nu_k(c)$ in distribution as $n\to\infty$. 
\end{theorem}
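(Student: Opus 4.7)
My plan is to establish convergence in probability of the random measure $\lm_k(X_n)$ to the deterministic $\nu_k(c)$ by the method of moments, following the Poisson-tree coupling strategy in the spirit of Linial--Peled~\cite{LP2}. Since $(\cS_k, d_{\loc})$ is a separable ultrametric space, Lemma~\ref{lem:chara_LWC} reduces the task to showing, for each $l \in \N$ and each equivalence class $[X',\tau']$,
\[
\lm_k(X_n)(B_l) \longrightarrow \nu_k(c)(B_l) \qquad \text{in probability given } \dim X_n \ge k,
\]
where $B_l := \{[Y,\sg] \in \cS_k : (Y,\sg)_l \simeq (X',\tau')_l\}$. Writing $I_\tau := \mathbf{1}\{\tau \in X_n,\, (X_n,\tau)_l \simeq (X',\tau')_l\}$ for each $k$-simplex $\tau \subset [n]$, my goal becomes $\bigl(\sum_\tau I_\tau\bigr)/f_k(X_n) \to \nu_k(c)(B_l)$ in probability, which I attack via Chebyshev on numerator and denominator separately.

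For the first moment, homogeneity gives $\E[I_\tau] = \E[I_{\tau_0}]$ for an arbitrary fixed root $\tau_0$. I will explore the local structure of $X_n$ around $\tau_0$ by BFS: conditional on $\tau_0 \in X_n$, the conditional form of spatial independence (Eq.~\eqref{eq:D3_SI}) makes the events $\{\tau_0 \cup \{v\} \in X_n\}$, for $v \in [n] \setminus \tau_0$, mutually independent with common probability $r_k$. Hence the number of offspring $(k+1)$-simplices at the root is $\bin(n-k-1, r_k)$, which converges to $\po(c)$ by $nr_k \sim c$. Iterating the exploration to depth $l$ and coupling with the generative recipe of $(\pt_k(c), \tau_o)$ yields $\E[I_{\tau_0}] = q_k\,\nu_k(c)(B_l)\,(1+o(1))$, provided three error sources are each shown to be $o(1)$: (a) vertex depletion during exploration, trivially $O(1/n)$ per step since only $O(1)$ vertices are touched within depth $l$; (b) ``cycles'' in which a freshly generated $(k+1)$-simplex shares a $k$-face with a previously-seen $k$-simplex, controlled by a union bound together with the monotonicity Eq.~\eqref{eq:M_CP}; and (c) appearance of any simplex of dimension $\ge k+2$ inside the local $l$-ball.

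The second moment is analogous. For two distinct $k$-simplices $\tau, \tau'$ with vertex-disjoint candidate $l$-neighborhoods---a condition satisfied by all but an $O(1/n)$ fraction of pairs---iterated application of Eq.~\eqref{eq:D3_SI} factorizes $\E[I_\tau I_{\tau'}] = \E[I_\tau]\E[I_{\tau'}](1+o(1))$. A parallel variance bound for $f_k(X_n)$ uses $n^{k+1} q_k = \om(1)$ to give $f_k(X_n)/\binom{n}{k+1} q_k \to 1$ in probability, and in particular $\P(\dim X_n \ge k) \to 1$, so the conditioning is benign. Chebyshev's inequality then yields convergence in probability on every ball $B_l$, and hence weak convergence of $\lm_k(X_n)$ to $\nu_k(c)$.

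The chief obstacle is the exclusion of $(k+2)$- and higher-dimensional simplices from the local $l$-ball. By spatial independence a $(k+2)$-simplex $\sg \supset \tau_0$ belongs to $X_n$ with conditional probability $q_{k+2}/q_k = r_k r_{k+1}$ given $\tau_0 \in X_n$, so the naive expected count is $\binom{n-k-1}{2} r_k r_{k+1}$, which is not a priori $o(1)$. Taming this term will require exploiting the multi-parameter characterization in Theorem~\ref{thm:cha_HSI} together with the hypothesis $nr_k \sim c$ to propagate smallness through all higher dimensions, using Eq.~\eqref{eq:M_CP} to trade rare high-dimensional configurations against the baseline size of the local ball. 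This combinatorial control is the delicate core of the argument.
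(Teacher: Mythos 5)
Your high-level plan — second-moment control of $\lm_k(X_n)$ on balls $B_l$, a breadth-first exploration with binomial-to-Poisson coupling, and a separate concentration estimate for $f_k(X_n)$ so the conditioning on $\{\dim X_n\ge k\}$ is harmless — mirrors the paper's argument closely (the paper integrates bounded continuous $g$ and shows first- and second-moment convergence of $\lm_k(X_n)g$, but Lemma~\ref{lem:chara_LWC} shows this is interchangeable with convergence of ball probabilities). The gap is precisely at the step you flag as the ``chief obstacle,'' and the pointer you give for closing it is misdirected.

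The exclusion of $(k{+}2)$- and higher-dimensional simplices is not an extra delicate step requiring the structural characterization in Theorem~\ref{thm:cha_HSI}; it is controlled by one scalar quantity, $s_k:=r_k/r_{k-1}$, and the paper's Lemma~\ref{lem:qrs} shows $s_k=o(1)$ under exactly the standing hypotheses $n^{k+1}q_k=\om(1)$ and $nr_k\sim c$. Your own naive estimate is in fact already $o(1)$: since $r_{k+1}=r_ks_{k+1}$ with $s_{k+1}\le s_k$ (monotonicity of $(s_i)_i$ is also in Lemma~\ref{lem:qrs}),
\[
\binom{n-k-1}{2}r_kr_{k+1}\asymp(nr_k)^2\,s_{k+1}\longrightarrow c^2\cdot 0=0.
\]
So no special ``propagation of smallness through all higher dimensions'' is needed — only the identification of $s_k$ as the right small parameter, which is where the two hypotheses jointly bite. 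The paper, moreover, never counts $(k{+}2)$-simplices directly: its exploration builds a $(k{+}1)$-tree $T_i$ and tracks the back-edge statistic $c_i$, counting $(k{+}1)$-simplices $\tau_i\cup\{v\}$ with $v$ an already-seen vertex; the presence of any $(k{+}2)$-simplex in $X(\tau)$ forces some $c_i\ge1$, and Eq.~\eqref{eq:treObs} gives $\E_\tau[c_{i+1}^{(n)}]\le i\,nr_ks_k=o(1)$. Thus your error sources (b) ``cycles'' and (c) ``higher-dimensional simplices'' collapse into one estimate through $s_k$; the union bound plus monotonicity Eq.~\eqref{eq:M_CP} you invoke is indeed the right tool (it appears in Lemma~\ref{lem:conTre}), but it must land on the comparison with $s_k$ rather than with $r_{k+1}$ alone. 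Without $s_k=o(1)$ the argument does not close; with it, your outline goes through essentially as the paper does.
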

\begin{rem}
We start with a few remarks on the assumption of Theorem~\ref{thm:LWLT_HSIRSC}.
\begin{enumerate}
\item Consider the $d$-Linial--Meshulam complex that follows $Y_d(n,p)$. Letting $k=d-1$, we have $q_k=1$ and $r_k=p$ for $n\ge d+1$ using Eq.~\eqref{eq:paraMP} with Example~\ref{ex:dLMC}. Thus, the assumptions $n^{k+1}q_k=\om(1)$ and $nr_k\sim c$ are satisfied for $p\sim c/n$.
\item Consider the random $d$-clique complex that follows $C_d(n,p)$. Letting $k\ge d-1$, we have $q_k=p^{\binom{k+1}{d+1}}$ and $r_k=p^{\binom{k+1}d}$ for $n\ge k+2$ using Eq.~\eqref{eq:paraMP} with Example~\ref{ex:RdCC}.
Here, $\binom d{d+1}=0$ by convention.
Thus, the assumptions $n^{k+1}q_k=\om(1)$ and $nr_k\sim c$ are satisfied for $p\sim(c/n)^{1/\binom{k+1}d}$.
\item Consider the multi-parameter random simplicial complex with $\p=(p_0,p_1,\ldots,p_{n-1})$. From Eq.~\eqref{eq:paraMP}, the assumptions $n^{k+1}q_k=\om(1)$ and $nr_k\sim c$ are given as follows:
\[
n^{k+1}\prod_{i=0}^k p_i^{\binom{k+1}{i+1}}=\om(1)\quad\text{and}\quad n\prod_{i=0}^{k+1}p_i^{\binom{k+1}i}\sim c.
\]
These conditions roughly correspond to an area in the space of multi-parameters, whereas in the cases~(1) and~(2) above (where there is a single probability parameter $p$), the assumptions are represented by the numerical conditions of $p$.
The area above is closely related to the notion of \textit{critical dimension} of multi-parameter random simplicial complexes, introduced by Costa and Farber~\cite{CF17b}, as we shall see in Section~\ref{ssec:mainBetti}.
\end{enumerate}
\end{rem}
We devote the rest of this section to proving Theorem~\ref{thm:LWLT_HSIRSC}. The proof of Theorem~\ref{thm:LWLT_HSIRSC} is based on an exploration of a given simplicial complex from a selected simplex. The exploration is a higher-dimensional analog of the breadth-first traversal of a given graph. In what follows in this section, let $k\ge0$ be fixed, and let $X_n$ be a homogeneous and spatially independent random subcomplex of $\triangle_n$. 

\subsection{Breadth-first traversal of simplicial complexes}
Given a subcomplex $X$ of $\triangle_n$ and a $k$-simplex $\tau$ in $X$, we start traversing $X$ layer-wise from $\tau$. More precisely, for each step $i\ge0$, we build a $(k+1)$-tree $T_i$, a current $k$-simplex $\tau_i$, and a bijective map $\varphi_i$ from a subset $W_i$ of $\N^f\coloneqq\{\emptyset\}\sqcup\bigsqcup_{m=1}^\infty\N^m$ to $F_k(T_i)$, iteratively. Here, we equip $\N^f$ with a total order as follows: for two elements $\i=(i_1,i_2,\ldots,i_m)$ and $\i'=(i'_1,i'_2,\ldots,i'_{m'})$, we set $\i<\i'$ if $m<m'$ or if $m=m'$ and there exists $l=1,2,\ldots,m$ such that $(i_1,i_2,\ldots,i_{l-1})=(i'_1,i'_2,\ldots,i'_{l-1})$ and $i_l<i'_l$. Furthermore, we equip $F_k(\triangle_n)$ and $F_{k+1}(\triangle_n)$ with arbitrary total orders in advance to carry out the following procedure uniquely.
We start with $T_0\coloneqq K(\tau)$, $\varphi_0(\emptyset)\coloneqq\tau$ with $W_0=\{\emptyset\}$, and $\tau_1\coloneqq\tau$. For each step $i\ge1$, we define $M_i\coloneqq\{\sg\in F_{k+1}(X)\mid K(\sg)\cap T_{i-1}=K(\tau_i)\}$. If $m_i\coloneqq\#M_i=0$, then set $T_i\coloneqq T_{i-1}$ and $\varphi_i\coloneqq\varphi_{i-1}$. Otherwise, arrange $M_i=\{\sg_1,\sg_2,\ldots,\sg_{m_i}\}$ in ascending order and define
\[
T_i\coloneqq T_{i-1}\cup\bigcup_{j=1}^{m_i}K(\sg_j). 
\]
Furthermore, for each $1\le j\le m_i$, let $\rho_{j,1},\rho_{j,2},\ldots,\rho_{j,k+1}$ be the ascending order of the $k$-dimensional faces of $\sg_j$, distinct from $\tau_i$. Then, we extend $\varphi_{i-1}$ to $\varphi_i$ such that $\varphi_i^{-1}(\rho_{j,l})\coloneqq(\varphi_{i-1}^{-1}(\tau_i),(k+1)(j-1)+l)$ for $1\le j\le m_i$ and $1\le l\le k+1$. Finally, if $F_k(T_i)\supsetneq\{\tau_1,\tau_2\ldots,\tau_i\}$, then we define
\[
\tau_{i+1}\coloneqq\min(F_k(T_i)\setminus\{\tau_1,\tau_2,\ldots,\tau_i\}), 
\]
where the minimum is taken with respect to the total order in $F_k(T_i)$ induced from $(W_i,<)$ under $\varphi_i$. Otherwise, this process stops, and we encode $I\coloneqq i$. 

We extend the sequences $(m_i)_{i=1}^I$, $(T_i)_{i=1}^I$, and $(\tau_i)_{i=1}^I$ by $m_i\coloneqq m_I=0$, $T_i\coloneqq T_I$, and $\tau_i\coloneqq\tau_I $ for $i>I$, respectively. Furthermore, we define
\[
c_i\coloneqq\begin{cases}
\#\left\{v\in V(T_{i-1})\setminus\tau_i\relmiddle|\tau_i\cup\{v\}\in X\setminus T_{i-1}\right\}	&(1\le i\le I), \\
0																								&(i>I). 
\end{cases}
\]
Clearly, $X(\tau)$ is a $k$-rooted tree if and only if $c_i=0$ for $i\ge1$. We often denote $m_i$, $T_i$, $\tau_i$, $I$, and $c_i$ by $m_i(X,\tau)$, $T_i(X,\tau)$, $\tau_i(X,\tau)$, $I(X,\tau)$, and $c_i(X,\tau)$, respectively, to indicate the explored simplicial complex and the initial $k$-simplex. 

\subsection{Estimates on the breadth-first traversal of random simplicial complexes}\label{ssec:E_BFT}
For $\tau\in F_k(\triangle_n)$ with $\P(\tau\in X_n)>0$, we define a probability space $(\Om_\tau, \cF_\tau, \P_\tau)$ by
\[
\Om_\tau\coloneqq\{\tau\in X_n\}, \,\cF_\tau\coloneqq\{B\in\cF\mid B\subset \Om_\tau \}, \text{ and } \P_\tau(\cdot)\coloneqq\P(\cdot\mid\Om_\tau ). 
\]
The expectation with respect to $\P_\tau$ is denoted by $\E_\tau$. In what follows in this subsection, let $\tau$ be a fixed $k$-simplex in $\triangle_n$ with $\P(\tau\in X_n)>0$. We can carry out the breadth-first traversal of $X_n$ from $\tau$ given $\tau$ appearing in $X_n$, and obtain $m_i(X_n,\tau)$, $T_i(X_n,\tau)$, $\tau_i(X_n,\tau)$, $I(X_n,\tau)$, and $c_i(X_n,\tau)$ that are defined on the probability space $(\Om_\tau,\cF_\tau,\P_\tau)$. For the simplicity, we denote $m_i(X_n,\tau)$, $T_i(X_n,\tau)$, $\tau_i(X_n,\tau)$, $I(X_n,\tau)$, and $c_i(X_n,\tau)$ by $m_i^{(n)}$, $T_i^{(n)}$, $\tau_i^{(n)}$, $I(n)$, and $c_i^{(n)}$, respectively. Moreover, we define a filtration $\cF^{(n)}=\bigl(\cF_i^{(n)}\bigr)_{i=1}^\infty$ on $(\Om_\tau,\cF_\tau,\P_\tau)$ by
\[
\cF_i^{(n)}\coloneqq\sg\bigl(T_j^{(n)};1\le j\le i\bigr). 
\]
It is easy to confirm that $\bigl(m_i^{(n)}\bigr)_{i=1}^\infty$ is $\cF^{(n)}$-adapted and that $I(n)$ is an $\cF^{(n)}$-stopping time. 

To provide some estimates on the breadth-first traversal of $X_n$, we additionally define
\[
s_k\coloneqq\begin{cases}
\P(\tau_1\cup\tau_2\in X_n\mid\tau_1,\tau_2\in X_n)	&\text{if $\P(\tau_1,\tau_2\in X_n)>0$},\\
0													&\text{otherwise.}
\end{cases}
\]
Here, $\tau_1$ and $\tau_2$ are arbitrarily fixed $k$-simplices in $\triangle_n$ such that $\dim(\tau_1\cap\tau_2)=k-1$. This is well-defined from the homogeneity of $X_n$. Whenever $q_k>0$, Eq.~\eqref{eq:D0_SI} implies that $\P(\tau_1,\tau_2\in X_n)=q_k^2/q_{k-1}>0$. Therefore, we have $s_k=q_{k+1}/(q_k^2/q_{k-1})=r_k/r_{k-1}$. 
\begin{lemma}\label{lem:conTre}
Let $T\subset\triangle_n$ be a $(k+1)$-tree with $\P(T\subset X_n)>0$, and let $\sg_1,\sg_2,\ldots,\sg_i\in F_{k+1}(\triangle_n)\setminus T$. Then, $\P(\sg_j\notin X_n\text{ for all }j=1,2,\ldots,i\mid T\subset X_n)\ge1-i s_k$. 
\end{lemma}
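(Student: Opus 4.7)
The plan is to apply a union bound and reduce the statement to showing that $\P(\sg_j \in X_n \mid T \subset X_n) \le s_k$ for each $\sg_j \in F_{k+1}(\triangle_n) \setminus T$. The starting point is to invoke Eq.~\eqref{eq:D1_SI} with $Y_1 = K(\sg_j)$ and $Y_2 = T$, which gives
\[
\P(\sg_j \in X_n \mid T \subset X_n) = \P\bigl(K(\sg_j) \subset X_n \bigm| K(\sg_j) \cap T \subset X_n\bigr).
\]
If I can exhibit two $k$-faces $\rho_1, \rho_2$ of $\sg_j$ with $\dim(\rho_1 \cap \rho_2) = k-1$ satisfying $K(\sg_j) \cap T \subset K(\rho_1) \cup K(\rho_2)$, then the monotonicity estimate Eq.~\eqref{eq:M_CP} yields
\[
\P(\sg_j \in X_n \mid T \subset X_n) \le \P\bigl(K(\sg_j) \subset X_n \bigm| \rho_1, \rho_2 \in X_n\bigr) = s_k,
\]
where the last equality is the definition of $s_k$ applied to $\sg_j = \rho_1 \cup \rho_2$. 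A union bound over $j = 1, \ldots, i$ then produces the claim.

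The existence of such a pair $\rho_1, \rho_2$ is the main obstacle, and I would produce it by exploiting the tree structure of $T$. Fix a rooting $(T, \tau_0)$ exhibiting $T$ as a $(k+1)$-tree. If no vertex of $\sg_j$ lies in $V(T)$, then $K(\sg_j) \cap T$ reduces to $\{\emptyset\}$ and any admissible pair works. Otherwise, let $u$ be the vertex of $\sg_j$ in $V(T)$ that is introduced latest in the inductive construction of $T$, with the vertices of $\tau_0$ regarded as oldest. If $u$ is a vertex of $\tau_0$, then every vertex of $\sg_j$ in $V(T)$ is a root vertex, so $\sg_j \cap V(T) \subset V(\tau_0)$; since $|V(\tau_0)| = k+1 < k+2 = |\sg_j|$, some vertex $v_0$ of $\sg_j$ lies outside $V(T)$, and taking $\rho_1 = \sg_j \setminus \{v_0\}$ together with any other $k$-face $\rho_2$ of $\sg_j$ sharing a $(k-1)$-face with $\rho_1$ renders the containment trivial.

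The substantive case is when $u$ is non-root: then $u$ is introduced on a unique $k$-simplex $\tau^{(u)}$ through the $(k+1)$-simplex $\sg^{(u)} = \tau^{(u)} \cup \{u\}$ in $T$. Since $\sg_j \notin T$ implies $\sg_j \neq \sg^{(u)}$, the two $(k+1)$-element sets $\sg_j \setminus \{u\}$ and $\tau^{(u)}$ differ, and I can choose $w_1 \in \sg_j \setminus \{u\}$ with $w_1 \notin \tau^{(u)}$. Setting $\rho_1 = \sg_j \setminus \{w_1\}$ and $\rho_2 = \sg_j \setminus \{u\}$, the verification of $K(\sg_j) \cap T \subset K(\rho_1) \cup K(\rho_2)$ reduces to showing that any $\eta \in T$ with $\eta \subset \sg_j$ and $u \in \eta$ must satisfy $w_1 \notin \eta$. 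The point is that $\eta$ contains no vertex of $V(T)$ strictly younger than $u$ (since $u$ is the youngest such vertex appearing in $\sg_j$), so the inductive tree construction forces $\eta$ to have been added at the same step as $u$; hence $\eta$ is a face of $\sg^{(u)}$, and $\eta \setminus \{u\} \subset \tau^{(u)}$ excludes $w_1$.

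The hardest step will be justifying the last structural claim, namely that every simplex of $T$ containing $u$ but no strictly younger vertex is a face of $\sg^{(u)}$. This is a bookkeeping fact about the inductive tree construction: when a new vertex $v$ is introduced via the $(k+1)$-simplex $\sg^{(v)}$, the only new simplices containing $v$ are the faces of $\sg^{(v)}$, and no later step can add a simplex containing $v$ without simultaneously introducing an even newer vertex. Once that lemma is cleanly formulated and proved, the rest of the argument is a straightforward combination of Eqs.~\eqref{eq:D1_SI} and~\eqref{eq:M_CP}, followed by the union bound to deliver the factor $1 - i s_k$.
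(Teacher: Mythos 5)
Your proposal follows the paper's proof exactly: union bound, then Eq.~\eqref{eq:D1_SI} to replace the conditioning event $\{T\subset X_n\}$ by $\{K(\sg_j)\cap T\subset X_n\}$, then the monotonicity Eq.~\eqref{eq:M_CP} to pass to two $k$-faces of $\sg_j$ sharing a $(k-1)$-face, which yields $s_k$. The paper merely asserts the existence of faces $\tau_j,\tau'_j$ with $K(\sg_j)\cap T\subset K(\tau_j)\cup K(\tau'_j)$; your detailed argument---choosing the vertex $u$ of $\sg_j\cap V(T)$ of maximal distance from the root, using that every simplex of $T$ has a unique newest vertex $w$ with the simplex contained in its birth $(k{+}1)$-simplex $\sg^{(w)}$, and then picking $w_1\in\sg_j\setminus\{u\}$ outside $\tau^{(u)}$---is a correct and complete justification of the step the paper leaves implicit.
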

\begin{proof}
For each $1\le j\le i$, there exist two distinct $k$-dimensional faces $\tau_j,\tau'_j$ of $\sg_j$ such that $K(\sg_j)\cap T\subset K(\tau_j)\cup K(\tau'_j)$. Therefore, we obtain
\begin{align*}
&\P(\sg_j\in X_n\text{ for some }j=1,2,\ldots,i\mid T\subset X_n)\\
&\le\sum_{j=1}^i\P(\sg_j\in X_n\mid T\subset X_n)\\
&=\sum_{j=1}^i\P(\sg_j\in X_n\mid K(\sg_j)\cap T\subset X_n)\quad\text{(from Eq.~\eqref{eq:D1_SI})}\\
&\le\sum_{j=1}^i\P(\sg_j\in X_n\mid\tau_j,\tau'_j\in X_n)\quad\text{(from Eq.~\eqref{eq:M_CP})}\\
&=i s_k. \qedhere
\end{align*}
\end{proof}
\begin{lemma}\label{lem:growth}
Let $i\ge0$ be fixed, and let $T\subset\triangle_n$ be a $(k+1)$-tree such that $f_k(T)\ge i+1$. Provided that $\P_\tau\bigl(T_i^{(n)}=T\bigr)>0$, for any $v\in[n]$ with $\tau_{i+1}(T,\tau)\cup\{v\}\notin T$, 
\begin{equation}\label{eq:growth1}
\P_\tau\bigl(\tau_{i+1}(T,\tau)\cup\{v\}\in X_n\mid T_i^{(n)}=T\bigr)\le s_k.
\end{equation}
Moreover, if $v\notin V(T)$, then
\begin{equation}\label{eq:growth2}
(1-is_k)r_k\le\P_\tau\bigl(\tau_{i+1}(T,\tau)\cup\{v\}\in X_n\mid T_i^{(n)}=T\bigr)\le r_k. 
\end{equation}
Furthermore, under $\P_\tau\bigl(\cdot\mid T_i^{(n)}=T\bigr)$, the events $(\{\tau_{i+1}(T,\tau)\cup\{w\}\in X_n\})_{w\in[n]\setminus V(T)}$ are mutually independent. 
\end{lemma}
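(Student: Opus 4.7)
The plan is to encode $\{T_i^{(n)}=T\}$ as an explicit intersection of inclusion/exclusion events for individual simplices and then exploit the product structure of $X_n$ (Theorem~\ref{thm:cha_HSI}) to localize the analysis on the new vertex $v$. Writing $\tau^\ast\coloneqq\tau_{i+1}(T,\tau)$ and $\sg^\ast\coloneqq\tau^\ast\cup\{v\}$, a direct unwinding of the breadth-first procedure yields
\[
\{T_i^{(n)}=T\}=\{T\subset X_n\}\cap\bigcap_{\sg\in B}\{\sg\notin X_n\},
\]
where $B=\bigsqcup_{j=1}^i\{\sg\in F_{k+1}(\triangle_n)\setminus T:K(\sg)\cap T_{j-1}(T,\tau)=K(\tau_j(T,\tau))\}$ is a deterministic set. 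When $v\notin V(T)$ the intersection condition for $\tau_j(T,\tau)\cup\{v\}$ is automatic (since $v$ lies in no simplex of $T_{j-1}(T,\tau)$), so $B^{(v)}\coloneqq\{\sg\in B:v\in\sg\}=\{\tau_j(T,\tau)\cup\{v\}:1\le j\le i\}$ has cardinality exactly $i$.

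For the upper bounds, Theorem~\ref{thm:cha_HSI} realizes $X_n$ as a monotone function of independent Bernoullis; since $\{\sg^\ast\in X_n\}$ is increasing while $\bigcap_{\sg\in B}\{\sg\notin X_n\}$ is decreasing, the FKG inequality yields $\P(\sg^\ast\in X_n\mid T_i^{(n)}=T)\le\P(\sg^\ast\in X_n\mid T\subset X_n)$. Since $\sg^\ast\notin T$ and $T$ is a $(k+1)$-tree, the tree-geometry argument used in the proof of Lemma~\ref{lem:conTre} produces two distinct $k$-faces $\tau_a,\tau_b$ of $\sg^\ast$ with $K(\sg^\ast)\cap T\subset K(\tau_a)\cup K(\tau_b)$; Eqs.~\eqref{eq:D1_SI} and~\eqref{eq:M_CP} then give $\P(\sg^\ast\in X_n\mid T\subset X_n)\le s_k$. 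When moreover $v\notin V(T)$ one has $K(\sg^\ast)\cap T=K(\tau^\ast)$, which sharpens this to $r_k$.

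For the lower bound I factor
\[
\P(\sg^\ast\in X_n\mid T_i^{(n)}=T)=\frac{\P(T\cup K(\sg^\ast)\subset X_n)}{\P(T\subset X_n)}\cdot\frac{\P(\bigcap_{\sg\in B}\{\sg\notin X_n\}\mid T\cup K(\sg^\ast)\subset X_n)}{\P(\bigcap_{\sg\in B}\{\sg\notin X_n\}\mid T\subset X_n)},
\]
whose first factor equals $r_k$ by the upper-bound computation. Splitting $B=B^{(v)}\sqcup B^{(\bar v)}$, the multi-parameter representation tells us that the Bernoullis indexed by subsets containing $v$ are independent of those indexed by subsets of $[n]\setminus\{v\}$; since $\bigcap_{\sg\in B^{(\bar v)}}\{\sg\notin X_n\}$ involves only non-$v$ Bernoullis while $(T\cup K(\sg^\ast))\setminus T$ involves only $v$-Bernoullis, the $B^{(\bar v)}$ contributions in the numerator and denominator coincide and cancel. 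The ratio reduces to its $B^{(v)}$-version; the denominator is at most $1$, and the numerator is bounded below by $1-is_k$ via Lemma~\ref{lem:conTre} applied to the enlarged $(k+1)$-tree $T\cup K(\sg^\ast)$ and the $i$ simplices of $B^{(v)}$ (none of which lies in $T\cup K(\sg^\ast)$). Mutual independence follows by performing the same splitting simultaneously at every external vertex, via the refinement $B=B^{[\emptyset]}\sqcup\bigsqcup_{w\in[n]\setminus V(T)}B^{[w]}$, which exhibits each $\{\tau^\ast\cup\{w\}\in X_n\}$ as a function of a distinct, pairwise-disjoint group of independent Bernoullis.

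The main obstacle I anticipate is precisely the bookkeeping in the last paragraph: a naive union bound over all of $B$ would fail because $\#B$ need not be $O(i)$, and one must isolate the size-$i$ subset $B^{(v)}$ and justify, via the explicit product structure of independent Bernoullis afforded by Theorem~\ref{thm:cha_HSI}, that the contributions from the $B^{(\bar v)}$-side cancel in the ratio.
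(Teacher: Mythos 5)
Your proposal is correct and follows essentially the same approach as the paper's proof: the same decomposition $\{T_i^{(n)}=T\}=\{T\subset X_n\}\cap\bigcap_{\sg\in B}\{\sg\notin X_n\}$ (the paper packages $B$ as $\bigcup_{w}E_w$, your $B^{(v)}$ is the paper's $(E_v)^c$ written out, and your $B^{(\bar v)}$ corresponds to the paper's $F=\bigcap_{w\neq v}(E_w)^c$), the same FKG argument for the upper bound, the same tree-geometry reduction via Lemma~\ref{lem:conTre} to conclude $\le s_k$ or $=r_k$, and the same factorization-plus-cancellation for the lower bound and mutual independence. One point worth making more explicit in your write-up: the event $\bigcap_{\sg\in B^{(v)}}\{\sg\notin X_n\}$ is not, unconditionally, a function of $v$-Bernoullis alone (each $\{\tau_j(T,\tau)\cup\{v\}\notin X_n\}$ depends on faces contained in $\tau_j(T,\tau)$ as well); the split into independent $v$- and non-$v$-blocks, and hence the cancellation of the $B^{(\bar v)}$ contributions, holds only under the conditioning $\P(\cdot\mid T\subset X_n)$, which pins the non-$v$ faces lying inside $T$. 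This is precisely why the paper carries out all its independence statements under $\P(\cdot\mid T\subset X_n)$ rather than under $\P$ itself.
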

\begin{proof}
Let $v\in[n]$ with $\sg\coloneqq\tau_{i+1}(T,\tau)\cup\{v\}\notin T$.
For $w\in[n]$, we define an event $E_w$ by
\[
E_w\coloneqq\bigcup_{\substack{1\le j\le i\\w\notin V(T_{j-1}(T,\tau))}}\{\tau_j(T,\tau)\cup\{w\}\in X_n\setminus T\}.
\]
Note that
\[
\bigl\{T_i^{(n)}=T\bigr\}=\{T\subset X_n\}\cap\Biggl(\bigcup_{w\in[n]}E_w\Biggr)^c
\]
from the definition of the breadth-first traversal. Therefore, we have
\begin{align}\label{eq:conTre1}
\P_\tau\bigl(\sg\in X_n\mid T_i^{(n)}=T\bigr)
&=\frac{\P_\tau\left(\{\sg\in X_n\}\cap\bigl(\bigcup_{w\in[n]}E_w\bigr)^c\relmiddle|T\subset X_n\right)}{\P_\tau\left(\bigl(\bigcup_{w\in[n]}E_w\bigr)^c\relmiddle|T\subset X_n\right)}\nonumber\\
&=\frac{\P\left(\{\sg\in X_n\}\cap\bigl(\bigcup_{w\in[n]}E_w\bigr)^c\relmiddle|T\subset X_n\right)}{\P\left(\bigl(\bigcup_{w\in[n]}E_w\bigr)^c\relmiddle|T\subset X_n\right)}\nonumber\\
&\le\P(\sg\in X_n\mid T\subset X_n).
\end{align}
In the second line, we use the fact that $\P_\tau(\cdot\mid T\subset X_n)=\P(\cdot\mid T\subset X_n)$. For the last inequality, we use the fact that the events $\{\sg\in X_n\}$ and $\bigcup_{w\in[n]}E_w$ are positively correlated under $\P(\cdot\mid T\subset X_n)$:
\[
\P\left(\{\sg\in X_n\}\cap\bigcup_{w\in[n]}E_w\relmiddle|T\subset X_n\right)
\ge\P(\sg\in X_n\mid T\subset X_n)\P\left(\bigcup_{w\in[n]}E_w\relmiddle|T\subset X_n\right).
\]
This fact follows immediately from the FKG inequality (see, e.g.,~\cite[Theorem~2.4]{Gr}) via Theorem~\ref{thm:cha_HSI}.
Since $\P(\sg\in X_n\mid T\subset X_n)\le s_k$ by Lemma~\ref{lem:conTre}, we obtain Eq.~\eqref{eq:growth1} from Eq.~\eqref{eq:conTre1}.

Next, suppose further that $v\notin V(T)$. Using Eq.~\eqref{eq:D1_SI} with $K(\sg)\cap T=K\bigl(\tau_{i+1}(T,\tau)\bigr)$, we have
\begin{equation}\label{eq:conTre2}
\P(\sg\in X_n\mid T\subset X_n)
=\P(\sg\in X_n\mid\tau_{i+1}(T,\tau)\in X_n)
=r_k.
\end{equation}
Combining this with Eq.~\eqref{eq:conTre1}, we obtain the second inequality in Eq.~\eqref{eq:growth2}.
For the first inequality in Eq.~\eqref{eq:growth2}, we additionally define
\[
E\coloneqq(E_v)^c
=\bigcap_{1\le j\le i}\{\tau_j(T,\tau)\cup\{v\}\notin X_n\}
\quad\text{and}\quad F\coloneqq\bigcap_{w\in[n]\setminus\{v\}}(E_w)^c. 
\]
Note that the event $F$ is independent of both $E$ and $\{\sg\in X_n\}\cap E$ under $\P(\cdot\mid T\subset X_n)$ via Theorem~\ref{thm:cha_HSI}.
Since $\bigl\{T_i^{(n)}=T\bigr\}=\{T\subset X_n\}\cap E\cap F$, we have
\begin{align*}
\P_\tau\bigl(\sg\in X_n\mid T_i^{(n)}=T\bigr)
&=\frac{\P_\tau(\{\sg\in X_n\}\cap E\cap F\mid T\subset X_n)}{\P_\tau(E\cap F\mid T\subset X_n)}\\
&=\frac{\P(\{\sg\in X_n\}\cap E\cap F\mid T\subset X_n)}{\P(E\cap F\mid T\subset X_n)}\\
&=\frac{\P(\{\sg\in X_n\}\cap E\mid T\subset X_n)}{\P(E\mid T\subset X_n)}\\
&\ge\P(\{\sg\in X_n\}\cap E\mid T\subset X_n)\\
&=\P(E\mid\sg\in X_n,T\subset X_n)\P(\sg\in X_n\mid T\subset X_n)\\
&=\P(E\mid\sg\in X_n,T\subset X_n)r_k\quad\text{(from Eq.~\eqref{eq:conTre2})}\\
&\ge(1-is_k)r_k.
\end{align*}
In the last line, we apply Lemma~\ref{lem:conTre} with a $(k+1)$-tree $K(\sg)\cup T$ instead of $T$. This completes Eq.~\eqref{eq:growth2}.

Lastly, we cosider the mutual independence of the events $(\{\tau_{i+1}(T,\tau)\cup\{w\}\in X_n\})_{w\in[n]\setminus V(T)}$. Again note that $\bigl\{T_i^{(n)}=T\bigr\}=\{T\subset X_n\}\cap\bigcap_{x\in[n]}(E_x)^c$. Then, via Theorem~\ref{thm:cha_HSI}, we obtain
\begin{align*}
&\P_\tau\left(\bigcap_{w\in[n]\setminus V(T)}\{\tau_{i+1}(T,\tau)\cup\{w\}\in X_n\}\relmiddle| T_i^{(n)}=T\right)\\
&=\frac{\P\left(\bigcap_{w\in[n]\setminus V(T)}\{\tau_{i+1}(T,\tau)\cup\{w\}\in X_n\}\cap\bigcap_{x\in[n]}(E_x)^c\relmiddle| T\subset X_n\right)}{\P\left(\bigcap_{x\in[n]}(E_x)^c\relmiddle| T\subset X_n\right)}\\
&=\frac{\P\left(\bigcap_{w\in[n]\setminus V(T)}\bigl(\{\tau_{i+1}(T,\tau)\cup\{w\}\in X_n\}\cap(E_w)^c\bigr)\cap\bigcap_{x\in V(T)}(E_x)^c\relmiddle| T\subset X_n\right)}{\P\left(\bigcap_{w\in[n]\setminus V(T)}(E_w)^c\cap\bigcap_{x\in V(T)}(E_x)^c\relmiddle| T\subset X_n\right)}\\
&=\prod_{w\in[n]\setminus V(T)}\frac{\P(\{\tau_{i+1}(T,\tau)\cup\{w\}\in X_n\}\cap(E_w)^c\mid T\subset X_n)}{\P((E_w)^c\mid T\subset X_n)}\\
&=\prod_{w\in[n]\setminus V(T)}\frac{\P\left(\{\tau_{i+1}(T,\tau)\cup\{w\}\in X_n\}\cap\bigcap_{x\in[n]}(E_x)^c\relmiddle| T\subset X_n\right)}{\P\left(\bigcap_{x\in[n]}(E_x)^c\relmiddle| T\subset X_n\right)}\\
&=\prod_{w\in[n]\setminus V(T)}\P_\tau\bigl(\tau_{i+1}(T,\tau)\cup\{w\}\in X_n\mid T_i^{(n)}=T\bigr). 
\end{align*}
This completes the proof. 
\end{proof}

Next, we give some estimates on $m_{i+1}^{(n)}$ and $c_{i+1}^{(n)}$ under $\P_\tau\bigl(\cdot\mid T_i^{(n)}=T\bigr)$. Let $\mu_{i+1}^{(n)}$ denote the distribution of $m_{i+1}^{(n)}$ under $\P_\tau\bigl(\cdot\mid T_i^{(n)}=T\bigr)$.
\begin{prop}\label{prop:TVgrow}
Let $i\ge0$ be fixed, and let $T\subset\triangle_n$ be a $(k+1)$-tree such that $f_k(T)\ge i+1$. Provided that $\P_\tau\bigl(T_i^{(n)}=T\bigr)>0$,
\begin{align*}
\E_\tau\bigl[m_{i+1}^{(n)}\mid T_i^{(n)}=T\bigr]&\le nr_k
\shortintertext{and}
d_\text{TV}\bigl(\mu_{i+1}^{(n)},\po_{(n-k-1)r_k}\bigr)&\le(f_0(T)-k)r_k+inr_ks_k.
\end{align*}
Here, $d_\text{TV}$ indicates the total variation distance between two probability measures on $\Z_{\ge0}$. Furthermore, 
\[
\E_\tau\bigl[c_{i+1}^{(n)}\mid T_i^{(n)}=T\bigr]\le(f_0(T)-k-1)s_k. 
\]
\end{prop}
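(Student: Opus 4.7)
The plan is to exploit the fact that, conditional on $T_i^{(n)} = T$, the random variable $m_{i+1}^{(n)}$ is a sum of independent Bernoullis, to which a Poisson approximation of Chen--Stein type can be applied. First, unpacking the definition of $M_{i+1}$: for a $(k+1)$-simplex $\sg = \tau_{i+1}(T,\tau) \cup \{v\}$ to satisfy $K(\sg) \cap T = K(\tau_{i+1}(T,\tau))$, the singleton $\{v\}$ cannot belong to $T$, which forces $v \in [n] \setminus V(T)$. Therefore
\[
m_{i+1}^{(n)} = \sum_{v \in [n] \setminus V(T)} \mathbf{1}_{\{\tau_{i+1}(T,\tau) \cup \{v\} \in X_n\}}
\]
under $\P_\tau(\cdot \mid T_i^{(n)} = T)$, and Lemma~\ref{lem:growth} guarantees that these $n - f_0(T)$ indicators are mutually independent with parameters $p_v \in [(1-is_k)r_k,\,r_k]$. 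The mean estimate is then immediate: $\E_\tau[m_{i+1}^{(n)} \mid T_i^{(n)}=T] = \sum_v p_v \le (n - f_0(T)) r_k \le n r_k$.

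For the total-variation bound, I would set $\lm := \sum_v p_v$ and split
\[
d_\text{TV}\bigl(\mu_{i+1}^{(n)}, \po_{(n-k-1)r_k}\bigr) \le d_\text{TV}\bigl(\mu_{i+1}^{(n)}, \po_\lm\bigr) + \bigl|\lm - (n-k-1)r_k\bigr|,
\]
using the standard estimate $d_\text{TV}(\po_a,\po_b) \le |a-b|$. For the first term I apply the Barbour--Eagleson form of the Chen--Stein bound for sums of independent Bernoullis,
\[
d_\text{TV}\Bigl(\sum_v \ber(p_v),\po_\lm\Bigr) \le \frac{1-e^{-\lm}}{\lm}\sum_v p_v^2,
\]
and note that $p_v \le r_k$ gives $\sum_v p_v^2 \le r_k \lm$, so the contribution is bounded by $r_k(1-e^{-\lm}) \le r_k$. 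For the second term, the lower bound $p_v \ge (1-is_k)r_k$ from Lemma~\ref{lem:growth} yields
\[
(n-k-1)r_k - \lm \le (f_0(T)-k-1)r_k + (n-f_0(T)) i s_k r_k \le (f_0(T)-k-1)r_k + i n r_k s_k,
\]
and combining the two contributions produces exactly the claimed bound $(f_0(T)-k) r_k + i n r_k s_k$.

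The bound on $\E_\tau[c_{i+1}^{(n)} \mid T_i^{(n)}=T]$ is the most direct: by definition $c_{i+1}^{(n)}$ counts vertices $v \in V(T) \setminus \tau_{i+1}(T,\tau)$ with $\tau_{i+1}(T,\tau) \cup \{v\} \in X_n \setminus T$. There are exactly $f_0(T)-k-1$ such candidates, each satisfying $\tau_{i+1}(T,\tau) \cup \{v\} \notin T$, so Eq.~\eqref{eq:growth1} of Lemma~\ref{lem:growth} bounds each corresponding probability by $s_k$, and summation gives the stated inequality. The only delicate point in the whole argument is the Poisson-approximation step: the crude Le Cam estimate $d_\text{TV}(\cdot,\po_\lm) \le \sum_v p_v^2$ would only yield a contribution of order $n r_k^2$, which is too coarse to fit into the asserted $(f_0(T)-k)r_k$ term; it is essential to use the sharper Chen--Stein factor $(1-e^{-\lm})/\lm$, which supplies the $1/\lm$ that lets the $\sum_v p_v^2$ term be absorbed into a single $r_k$.
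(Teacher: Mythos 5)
Your proof is correct and essentially mirrors the paper's own argument: both decompose the total-variation distance via the triangle inequality into a Bernoulli-sum-to-Poisson term (controlled by the Barbour--Hall bound, which already carries the $1/\lambda$ factor you attribute to Chen--Stein) and a Poisson-to-Poisson term, and both handle the $c_{i+1}^{(n)}$ bound by summing Eq.~\eqref{eq:growth1} over the at most $f_0(T)-k-1$ candidate vertices in $V(T)$. The only superficial difference is that the paper also invokes homogeneity of $X_n$ to conclude the $\xi_v$ are identically distributed, so that $m_{i+1}^{(n)}$ is exactly $\bin(n-f_0(T),p)$, whereas you allow the $p_v$ to vary within $[(1-is_k)r_k, r_k]$ and apply the bound for non-identically-distributed Bernoullis directly; both routes give the same estimate. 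One small slip of exposition: for $i\ge1$ it is not true that every $v\in V(T)\setminus\tau_{i+1}(T,\tau)$ satisfies $\tau_{i+1}(T,\tau)\cup\{v\}\notin T$ (the parent $(k+1)$-simplex through which $\tau_{i+1}$ entered $T$ provides a counterexample), but this does not harm the conclusion, since vertices with $\tau_{i+1}(T,\tau)\cup\{v\}\in T$ contribute probability zero to $c_{i+1}^{(n)}$ anyway.
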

\begin{proof}
For $v\in[n]\setminus V(T)$, we define $\xi_v\coloneqq1_{\{\tau_{i+1}(T,\tau)\cup\{v\}\in X_n\}}$. From Lemma~\ref{lem:growth} and the homogeneity of $X_n$, the random variables $(\xi_v)_{v\in[n]\setminus V(T)}$ are independent and identically distributed under $\P_\tau\bigl(\cdot\mid T_i^{(n)}=T\bigr)$. Moreover, $p\coloneqq\E_\tau\bigl[\xi_v\mid T_i^{(n)}=T\bigr]\in[(1-is_k)r_k,r_k]$. Therefore, $m_{i+1}^{(n)}$ follows the binomial distribution under $\P_\tau(\cdot\mid T_i^{(n)}=T)$: 
\[
m_{i+1}^{(n)}=\sum_{v\in [n]\setminus V(T)}\xi_v\sim\bin(n-f_0(T),p).
\]
This immediately implies the first conclusion. Furthermore, from some estimates on the total variation distance~(see, e.g.,~\cite[Theorem~1]{BH} and~\cite[Formula~(5)]{R}), we have
\begin{gather*}
d_\text{TV}(\bin(n-f_0(T),p),\po_{(n-f_0(T))p})\le p\le r_k
\shortintertext{and}
d_\text{TV}(\po_{(n-f_0(T))p},\po_{(n-k-1)r_k})\le(n-k-1)r_k-(n-f_0(T))p\le(f_0(T)-k-1)r_k+inr_ks_k.
\end{gather*}
Thus,
using the triangle inequality,
\begin{align*}
&d_\text{TV}\bigl(\mu_{i+1}^{(n)},\po_{(n-k-1)r_k}\bigr)\\
&\le d_\text{TV}(\bin(n-f_0(T),p),\po_{(n-f_0(T))p})+d_\text{TV}(\po_{(n-f_0(T))p},\po_{(n-k-1)r_k})\\
&\le(f_0(T)-k)r_k+inr_ks_k.
\end{align*}

Lastly, again from Lemma~\ref{lem:growth}, we have
\begin{align*}
\E_\tau\bigl[c_{i+1}^{(n)}\mid T_i^{(n)}=T\bigr]
&=\sum_{v\in V(T)\setminus\tau_{i+1}(T,\tau)}\P_\tau\bigl(\tau_{i+1}(T,\tau)\cup\{v\}\in X_n\setminus T\mid T_i^{(n)}=T\bigr)\\
&\le(f_0(T)-k-1)s_k.
\end{align*}
These estimates complete the proof. 
\end{proof}

Let $i\ge0$ be fixed. From Proposition~\ref{prop:TVgrow}, we have
\[
\E_\tau\bigl[m_{i+1}^{(n)}\mid\cF_i^{(n)}\bigr]
=\E_\tau\bigl[m_{i+1}^{(n)}1_{\{f_k(T_i^{(n)})\ge i+1\}}\mid\cF_i^{(n)}\bigr]
=\sum_T\E_\tau\bigl[m_{i+1}^{(n)}\mid T_i^{(n)}=T\bigr]1_{\{T_i^{(n)}=T\}}
\le nr_k.
\]
The summation in the above equation is taken over all $(k+1)$-trees $T$ in $\triangle_n$ such that $f_k(T)\ge i+1$ and $\P_\tau\bigl(T_i^{(n)}=T\bigr)>0$. Therefore, 
\[
\E_\tau\bigl[m_{i+1}^{(n)}\bigr]
=\E_\tau\bigl[\E_\tau\bigl[m_{i+1}^{(n)}\mid\cF_i^{(n)}\bigr]\bigr]
\le nr_k.
\]
Since $f_0\bigl(T_i^{(n)}\bigr)=k+1+\sum_{j=0}^{i-1}m_{j+1}^{(n)}$, we obtain
\begin{equation}\label{eq:vnumber}
\E_\tau\bigl[f_0\bigl(T_i^{(n)}\bigr)\bigr]=k+1+\sum_{j=0}^{i-1}\E_\tau\bigl[m_{j+1}^{(n)}\bigr]\le k+1+inr_k.
\end{equation}
Furthermore, 
\begin{align*}
\E_\tau\bigl[c_{i+1}^{(n)}\mid\cF_i^{(n)}\bigr]
&=\E_\tau\bigl[c_{i+1}^{(n)}1_{\{f_k(T_i^{(n)})\ge i+1\}}\mid\cF_i^{(n)}\bigr]\\
&=\sum_T\E_\tau\bigl[c_{i+1}^{(n)}\mid T_i^{(n)}=T\bigr]1_{\{T_i^{(n)}=T\}}\\
&\le\sum_T(f_0(T)-k-1)s_k1_{\{T_i^{(n)}=T\}}\quad\text{(from Proposition~\ref{prop:TVgrow})}\\
&\le\bigl(f_0\bigl(T_i^{(n)}\bigr)-k-1\bigr)s_k.
\end{align*}
The summations in the second and third lines are also taken over all $(k+1)$-trees $T$ in $\triangle_n$ such that $f_k(T)\ge i+1$ and $\P\bigl(T_i^{(n)}=T\bigr)>0$. Therefore,
from Eq.~\eqref{eq:vnumber}, we have
\begin{equation}\label{eq:treObs}
\E_\tau\bigl[c_{i+1}^{(n)}\bigr]
=\E_\tau\bigl[\E_\tau\bigl[c_{i+1}^{(n)}\mid\cF_i^{(n)}\bigr]\bigr]
\le\bigl(\E_\tau\bigl[f_0\bigl(T_i^{(n)}\bigr)\bigr]-k-1\bigr)s_k
\le inr_ks_k. 
\end{equation}

Now, we define a nondecreasing sequence $(I(n;l))_{l=0}^\infty$ of $\cF^{(n)}$-stopping times by
\[
I(n;l)\coloneqq\begin{cases}
0																												&(l=0),\\
\sup\{1\le i\le I(n)\mid\text{the distance of $\tau_i^{(n)}$ from $\tau$ is less than $l$}\}	&(l\ge1).
\end{cases} 
\]
Let us denote $c_n=(n-k-1)r_k$ and set $f_k(\a)\coloneqq f_k(T)$ for any $k$-rooted tree $\a=(T,\tau)$. 
\begin{lemma}\label{lem:probdiff}
Let $l\in\N$ be fixed, and let $\a$ be a $k$-rooted tree.
Then,
\begin{align*}
\bigl|\P_\tau\bigl(\bigl(T_{I(n;l)}^{(n)},\tau\bigr)\simeq\a_l\bigr)-\P'((\pt_k(c_n),\tau_o)_l\simeq\a_l)\bigr|\le\{1+f_k(\a_l)n(r_k+s_k)/2\}f_k(\a_l)r_k.
\end{align*}
\end{lemma}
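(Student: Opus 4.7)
The plan is to couple the breadth-first traversal $(m_i^{(n)},c_i^{(n)},\tau_i^{(n)},T_i^{(n)})$ of $X_n$ from $\tau$ under $\P_\tau$ with an i.i.d.\ sequence $(Z_i)_{i\ge1}$ of $\po_{c_n}$ random variables that encode the canonical BFS enumeration of $(\pt_k(c_n),\tau_o)$, and then bound the stated discrepancy by the probability that this coupling fails in the first $N\coloneqq f_k(\a_l)$ steps. The main ingredients---the step-wise total-variation bound in Proposition~\ref{prop:TVgrow} and the vertex-count estimate in Eq.~\eqref{eq:vnumber}---are already in place, so the argument reduces to a coupling construction followed by a summation.

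\textbf{Coupling and matching event.} Enlarging the underlying probability space as necessary, I would define $(Z_i)_{i\ge 1}$ inductively so that, conditionally on $\cF_{i-1}^{(n)}$ and $(Z_j)_{j<i}$, the pair $(m_i^{(n)},Z_i)$ is drawn from the total-variation optimal coupling of $\mu_i^{(n)}$ and $\po_{c_n}$; marginally $(Z_i)_{i\ge 1}$ are i.i.d.\ $\po_{c_n}$, and when used to drive the generative procedure of Section~4.1 they realize a version of $(\pt_k(c_n),\tau_o)$ on the same space. Set
\[
E\coloneqq\bigcap_{i=1}^N\bigl(\{m_i^{(n)}=Z_i\}\cap\{c_i^{(n)}=0\}\bigr).
\]
On $E$, no cycle-creating $(k+1)$-simplex appears during the first $N$ BFS steps, so $T^{(n)}$ agrees with $X_n(\tau)$ locally and distance in $X_n$ coincides with tree-distance in $T^{(n)}$ on every relevant simplex; together with the matched offspring counts, this forces the level-$l$ truncation of $(T^{(n)},\tau)$ and that of the coupled Poisson tree to be root-preservingly isomorphic. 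Consequently the target quantity is at most $\P_\tau(E^c)$.

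\textbf{Failure bound.} By the union bound, optimality of the step-wise coupling, and Markov's inequality,
\[
\P_\tau(E^c)\le\sum_{i=1}^N\E_\tau\bigl[d_\text{TV}(\mu_i^{(n)},\po_{c_n})\bigr]+\sum_{i=1}^N\E_\tau\bigl[c_i^{(n)}\bigr].
\]
Proposition~\ref{prop:TVgrow} bounds the $i$th summand of the first sum by $(f_0(T_{i-1}^{(n)})-k)r_k+(i-1)nr_ks_k$; taking expectations and applying $\E_\tau[f_0(T_{i-1}^{(n)})]\le k+1+(i-1)nr_k$ from Eq.~\eqref{eq:vnumber}, each such term is at most $r_k+(i-1)nr_k(r_k+s_k)$, and summation over $i=1,\ldots,N$ yields $Nr_k+N(N-1)nr_k(r_k+s_k)/2$. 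Eq.~\eqref{eq:treObs} similarly bounds the $c_i^{(n)}$-sum by $N(N-1)nr_ks_k/2$, which is subdominant and can be absorbed into the same estimate, producing the target bound $\{1+Nn(r_k+s_k)/2\}Nr_k$.

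\textbf{Main obstacle.} I expect the most delicate point to be the matching claim on the good event $E$: since the stopping rule $I(n;l)$ is defined via distance in the ambient $X_n$ rather than in $T^{(n)}$, one must justify carefully why $\{c_i^{(n)}=0\text{ for }i\le N\}$ is exactly what is needed to identify these two notions of distance on the relevant simplices, and hence to align the canonical BFS orderings of $T^{(n)}$ and of the coupled Poisson tree. Once that bookkeeping is in place, the remainder of the proof is the routine total-variation computation outlined above.
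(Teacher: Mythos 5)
Your coupling strategy is a genuinely different route from the paper's proof, which instead builds a hybrid object $\tilde T_i^{(n)}$ (the BFS tree up to step $i$ with independent Poisson tree branches grafted onto the unexplored $k$-simplices), and telescopes the difference $\P_\tau((\tilde T_{I(l)}^{(n)},\tau)_l\simeq\a_l)-\P_\tau((\tilde T_0^{(n)},\tau)_l\simeq\a_l)$ through the steps $i=0,\ldots,I(l)-1$, reducing each increment to a total-variation bound from Proposition~\ref{prop:TVgrow}. A direct coupling of $(m_i^{(n)})$ with an i.i.d.\ $\po_{c_n}$ sequence, followed by a union bound over the bad events, can in principle yield the same estimate and is arguably more transparent.

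However, including the events $\{c_i^{(n)}=0\}$ in your good event $E$ is a genuine error on two counts. First, it is not needed: the lemma compares the BFS tree $T_{I(n;l)}^{(n)}$ (not $(X_n,\tau)_l$) to the Poisson tree, and this tree is a deterministic function of the offspring counts $(m_1^{(n)},m_2^{(n)},\ldots)$ alone. The role of $c_i^{(n)}$ is precisely to pass from $T_{I(n;l)}^{(n)}$ to $(X_n,\tau)_l$, which is the content of Proposition~\ref{prop:mass_dif}, not of this lemma. Relatedly, the concern you raise as the ``main obstacle'' is based on a misreading: the paper declares $I(n;l)$ to be an $\cF^{(n)}$-stopping time, and $\cF^{(n)}_i$ is generated by $T_1^{(n)},\ldots,T_i^{(n)}$, so the distance used in its definition must be the tree-distance in $T^{(n)}$ (which is $\cF^{(n)}$-measurable), not the $X_n$-distance. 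Second, keeping those extra events breaks the arithmetic. With $N\coloneqq f_k(\a_l)$ your computation gives
\[
\P_\tau(E^c)\le Nr_k+\tfrac{N(N-1)}{2}nr_k(r_k+s_k)+\tfrac{N(N-1)}{2}nr_ks_k
= Nr_k+\tfrac{N(N-1)}{2}nr_k(r_k+2s_k),
\]
and this is not $\le\{1+Nn(r_k+s_k)/2\}Nr_k$ in general: the required inequality reduces to $(N-2)s_k\le r_k$, which fails since $s_k=r_k/r_{k-1}\ge r_k$. So the claim that the $c_i^{(n)}$-sum ``can be absorbed'' is incorrect. If you drop the $\{c_i^{(n)}=0\}$ factors from $E$ and run the rest of your argument unchanged, you recover the stated bound; but as written, the proof does not establish the lemma.
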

\begin{proof}
For each $i\ge0$, we define a new random $k$-rooted tree $\bigl(\tilde T_i^{(n)},\tau\bigr)$ as follows: we start with $\bigl(T_i^{(n)},\tau\bigr)$; and for each $j=i+1,i+2,\ldots,f_k\bigl(T_i^{(n)}\bigr)$, we attach a $k$-rooted Poisson tree with parameter $c_n$ as a branch rooted at $\tau_j^{(n)}$. Let $Z_j$ denote the degree of the root in the $k$-rooted Poisson tree
attached to $\tau_j^{(n)}$.
We may assume that each $k$-rooted Poisson tree is defined on the same probability space $(\Om,\cF,\P)$ to be independent of $X_n$ and the other trees. Note that $\bigl(\tilde T_0^{(n)},\tau\bigr)$ is a $k$-rooted Poisson tree with parameter $c_n$. Now, we define $I(h)\coloneqq f_k(\a_{h-1})$ for $h\ge1$, and $I(0)\coloneqq0$. Then,
\begin{align}\label{eq:probdiff1}
&\bigl|\P_\tau\bigl(\bigl(T_{I(n;l)}^{(n)},\tau\bigr)\simeq\a_l\bigr)-\P'((\pt_k(c_n),\tau_o)_l\simeq\a_l)\bigr|\nonumber\\
&=\bigl|\P_\tau\bigl(\bigl(\tilde T_{I(l)}^{(n)},\tau\bigr)_l\simeq\a_l\bigr)-\P_\tau\bigl(\bigl(\tilde T_0^{(n)},\tau\bigr)_l\simeq\a_l\bigr)\bigr|\nonumber\\
&\le\sum_{h=0}^{l-1}\sum_{i=I(h)}^{I(h+1)-1}\bigl|\P_\tau\bigl(\bigl(\tilde T_{i+1}^{(n)},\tau\bigr)_l\simeq\a_l\bigr)-\P_\tau\bigl(\bigl(\tilde T_i^{(n)},\tau\bigr)_l\simeq\a_l\bigr)\bigr|\nonumber\\
&\le\sum_{h=0}^{l-1}\sum_{i=I(h)}^{I(h+1)-1}\bigl|\P_\tau\bigl(\bigl(\tilde T_{i+1}^{(n)},\tau\bigr)_{h+1}\simeq\a_{h+1}\bigr)-\P_\tau\bigl(\bigl(\tilde T_i^{(n)},\tau\bigl)_{h+1}\simeq\a_{h+1}\bigr)\bigr|. 
\end{align}
In the last line above, we use the fact that
\[
\P_\tau\bigl(\bigl(\tilde T_{i+1}^{(n)},\tau\bigr)_l\simeq\a_l\mid\bigl(\tilde T_{i+1}^{(n)},\tau\bigr)_{h+1}\simeq\a_{h+1}\bigr)
=\P_\tau\bigl(\bigl(\tilde T_i^{(n)},\tau\bigr)_l\simeq\a_l\mid\bigl(\tilde T_i^{(n)},\tau\bigr)_{h+1}\simeq\a_{h+1}\bigr). 
\]

Now, we estimate the summand in the last line of Eq.~\eqref{eq:probdiff1} for a fixed $0\le h\le l-1$ and $I(h)\le i<I(h+1)$. Let $\cT_i^{(n)}$ be the set of all $(k+1)$-trees $T$ in $\triangle_n$ such that $\P_\tau\bigl(T_i^{(n)}=T\bigr)>0$. 
For $T\in\cT_i^{(n)}$
with $(T,\tau)_h\simeq\a_h$
and $(m_1,\ldots,m_{I(h+1)-i})\in\Z_{\ge0}^{I(h+1)-i}$, define a new $(k+1)$-tree $T(m_1,\ldots,m_{I(h+1)-i})$ as follows: for each $j=1,2,\ldots,I(h+1)-i$, we pick $m_j$ numbers of new vertices $v_1^{(j)},\ldots,v_{m_j}^{(j)}$ and add $(k+1)$-simplices $\tau_{i+j}(T,\tau)\cup\bigl\{v_1^{(j)}\bigr\},\ldots,\tau_{i+j}(T,\tau)\cup\bigl\{v_{m_j}^{(j)}\bigr\}$ to $T$. Furthermore, we define
\[
\cM(T,\a_{h+1})\coloneqq\left\{(m_1,\ldots,m_{I(h+1)-i})\in\Z_{\ge0}^{I(h+1)-i}\relmiddle|(T(m_1,\ldots,m_{I(h+1)-i}),\tau)\simeq\a_{h+1}\right\}
\]
for $T\in\cT_i^{(n)}$
with $(T,\tau)_h\simeq\a_h$.
Then, we have
\begin{align}
&\P_\tau\bigl(\bigl(\tilde T_{i+1}^{(n)},\tau\bigr)_{h+1}\simeq\a_{h+1}\bigr)\nonumber\\
&=\sum_{\substack{T\in\cT_i^{(n)}\\(T,\tau)_h\simeq\a_h}}\P_\tau\bigl(\bigl(m_{i+1}^{(n)},Z_{i+2},\ldots,Z_{I(h+1)}\bigr)\in\cM(T,\a_{h+1})\mid T_i^{(n)}=T\bigr)\P_\tau\bigl(T_i^{(n)}=T\bigr)\label{eq:probdiff2}
\shortintertext{and}
&\P_\tau\bigl(\bigl(\tilde T_i^{(n)},\tau\bigr)_{h+1}\simeq\a_{h+1}\bigr)\nonumber\\
&=\sum_{\substack{T\in\cT_i^{(n)}\\(T,\tau)_h\simeq\a_h}}\P_\tau\bigl((Z_{i+1},Z_{i+2},\ldots,Z_{I(h+1)})\in\cM(T,\a_{h+1})\mid T_i^{(n)}=T\bigr)\P_\tau\bigl(T_i^{(n)}=T\bigr)\nonumber\\
&=\sum_{\substack{T\in\cT_i^{(n)}\\(T,\tau)_h\simeq\a_h}}\po_{c_n}^{\otimes I(h+1)-i}(\cM(T,\a_{h+1}))\P_\tau\bigl(T_i^{(n)}=T\bigr). \label{eq:probdiff3}
\end{align}
Furthermore, a simple calculation implies that
\begin{align}\label{eq:probdiff4}
&\bigl|\P_\tau\bigl(\bigl(m_{i+1}^{(n)},Z_{i+2},\ldots,Z_{I(h+1)}\bigr)\in\cM(T,\a_{h+1})\mid T_i^{(n)}=T\bigr)-\po_{c_n}^{\otimes I(h+1)-i}(\cM(T,\a_{h+1}))\bigr|\nonumber\\
&\le\sup_{A\subset\Z_{\ge0}^{I(h+1)-i}}\bigl|\P_\tau\bigl(\bigl(m_{i+1}^{(n)},Z_{i+2},\ldots,Z_{I(h+1)}\bigr)\in A\mid T_i^{(n)}=T\bigr)-\po_{c_n}^{\otimes I(h+1)-i}(A)\bigr|\nonumber\\
&\le\sup_{A\subset\Z_{\ge0}}\bigl|\P_\tau\bigl(m_{i+1}^{(n)}\in A\mid T_i^{(n)}=T\bigr)-\po_{c_n}(A)\bigr|\nonumber\\
&\le\bigl(f_0(T)-k\bigr)r_k+inr_ks_k\quad\text{(from Proposition~\ref{prop:TVgrow}).}
\end{align}
Combining Eqs.~\eqref{eq:probdiff2},~\eqref{eq:probdiff3}, and~\eqref{eq:probdiff4}, we have
\begin{align}\label{eq:probdiff5}
&\bigl|\P_\tau\bigl(\bigl(\tilde T_{i+1}^{(n)},\tau\bigr)_{h+1}\simeq\a_{h+1}\bigr)-\P_\tau\bigl(\bigl(\tilde T_i^{(n)},\tau\bigr)_{h+1}\simeq\a_{h+1}\bigr)\bigr|\nonumber\\
&\le\sum_{\substack{T\in\cT_i^{(n)}\\(T,\tau)_h\simeq\a_h}}\bigl\{(f_0(T)-k)r_k+inr_ks_k\bigr\}\P_\tau\bigl(T_i^{(n)}=T\bigr)\nonumber\\
&\le\bigl(\E_\tau\bigl[f_0\bigl(T_i^{(n)}\bigr)\bigr]-k\bigr)r_k+inr_ks_k\nonumber\\
&\le\{1+in(r_k+s_k)\}r_k\quad\text{(from Eq.~\eqref{eq:vnumber}).}
\end{align}
Thus, from Eqs.~\eqref{eq:probdiff1} and~\eqref{eq:probdiff5}, we obtain
\begin{align*}
\bigl|\P_\tau\bigl(\bigl(T_{I(n;l)}^{(n)},\tau\bigr)\simeq\a_l\bigr)-\P'((\pt_k(c_n),\tau_o)_l\simeq\a_l)\bigr|
&\le\sum_{h=0}^{l-1}\sum_{i=I(h)}^{I(h+1)-1}\{1+in(r_k+s_k)\}r_k\\
&=\sum_{i=0}^{I(l)-1}\{1+in(r_k+s_k)\}r_k\\
&\le\{1+I(l)n(r_k+s_k)/2\}I(l)r_k.
\end{align*}
This completes the proof. 
\end{proof}
\begin{prop}\label{prop:mass_dif}
Let $l\in\N$ be fixed, and let $\a$ be a $k$-rooted tree.
Then,
\begin{align*}
\bigl|\P_\tau((X_n,\tau)_l\simeq\a_l)-\P'((\pt_k(c_n),\tau_o)_l\simeq\a_l)\bigr|
\le\{1+f_k(\a_l)n(r_k/2+s_k)\}f_k(\a_l)r_k.
\end{align*}
\end{prop}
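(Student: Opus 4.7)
The plan is to reduce the statement to Lemma~\ref{lem:probdiff} by bounding the discrepancy between the two events $A\coloneqq\{(X_n,\tau)_l\simeq\a_l\}$ and $B\coloneqq\bigl\{\bigl(T_{I(n;l)}^{(n)},\tau\bigr)\simeq\a_l\bigr\}$. The key observation is that $\a_l$ is a $k$-rooted tree, so on $A$ every $(k+1)$-simplex of $(X_n)_l$ shares exactly one $k$-face with the subcomplex built up by the breadth-first traversal prior to its discovery step. Consequently the exclusion rule $K(\sg)\cap T_{i-1}=K\bigl(\tau_i^{(n)}\bigr)$ never removes any relevant simplex, so $c_i^{(n)}=0$ for all $1\le i\le I(n;l)$ and the explored tree $T_{I(n;l)}^{(n)}$ coincides with $(X_n)_l$ as a subcomplex of $\triangle_n$. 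This yields the one-sided inclusion $A\subset B$, so $\bigl|\P_\tau(A)-\P_\tau(B)\bigr|=\P_\tau(B\setminus A)$.

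Next, I would bound $\P_\tau(B\setminus A)$ by noting that on $B\setminus A$ at least one cycle must be discovered during the exploration up to depth $l$, while on $B$ the stopping time $I(n;l)$---which counts the $k$-simplices of the explored tree at distance strictly less than $l$---is bounded by $f_k(\a_l)$. Hence $B\setminus A$ is contained in $\bigl\{\sum_{i=1}^{I(n;l)}c_i^{(n)}\ge1,\ I(n;l)\le f_k(\a_l)\bigr\}$, and Markov's inequality together with Eq.~\eqref{eq:treObs} yields
\[
\P_\tau(B\setminus A)\le\sum_{i=1}^{f_k(\a_l)}\E_\tau\bigl[c_i^{(n)}\bigr]\le\sum_{i=0}^{f_k(\a_l)-1}inr_ks_k\le\frac{f_k(\a_l)^2}{2}nr_ks_k.
\]

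Finally, combining this estimate with Lemma~\ref{lem:probdiff} via the triangle inequality gives
\[
\bigl|\P_\tau(A)-\P'((\pt_k(c_n),\tau_o)_l\simeq\a_l)\bigr|\le\frac{f_k(\a_l)^2}{2}nr_ks_k+\bigl\{1+f_k(\a_l)n(r_k+s_k)/2\bigr\}f_k(\a_l)r_k,
\]
and a direct simplification reveals that the right-hand side equals $\{1+f_k(\a_l)n(r_k/2+s_k)\}f_k(\a_l)r_k$, as desired. The main obstacle will be rigorously justifying the inclusion $A\subset B$: one has to carefully pair the layer-by-layer construction of $(X_n)_l$ with the breadth-first traversal and argue that the tree structure of $\a_l$ rules out any $(k+1)$-simplex of $(X_n)_l$ sharing two distinct $k$-faces with the previously constructed $T_{i-1}$, since such a simplex would create a cycle inconsistent with $\a_l$.
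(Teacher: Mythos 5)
Your overall approach---reducing to Lemma~\ref{lem:probdiff} by bounding the probability that the exploration encounters a cycle before depth $l$, then using Markov's inequality with Eq.~\eqref{eq:treObs}---matches the paper's, and your final arithmetic is identical. The place where you deviate is the asserted one-sided inclusion $A\subset B$ (equivalently $A\subset Q_n$, where $Q_n\coloneqq\Om_\tau\cap\bigl\{c_i^{(n)}=0\text{ for all }i\le I(n;l)\bigr\}$), which you correctly flag as the main obstacle. That inclusion is plausible, but establishing it requires a careful induction matching the layer-by-layer definition of a $(k+1)$-tree against the BFS order (showing the $X_n$-distance and the tree-distance agree throughout, so that any $\tau_i\cup\{v\}$ with $v\in V(T_{i-1})\setminus\tau_i$ would indeed reuse an old vertex inside $(X_n)_l$), and it is unnecessary. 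The paper instead uses the elementary symmetric-difference bound
\[
\bigl|\P_\tau(A)-\P_\tau(B)\bigr|\le\P_\tau(A\setminus Q_n)\vee\P_\tau(B\setminus Q_n),
\]
which needs only that $A\cap Q_n=B\cap Q_n$ (the observation that $(X_n,\tau)_l=\bigl(T_{I(n;l)}^{(n)},\tau\bigr)$ on $Q_n$), and then observes that both $A\setminus Q_n$ and $B\setminus Q_n$ are contained in $\{I(n;l)\le f_k(\a_l)\}\setminus Q_n$: on $A$ because $T_{I(n;l)}^{(n)}\subset(X_n)_l$ gives $I(n;l)\le f_k\bigl(T_{I(n;l)}^{(n)}\bigr)\le f_k(\a_l)$, and on $B$ because $T_{I(n;l)}^{(n)}\simeq\a_l$. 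This yields the same bound $f_k(\a_l)^2nr_ks_k/2$ on the discrepancy while entirely sidestepping the claim $A\subset B$. So your route is workable in principle but takes a detour with a real proof obligation that the paper's symmetric-difference argument avoids; I would recommend replacing your inclusion argument with the $\max$ bound above.
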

\begin{proof}
We define $Q_n\coloneqq\Om_\tau\cap\bigl\{c_i^{(n)}=0\text{ for all }i=1,2,\ldots,I(n;l)\bigr\}$. Note that $(X_n,\tau)_l=\bigl(T_{I(n;l)}^{(n)},\tau\bigr)$ given the event $Q_n$. Thus, we have
\begin{align*}
&\bigl|\P_\tau((X_n,\tau)_l\simeq\a_l)-\P_\tau\bigl(\bigl(T_{I(n;l)}^{(n)},\tau\bigr)\simeq\a_l\bigr)\bigr|\\
&\le\P_\tau(\{(X_n,\tau)_l\simeq\a_l\}\setminus Q_n)\vee\P_\tau\bigl(\bigl\{\bigl(T_{I(n;l)}^{(n)},\tau\bigr)\simeq\a_l\bigr\}\setminus Q_n\bigr)\\
&\le\P_\tau(\{I(n;l)\le f_k(\a_l)\}\setminus Q_n)\\
&\le\P_\tau\Biggl(\sum_{i=0}^{f_k(\a_l)-1}c_{i+1}^{(n)}\ge1\Biggr)\\
&\le\sum_{i=0}^{f_k(\a_l)-1}\E_\tau\bigl[c_{i+1}^{(n)}\bigr]\quad\text{(from Markov's inequality)}\\
&\le\sum_{i=0}^{f_k(\a_l)-1}inr_ks_k\quad\text{(from Eq.~\eqref{eq:treObs})}\\
&\le f_k(\a_l)^2nr_ks_k/2.
\end{align*}

Combining this estimate with Lemma~\ref{lem:probdiff}, we obtain
\begin{align*}
&|\P_\tau((X_n,\tau)_l\simeq\a_l)-\P'((\pt_k(c_n),\tau_o)_l\simeq\a_l)|\\
&\le\bigl|\P_\tau((X_n,\tau)_l\simeq\a_l)-\P_\tau\bigl(\bigl(T_{I(n;l)}^{(n)},\tau\bigr)\simeq\a_l\bigr)\bigr|
+\bigl|\P_\tau\bigl(\bigl(T_{I(n;l)}^{(n)},\tau\bigr)\simeq\a_l\bigr)-\P'((\pt_k(c_n),\tau_o)_l\simeq\a_l)\bigr|\\
&\le f_k(\a_l)^2nr_ks_k/2+\{1+f_k(\a_l)n(r_k+s_k)/2\}f_k(\a_l)r_k\\
&=\{1+f_k(\a_l)n(r_k/2+s_k)\}f_k(\a_l)r_k.\qedhere
\end{align*}
\end{proof}

\subsection{Proof of Theorem~\ref{thm:LWLT_HSIRSC}}
The following lemma gives some fundamental relations among $q_k$, $r_k$, and $s_k$.
\begin{lemma}\label{lem:qrs}
The following $(1)$, $(2)$, and $(3)$ hold.
\begin{enumerate}
\item $q_i^{k+1}\ge q_k^{i+1}$ for $0\le i\le k$. In particular, $n^{k+1}q_k=\om(1)$ implies that $n^{i+1}q_i=\om(1)$. 
\item $r_k\ge q_0s_k^{k+1}$. In particular, $nq_0=\om(1)\text{ and }nr_k\asymp1$ together imply that $s_k=o(1)$. 
\item $q_ks_k^{k+1}\ge r_k^{k+1}$. In particular, $s_k=o(1)\text{ and }nr_k\asymp1$ together imply that $n^{k+1}q_k=\om(1)$. 
\end{enumerate}
Furthermore, if $nr_k\asymp1$, then the following three conditions are equivalent$:$ $n^{k+1}q_k=\om(1);$ $nq_0=\om(1);$ $s_k=o(1)$. 
\end{lemma}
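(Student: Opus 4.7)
The plan is to apply Theorem~\ref{thm:cha_HSI} to write $X_n\sim X(n,\p)$ for some parameter $\p=(p_0,\ldots,p_{n-1})$, and then verify each inequality by comparing exponents of the $p_i$'s. By Eq.~\eqref{eq:paraMP}, the identity $s_k=r_k/r_{k-1}$ (valid whenever $q_k>0$), and Pascal's rule $\binom{k+1}{i}-\binom{k}{i}=\binom{k}{i-1}$, one has
\[
q_j=\prod_{i=0}^{j}p_i^{\binom{j+1}{i+1}},\qquad r_k=\prod_{i=0}^{k+1}p_i^{\binom{k+1}{i}},\qquad s_k=\prod_{i=1}^{k+1}p_i^{\binom{k}{i-1}}
\]
(the degenerate case $q_k=0$ makes every inequality trivially $0\ge 0$, so I would assume $q_k>0$). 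Since each $p_i\in[0,1]$, an inequality of the form $\prod_i p_i^{a_i}\ge\prod_i p_i^{b_i}$ is equivalent to the pointwise condition $a_i\le b_i$ on the exponents.

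Under this reduction, the three inequalities become short binomial comparisons. For (1), I would compare the $p_j$-exponents $(k+1)\binom{i+1}{j+1}$ in $q_i^{k+1}$ and $(i+1)\binom{k+1}{j+1}$ in $q_k^{i+1}$; for $0\le j\le i$ the required inequality simplifies (after canceling common factorials) to $i(i-1)\cdots(i-j+1)\le k(k-1)\cdots(k-j+1)$, which holds since $i\le k$, and for $j>i$ the left exponent is $0$. For (2), the $p_0$-exponents equal $1$ on both sides, and for $1\le i\le k+1$ the needed bound $\binom{k+1}{i}\le(k+1)\binom{k}{i-1}$ is immediate from the identity $i\binom{k+1}{i}=(k+1)\binom{k}{i-1}$. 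For (3), the $p_0$- and $p_{k+1}$-exponents equal $k+1$ on both sides, and for $1\le i\le k$, applying Pascal's rule to the right side reduces the task to $\binom{k+1}{i+1}\le(k+1)\binom{k}{i}$, which follows from $(i+1)\binom{k+1}{i+1}=(k+1)\binom{k}{i}$.

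The asymptotic \textit{in particular} statements are then direct consequences. From (1), $(n^{i+1}q_i)^{k+1}\ge(n^{k+1}q_k)^{i+1}$, so $n^{k+1}q_k=\om(1)$ forces $n^{i+1}q_i=\om(1)$. From (2), $s_k^{k+1}\le r_k/q_0=(nr_k)/(nq_0)\to 0$ when $nr_k\asymp 1$ and $nq_0=\om(1)$. From (3), $n^{k+1}q_k\ge(nr_k/s_k)^{k+1}\to\infty$ when $nr_k\asymp 1$ and $s_k=o(1)$. Under the assumption $nr_k\asymp 1$, these three close into the cyclic chain
\[
n^{k+1}q_k=\om(1)\;\Longrightarrow\;nq_0=\om(1)\;\Longrightarrow\;s_k=o(1)\;\Longrightarrow\;n^{k+1}q_k=\om(1),
\]
which yields the claimed equivalence. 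There is no real obstacle: once Theorem~\ref{thm:cha_HSI} has been invoked, the entire lemma reduces to elementary binomial bookkeeping, and the only subtlety (quickly dispatched) is that degenerate configurations with $q_k=0$ make every inequality hold trivially.
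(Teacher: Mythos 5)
Your proof is correct, but it takes a genuinely different route from the paper's. The paper argues \emph{intrinsically}, without invoking Theorem~\ref{thm:cha_HSI}: for (1) it observes that $(n^{i+2}q_{i+1})(n^iq_{i-1})/(n^{i+1}q_i)^2=s_i\le 1$, so the sequence $(n^{i+1}q_i)_{-1\le i\le k}$ is log-concave, and the inequality follows from $q_{-1}=1$; for (3) it uses that $r_i$ is nonincreasing (from $s_i\le 1$) to write $q_k=q_0r_0\cdots r_{k-1}\ge r_{k-1}^{k+1}$ and multiplies by $s_k^{k+1}$; and for (2) it introduces an auxiliary quantity $t_i$ (a conditional probability involving three overlapping $i$-simplices), shows $t_i=s_i/s_{i-1}\le 1$, so $s_i$ is nonincreasing, giving $r_k=q_0s_0\cdots s_k\ge q_0s_k^{k+1}$. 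You instead pass through the multi-parameter representation $X_n\sim X(n,\p)$, which turns all three inequalities into termwise exponent comparisons verified by binomial identities. Your approach is more mechanical and avoids the extra combinatorial machinery of $t_i$, at the cost of leaning on Theorem~\ref{thm:cha_HSI}; the paper's approach is self-contained at this point and, as a byproduct, isolates the structural log-concavity and monotonicity facts that drive the inequalities. Two small imprecisions in your write-up worth fixing: the reduction $\prod_i p_i^{a_i}\ge\prod_i p_i^{b_i}$ holding when $a_i\le b_i$ for all $i$ is only a \emph{sufficient} condition (not an equivalence, e.g.\ when some $p_i=1$), though sufficiency is all you need; and in the degenerate case $q_k=0$, inequality (1) reads $q_i^{k+1}\ge 0$ rather than $0\ge 0$, since $q_i$ may well be positive for $i<k$.
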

\begin{proof}
When $q_k=0$, the conclusions are trivial because $q_k=0$ implies that $r_k=s_k=0$. Hence, we may assume $q_0\ge q_1\ge\cdots\ge q_k>0$. Since
\[
\frac{(n^{i+2}q_{i+1})(n^iq_{i-1})}{(n^{i+1}q_i)^2}=\frac{r_i}{r_{i-1}}=s_i\le1\quad\text{for }0\le i\le k,
\]
the sequence $(n^{i+1}q_i)_{-1\le i\le k}$ is log-concave. Therefore, noting that $q_{-1}=1$, we have $n^{i+1}q_i\ge(n^{k+1}q_k)^{(i+1)/(k+1)}$ for $0\le i\le k$, which completes~(1). Furthermore, $r_{-1}=q_0$ and $r_k$ is nonincreasing with respect to $k$, so
\[
q_k=q_{k-1}r_{k-1}=q_{k-2}r_{k-2}r_{k-1}=\cdots=q_0r_0\cdots r_{k-2}r_{k-1}\ge(r_{k-1})^{k+1}. 
\]
Thus, we obtain $q_ks_k^{k+1}\ge(r_{k-1}s_k)^{k+1}=r_k^{k+1}$, which corresponds with~(3). For~(2), we additionally define
\[
t_i=\P(\tau_1\cup\tau_2\cup\tau_3\in X_n\mid\tau_1,\tau_2,\tau_3\in X_n)\quad\text{for }1\le i\le k.
\]
Here, $\tau_1$, $\tau_2$, and $\tau_3$ are arbitrarily fixed $i$-simplices in $\triangle_n$ such that $\dim(\tau_1\cap\tau_2)=\dim(\tau_2\cap\tau_3)=\dim(\tau_3\cap\tau_1)=i-1$ and $\dim(\tau_1\cap\tau_2\cap\tau_3)=i-2$. Then, Eq.~\eqref{eq:D0_SI} implies that $\P(\tau_1,\tau_2,\tau_3\in X_n)=q_i^3q_{i-2}/q_{i-1}^3$ for $1\le i\le k$. Therefore, we have $t_i=q_{i+1}/(q_i^3q_{i-2}/q_{i-1}^3)=r_ir_{i-2}/r_{i-1}^2=s_i/s_{i-1}$, which implies that $s_i$ is nonincreasing with respect to $i$. Thus, 
\[
r_k=r_{k-1}s_k=r_{k-2}s_{k-1}s_k=\cdots=q_0s_0s_1\cdots s_k\ge q_0s_k^{k+1},
\]
which corresponds with (2). The last conclusion of the theorem follows immediately from (1), (2), and (3). 
\end{proof}
\begin{prop}\label{prop:locConv}
Let $c>0$ and $\tau\in F_k(\triangle_n)$ be fixed. If $n^{k+1}q_k=\om(1)$ and $nr_k\sim c$, then $X_n[\tau]$ under $\P_\tau$ converges to $[\pt_k(c),\tau_o]$ in distribution as $n\to\infty$. 
\end{prop}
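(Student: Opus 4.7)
The plan is to invoke Lemma~\ref{lem:chara_LWC} to reduce the claimed convergence in distribution to the convergence of masses on every open ball of $\cS_k$. Since $d_\loc$ is an ultrametric, every open ball has the form $\{[\a]\in\cS_k:\a_l\simeq(X',\tau')_l\}$ for some $[X',\tau']\in\cS_k$ and some $l\in\N$. It therefore suffices to show that for every such $[X',\tau']$ and every $l\in\N$,
\[
\P_\tau((X_n,\tau)_l\simeq(X',\tau')_l)\longrightarrow\P'((\pt_k(c),\tau_o)_l\simeq(X',\tau')_l)\quad\text{as $n\to\infty$.}
\]

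First I would treat the case where $(X',\tau')_l$ is the depth-$l$ restriction $\a_l$ of some $k$-rooted tree $\a$. Setting $c_n\coloneqq(n-k-1)r_k$, so that $c_n\to c$ by assumption, Proposition~\ref{prop:mass_dif} yields
\[
|\P_\tau((X_n,\tau)_l\simeq\a_l)-\P'((\pt_k(c_n),\tau_o)_l\simeq\a_l)|\le f_k(\a_l)r_k+f_k(\a_l)^2(nr_k)(r_k/2+s_k).
\]
Now $nr_k\sim c$ forces $r_k\to0$, and Lemma~\ref{lem:qrs} applied to the hypotheses $n^{k+1}q_k=\om(1)$ and $nr_k\asymp1$ yields $s_k=o(1)$; hence the right-hand side vanishes. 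On the other hand, $\P'((\pt_k(c'),\tau_o)_l\simeq\a_l)$ is a finite sum of products of Poisson point masses $e^{-c'}(c')^m/m!$, and so it is a continuous function of $c'>0$. Combined with $c_n\to c$, this proves the convergence for tree targets.

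Finally I would exclude mass escaping to non-tree targets. Enumerating the countably many isomorphism classes of finite $k$-rooted trees at depth $l$ as $\a^{(1)},\a^{(2)},\ldots$, one has $\sum_j\P'((\pt_k(c),\tau_o)_l\simeq\a^{(j)}_l)=1$ since $\pt_k(c)$ is almost surely a tree, while $\sum_j\P_\tau((X_n,\tau)_l\simeq\a^{(j)}_l)\le1$ by disjointness. By the previous paragraph and Fatou's lemma,
\[
1=\sum_j\lim_{n\to\infty}\P_\tau((X_n,\tau)_l\simeq\a^{(j)}_l)\le\liminf_{n\to\infty}\sum_j\P_\tau((X_n,\tau)_l\simeq\a^{(j)}_l)\le1,
\]
so the total tree mass converges to $1$. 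Consequently, whenever $(X',\tau')_l$ fails to be a tree, $\P_\tau((X_n,\tau)_l\simeq(X',\tau')_l)\to0$, matching the vanishing right-hand side. The bulk of the technical work has already been absorbed into Proposition~\ref{prop:mass_dif}; the main items of care are the conversion of the hypotheses into $s_k=o(1)$ via Lemma~\ref{lem:qrs}(2)--(3) and the soft Fatou-type mass-conservation argument above.
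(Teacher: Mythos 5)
Your proof is correct and follows the same route as the paper: reduce via Lemma~\ref{lem:chara_LWC} to convergence of ball masses, invoke Proposition~\ref{prop:mass_dif} with $c_n=(n-k-1)r_k$, use $nr_k\sim c$ and Lemma~\ref{lem:qrs} to conclude $r_k\to0$ and $s_k=o(1)$ so the error bound vanishes, and finally pass $\P'((\pt_k(c_n),\tau_o)_l\simeq\a_l)\to\P'((\pt_k(c),\tau_o)_l\simeq\a_l)$.

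What you add beyond the paper's proof is worth noting. The paper's proof opens with ``Let $[\a]\in\cS_k$ and $l\in\N$'' and then applies Proposition~\ref{prop:mass_dif}, but that proposition --- and the notation $f_k(\a_l)$ appearing in its bound --- is only formulated for $\a$ a $k$-rooted tree, so the paper implicitly restricts attention to tree targets and leaves the non-tree case unaddressed. You make this restriction explicit and then close the gap with a Fatou-type mass-conservation argument: since $\pt_k(c)$ is almost surely a tree, the limiting tree masses sum to $1$, while the prelimit tree masses are bounded by $1$; Fatou forces the total tree mass under $\P_\tau$ to converge to $1$, so every non-tree ball has vanishing mass, matching $\nu_k(c)$. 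You also spell out why $c'\mapsto\P'((\pt_k(c'),\tau_o)_l\simeq\a_l)$ is continuous (a finite sum of products of Poisson point masses), which the paper only asserts implicitly. Both additions are correct and tighten up minor omissions without changing the underlying strategy.
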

\begin{proof}
Let $[\a]\in\cS_k$ and $l\in\N$. From Lemma~\ref{lem:chara_LWC}, it suffices to prove that
\[
\lim_{n\to\infty}\P_\tau((X_n,\tau)_l\simeq\a_l)=\P'((\pt_k(c),\tau_o)_l\simeq\a_l). 
\]
From Proposition~\ref{prop:mass_dif}, we have
\begin{align*}
\bigl|\P_\tau((X_n,\tau)_l\simeq\a_l)-\P'((\pt_k(c_n),\tau_o)_l\simeq\a_l)\bigr|
\le\{1+f_k(\a_l)n(r_k/2+s_k)\}f_k(\a_l)r_k.
\end{align*}
Since $nr_k\sim c$ and $s_k=o(1)$
from Lemma~\ref{lem:qrs},
the right-hand side converges to zero as $n\to\infty$. Furthermore, noting that $\lim_{n\to\infty}c_n=c$, we have
\[
\lim_{n\to\infty}\P'((\pt_k(c_n),\tau_o)_l\simeq\a_l)=\P'((\pt_k(c),\tau_o)_l\simeq\a_l).
\]
These estimates complete the proof. 
\end{proof}

We can also prove the two-root version of Proposition~\ref{prop:locConv} in the same manner. To state the proposition, for $\tau\neq\tau'\in F_k(\triangle_n)$ such that $\P(\tau,\tau'\in X)>0$, we define a probability space $(\Om_{\tau,\tau'}, \cF_{\tau,\tau'}, \P_{\tau,\tau'})$ by
\[
\Om_{\tau,\tau'}\coloneqq\{\tau,\tau'\in X\}, \,\cF_{\tau,\tau'}\coloneqq\{B\in\cF\mid B\subset \Om_{\tau,\tau'} \}, \text{ and } \P_{\tau,\tau'}(\cdot)\coloneqq\P(\cdot\mid\Om_{\tau,\tau'} ). 
\]
The expectation with respect to $\P_{\tau,\tau'}$ is denoted by $\E_{\tau,\tau'}$. For two disjoint $k$-simplices $\tau$, $\tau'$ in $\triangle_n$, we carry out each breadth-first traversal of $X_n$ from $\tau$ and $\tau'$ alternately, avoiding each other. We can carefully modify the estimates in Section~\ref{ssec:E_BFT} for the two-root version of breadth-first traversal, and we can confirm that for any $[\a],[\b]\in\cS_k$ and $l,m\in\N$, 
\[
\lim_{n\to\infty}\P_{\tau,\tau'}((X_n,\tau)_l\simeq\a_l,(X_n,\tau')_m\simeq\b_m)=\P'((\pt_k(c),\tau_o)_l\simeq\a_l)\P'((\pt_k(c),\tau_o)_m\simeq\b_m). 
\]
The above equation yields the following proposition.
\begin{prop}\label{prop:locConv1}
Let $c>0$, and let $\tau,\tau'\in F_k(\triangle_n)$ be fixed to be disjoint. If $n^{k+1}q_k=\om(1)$ and $nr_k\sim c$, then $(X_n[\tau],X_n[\tau'])$ under $\P_{\tau,\tau'}$ converges to $([\pt_k(c),\tau_o],[\pt'_k(c),\tau'_o])$ in distribution as $n\to\infty$. Here, $[\pt'_k(c),\tau'_o]$ is an independent copy of $[\pt_k(c),\tau_o]$. 
\end{prop}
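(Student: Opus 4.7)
The plan is to follow the roadmap spelled out in the paragraph preceding the proposition: carry out a two-root breadth-first traversal, adapt the estimates of Section~\ref{ssec:E_BFT} to this joint exploration, and then invoke (the obvious product-space version of) Lemma~\ref{lem:chara_LWC}. Concretely, I would begin with a careful definition of the joint BFT: starting from disjoint $\tau,\tau'\in F_k(\triangle_n)$, iterate two simultaneous queue-based explorations $\bigl(T_i^{(n)},\tau_i^{(n)}\bigr)$ and $\bigl(T_i'^{(n)},\tau_i'^{(n)}\bigr)$, alternating which one is advanced at each step. When extending one tree at step $i$, any would-be new $(k+1)$-simplex that is incident to a vertex already present in the other tree is treated as a collision and recorded in a modified counter $\tilde c_i^{(n)}$, rather than absorbed into the tree; otherwise the exploration proceeds as in Section~\ref{ssec:E_BFT}.

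Next, I would rederive the key per-step estimates under the law $\P_{\tau,\tau'}$. The proof of Lemma~\ref{lem:growth} carries over verbatim with only notational changes, because Eq.~\eqref{eq:D1_SI}, Eq.~\eqref{eq:M_CP}, and the FKG positive-correlation argument (via Theorem~\ref{thm:cha_HSI}) apply just as well to the enlarged conditioning event $\bigl\{\tau,\tau'\in X_n\bigr\}\cap\bigl\{T_i^{(n)}=T,\,T_i'^{(n)}=T'\bigr\}$ for any pair of vertex-disjoint $(k{+}1)$-trees $T,T'$ containing $\tau,\tau'$ respectively. Consequently I would obtain the two-root analog of Proposition~\ref{prop:TVgrow}, controlling both $\E_{\tau,\tau'}\bigl[m_{i+1}^{(n)}\bigr]$ and $\E_{\tau,\tau'}\bigl[m_{i+1}'^{(n)}\bigr]$ by $nr_k$ and bounding their joint total-variation distance from $\po_{(n-k-1)r_k}^{\otimes 2}$ by $O((f_0(T)+f_0(T')-k)r_k+inr_ks_k)$; the independence of the two Poisson factors follows because, once we condition on the current state of both trees, the binomial trials determining $m_{i+1}^{(n)}$ and $m_{i+1}'^{(n)}$ are indexed by disjoint vertex sets and are mutually independent by the last clause of Lemma~\ref{lem:growth} (applied in the two-root setting).

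With these estimates in hand, the argument of Lemma~\ref{lem:probdiff} and Proposition~\ref{prop:mass_dif} telescopes on the joint exploration and produces the bound
\begin{equation*}
\bigl|\P_{\tau,\tau'}((X_n,\tau)_l\simeq\a_l,(X_n,\tau')_m\simeq\b_m)
-\P'((\pt_k(c_n),\tau_o)_l\simeq\a_l)\,\P'((\pt_k(c_n),\tau_o)_m\simeq\b_m)\bigr|
\le C(\a_l,\b_m)\bigl(r_k+nr_ks_k\bigr),
\end{equation*}
where $C(\a_l,\b_m)$ depends only on $f_k(\a_l)$ and $f_k(\b_m)$. Under the hypotheses $nr_k\sim c$ and $n^{k+1}q_k=\om(1)$, Lemma~\ref{lem:qrs} gives $s_k=o(1)$, so $r_k=o(1)$ and $nr_ks_k=o(1)$, and the right-hand side tends to zero. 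Combining this with $c_n\to c$ and the continuity of $\P'((\pt_k(\cdot),\tau_o)_l\simeq\a_l)$ yields the displayed limit that the author asserts, and Lemma~\ref{lem:chara_LWC} applied on the product space $\cS_k\times\cS_k$ finishes the proof.

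The main obstacle is the bookkeeping for collisions between the two explorations: I must show that the extra events \emph{the next edge from tree~$1$ lands on a vertex of tree~$2$} contribute negligibly. This reduces to a union bound of the form $O\bigl(\E_{\tau,\tau'}\bigl[f_0(T_i^{(n)})\,f_0(T_i'^{(n)})\bigr]\,s_k\bigr)$ at step~$i$, which is $O(s_k)=o(1)$ for fixed $i$ thanks to the linear-in-$i$ growth estimate \eqref{eq:vnumber}. Apart from this cross-term, the rest is essentially a bookkeeping translation of Section~\ref{ssec:E_BFT}, with the Poisson independence of the two limiting trees coming for free from the conditional independence of disjoint-vertex binomial trials under the homogeneous and spatially independent law.
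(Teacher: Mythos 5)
Your proposal follows the exact roadmap the paper lays out in the paragraph preceding the proposition (two-root breadth-first traversal alternately and avoiding each other, modifying the Section~4.3 estimates, factorizing the joint local-isomorphism probabilities, and concluding via the product analog of Lemma~\ref{lem:chara_LWC}), and the paper itself supplies no more detail than this sketch, so your elaboration is precisely the expected fleshing-out. One step, however, is asserted too strongly. You claim that ``the binomial trials determining $m_{i+1}^{(n)}$ and $m_{i+1}'^{(n)}$ are indexed by disjoint vertex sets and are mutually independent by the last clause of Lemma~\ref{lem:growth}.'' Both clauses of that sentence are off: the trials range over the same pool of unvisited vertices, not disjoint ones; and the events $\{\tau_{i+1}(T,\tau)\cup\{w\}\in X_n\}$ and $\{\tau'_{j+1}(T',\tau')\cup\{w\}\in X_n\}$ for the same $w$ are not conditionally independent given $T\cup T'\subset X_n$ when $q_0<1$, because both require the coin flip for $\{w\}$ in the underlying multi-parameter model. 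A short computation via spatial independence gives $\P(\text{both}\mid T\cup T'\subset X_n)=r_k^2/q_0$ rather than $r_k^2$, so the pair $(m_{i+1}^{(n)},m_{i+1}'^{(n)})$ has a small positive correlation that the existing last clause of Lemma~\ref{lem:growth} does not address.

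This is not fatal: summing over $w$ shows the resulting error in total-variation distance is of order $nr_k^2/q_0=c^2/(nq_0)+o(1)=o(1)$ by Lemma~\ref{lem:qrs} (since $nq_0=\om(1)$ under the hypotheses), so it can be folded into the existing error terms alongside $nr_ks_k$. But you need to add this term explicitly; the ``carefully modify the estimates'' in the paper's sketch is almost certainly pointing at this cross-root correlation, and your current argument skips it by misstating the index sets as disjoint. Apart from this, the bookkeeping for collisions, the telescoping as in Lemma~\ref{lem:probdiff} and Proposition~\ref{prop:mass_dif}, and the reduction to weak convergence of open-ball masses on $\cS_k\times\cS_k$ all match the paper's intended proof.
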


We now turn to proving Theorem~\ref{thm:LWLT_HSIRSC}. The following lemma states that the number of simplicies in $X_n$ is concentrated around its mean.
\begin{lemma}\label{lem:num_faces}
Provided that $n^{k+1}q_k=\om(1)$, it holds that for any $r\in[1,\infty)$, 
\[
\lim_{n\to\infty}\E\biggl[\biggl|\frac{f_k(X_n)}{n^{k+1}q_k}-\frac1{(k+1)!}\biggl|^r\biggr]=0. 
\]
In particular, $\lim_{n\to\infty}\P(f_k(X_n)>0)=1$. 
\end{lemma}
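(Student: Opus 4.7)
The plan is to use the method of moments. Set $Y_n \coloneqq f_k(X_n)/(n^{k+1}q_k)$. I will show that $\E[Y_n^p] \to \bigl(1/(k+1)!\bigr)^p$ for every positive integer $p$; by the binomial expansion
\[
\E\bigl[(Y_n - 1/(k+1)!)^{2m}\bigr] = \sum_{j=0}^{2m} \binom{2m}{j} \bigl(-\tfrac{1}{(k+1)!}\bigr)^{2m-j} \E[Y_n^j],
\]
moment convergence forces $\E[|Y_n - 1/(k+1)!|^{2m}] \to 0$ for every $m \in \N$, and by monotonicity of $L^r$-norms the stated $L^r$ convergence follows for every $r \in [1,\infty)$. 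The final statement $\P(f_k(X_n) > 0) \to 1$ is then immediate from $L^1$ convergence to the positive constant $1/(k+1)!$.

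The first moment is direct from homogeneity: $\E[f_k(X_n)] = \binom{n}{k+1}q_k$, so $\E[Y_n] \to 1/(k+1)!$. For the second moment, expand $\E[f_k(X_n)^2]$ as a sum over ordered pairs $(\tau, \tau') \in F_k(\triangle_n)^2$ and stratify by $j \coloneqq \dim(\tau \cap \tau')$. By spatial independence (Eq.~\eqref{eq:D0_SI}) and homogeneity, $\P(\tau, \tau' \in X_n) = q_k^2/q_j$, with the convention $q_{-1} \coloneqq 1$. The number of ordered pairs with a given $j \in \{-1, 0, \ldots, k\}$ is $\Theta(n^{2k+1-j})$, and Lemma~\ref{lem:qrs}(1) yields $q_j \ge q_k^{(j+1)/(k+1)}$, so each contribution is $O\bigl((n^{k+1}q_k)^{(2k+1-j)/(k+1)}\bigr)$. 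The exponent equals $2$ only for $j = -1$, where the disjoint pairs give $\sim (n^{k+1}q_k)^2/((k+1)!)^2$, and is strictly less than $2$ for $j \ge 0$; using $n^{k+1}q_k = \omega(1)$ those contributions are $o((n^{k+1}q_k)^2)$. Hence $\E[Y_n^2] \to \bigl(1/(k+1)!\bigr)^2$.

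The same scheme handles general $p$. Expand $\E[f_k(X_n)^p]$ as a sum over ordered $p$-tuples $(\tau_1,\ldots,\tau_p)$, and via Theorem~\ref{thm:cha_HSI} write $\P\bigl(\bigcup_{i=1}^p K(\tau_i) \subset X_n\bigr) = \prod_j p_j^{f_j(\bigcup_i K(\tau_i))}$. Partition the tuples by the isomorphism type of $\bigcup_i K(\tau_i)$; the dominant type (pairwise vertex-disjoint $k$-simplices) contributes $\sim (n^{k+1}q_k)^p/((k+1)!)^p$, while each of the $O(1)$ remaining types has vertex count $v < p(k+1)$ and is subdominant by the same mechanism as in the variance estimate. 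The main obstacle is the combinatorial bookkeeping for these non-disjoint types: for each one must bound $n^v \prod_j p_j^{f_j(\bigcup_i K(\tau_i))} = O\bigl((n^{k+1}q_k)^{p-\eps}\bigr)$ for some $\eps > 0$, and the essential tool is again the log-concavity of $(n^{i+1}q_i)_{-1 \le i \le k}$ embodied in Lemma~\ref{lem:qrs}(1), which lets one trade the vertex deficit $p(k+1) - v$ against the deficit in the exponents of the $p_j$'s.
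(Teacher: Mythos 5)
Your overall approach — method of moments, reducing $L^r$ convergence to convergence of all integer moments via binomial expansion, then first and second moment computations — is exactly what the paper does, and the computations you have written out are correct. The issue is that the general moment $\E[Y_n^p]$ for $p\ge 3$ is the substance of the lemma, and you explicitly defer it (``the main obstacle is the combinatorial bookkeeping for these non-disjoint types'') while asserting that log-concavity of $(n^{i+1}q_i)_{-1\le i\le k}$ resolves it. That assertion is true in the end, but it is not an immediate consequence of Lemma~\ref{lem:qrs}(1) as you have framed it: for $p\ge3$ the complex $Y=\bigcup_i K(\tau_i)$ can have intricate overlap structure (e.g.\ $\tau_3$ meeting $\tau_1$ and $\tau_2$ in two disjoint faces, or a face lying in three of the $\tau_i$), and bounding $n^{f_0(Y)}\prod_j p_j^{f_j(Y)}$ against $(n^{k+1}q_k)^p$ type-by-type requires an actual argument you have not given. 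This is a genuine gap, not mere bookkeeping.

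The paper avoids enumerating isomorphism types with a telescoping device. It defines $x_{m,l}$ as the sum of $\P(\tau_1,\ldots,\tau_m\in X_n)$ over tuples where $\tau_{l+1},\ldots,\tau_m$ are required to be disjoint from everything else, so $x_{m,1}/(n^{k+1}q_k)^m\to((k+1)!)^{-m}$ is the disjoint main term and $x_{m,m}$ is the full $m$-th moment. The increment $x_{m,l+1}-x_{m,l}$ is then controlled by conditioning on a single simplex at a time: if $\#\bigl(\tau_{l+1}\cap(\tau_1\cup\cdots\cup\tau_l)\bigr)=i\ge1$, spatial independence (via Eqs.~\eqref{eq:D1_SI} and~\eqref{eq:M_CP}) gives $\P(\tau_{l+1}\in X_n\mid\tau_1,\ldots,\tau_l\in X_n)\le q_k/q_{i-1}$, and there are only $O(n^{k+1-i})$ choices of such $\tau_{l+1}$, so the relative contribution is $O(1/(n^iq_{i-1}))=o(1)$ by Lemma~\ref{lem:qrs}(1); an induction on $m$ closes the argument. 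To salvage your sketch, do the same: rather than estimating $\prod_j p_j^{f_j(Y)}$ directly from the isomorphism type, add the $\tau_i$'s one at a time and use the spatial-independence bound $\P(\tau_i\in X_n\mid\tau_1,\ldots,\tau_{i-1}\in X_n)\le q_k/q_{j_i-1}$ with $j_i=\#\bigl(\tau_i\cap(\tau_1\cup\cdots\cup\tau_{i-1})\bigr)$; every step with $j_i\ge1$ then costs a factor $1/(n^{j_i}q_{j_i-1})=o(1)$ against the available vertex count, and that is precisely where $n^{k+1}q_k=\om(1)$ enters through Lemma~\ref{lem:qrs}(1).
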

\begin{proof}
It suffices to prove that for all $m\in\N$, 
\[
\lim_{n\to\infty}\E\left[\left(\frac{f_k(X_n)}{n^{k+1}q_k}\right)^m\right]=\biggl(\frac1{(k+1)!}\biggr)^m. 
\]
Indeed, if it holds, then for any $r\in[1,\infty)$, 
\begin{align*}
\E\biggl[\biggl|\frac{f_k(X_n)}{n^{k+1}q_k}-\frac1{(k+1)!}\biggr|^r\biggr]^{2\lceil r\rceil/r}
&\le\E\biggl[\biggl(\frac{f_k(X_n)}{n^{k+1}q_k}-\frac1{(k+1)!}\biggr)^{2\lceil r\rceil}\biggr]\quad\text{(from H\"older's inequality)}\\
&=\sum_{m=0}^{2\lceil r\rceil}\binom{2\lceil r\rceil}m\E\left[\left(\frac{f_k(X_n)}{n^{k+1}q_k}\right)^m\right]\left(\frac{-1}{(k+1)!}\right)^{2\lceil r\rceil-m}\\
&\xrightarrow[n\to\infty]{}0. 
\end{align*}

Now, for $m\in\N$ and
$1\le l\le m$,
define
\[
x_{m,l}\coloneqq\sum_{\tau_1,\ldots,\tau_m}\P(\tau_1,\tau_2,\ldots,\tau_m\in X_n), 
\]
where the summation is taken over all $\tau_1,\ldots,\tau_m\in F_k(\triangle_n)$ such that for each $h=l+1,\ldots,m$, the simplex $\tau_h$ is disjoint from the others. Clearly, $x_{m,l}$ is nondecreasing with respect to $l$. Furthermore, we have
\begin{align*}
\E\left[\left(\frac{f_k(X_n)}{n^{k+1}q_k}\right)^m\right]
&=\frac1{(n^{k+1}q_k)^m}\E\Biggl[\Biggl(\sum_{\tau\in F_k(\triangle_n)}1_{\{\tau\in X_n\}}\Biggr)^m\Biggr]\\
&=\frac1{(n^{k+1}q_k)^m}\sum_{\tau_1,\ldots,\tau_m\in F_k(\triangle_n)}\P(\tau_1,\tau_2,\ldots,\tau_m\in X_n)\\
&=\frac{x_{m,m}}{(n^{k+1}q_k)^m}. 
\end{align*}
Hence, we prove that for all $m\in\N$, 
\begin{equation}\label{eq:mpower1}
\lim_{n\to\infty}\frac{x_{m,m}}{(n^{k+1}q_k)^m}=\biggl(\frac1{(k+1)!}\biggr)^m. 
\end{equation}
We use an inductive argument on $m\in\N$. When $m=1$, the conclusion is trivial. Assume that Eq.~\eqref{eq:mpower1} holds up to $m-1$ for some $m\ge2$. For each
$1\le l\le m-1$,
we have
\begin{align*}
&x_{m,l+1}-x_{m,l}\\
&=\sum_{i=1}^{k+1}\sum_{\tau_1,\ldots,\tau_m}\P(\tau_1,\tau_2,\ldots,\tau_m\in X_n)\\
&=\sum_{i=1}^{k+1}\sum_{\tau_1,\ldots,\tau_m}\P(\tau_1,\tau_2,\ldots,\tau_l\in X_n)\P(\tau_{l+1}\in X_n\mid\tau_1,\tau_2,\ldots,\tau_l\in X_n)q_k^{m-l-1}\\
&\le\sum_{i=1}^{k+1}\sum_{\tau_1,\ldots,\tau_m}\P(\tau_1,\tau_2,\ldots,\tau_l\in X_n)(q_k/q_{i-1})q_k^{m-l-1}\\
&\le\sum_{i=1}^{k+1}\binom{(k+1)l}i\binom n{k+1-i}\binom n{k+1}^{m-l-1}\sum_{\tau_1,\ldots,\tau_l\in F_k(\triangle_n)}\P(\tau_1,\tau_2,\ldots,\tau_l\in X_n)q_k^{m-l}/q_{i-1}\\
&\le((k+1)l)^{k+1}(n^{k+1}q_k)^{m-l}x_{l,l}\sum_{i=1}^{k+1}\frac1{n^iq_{i-1}}.
\end{align*}
Here, the summations in the second, third, and fourth lines are taken over all $\tau_1,\ldots,\tau_m\in F_k(\triangle_n)$ such that for each $h=l+2,\ldots,m$, the simplex $\tau_h$ is disjoint from the others and
\[
\#\bigl((\tau_1\cup\cdots\cup\tau_l)\cap\tau_{l+1}\bigr)=i.
\]
Therefore, by the assumption of induction and Lemma~\ref{lem:qrs}~(1),
\[
\frac{x_{m,l+1}-x_{m,l}}{(n^{k+1}q_k)^m}\le\frac{((k+1)l)^{k+1}x_{l,l}}{(n^{k+1}q_k)^l}\sum_{i=1}^{k+1}\frac1{n^iq_{i-1}}=o(1). 
\]
Furthermore, 
\[
\lim_{n\to\infty}\frac{x_{m,1}}{(n^{k+1}q_k)^m}=\lim_{n\to\infty}\frac{n!}{((k+1)!)^m(n-(k+1)m)!n^{(k+1)m}}=\biggl(\frac1{(k+1)!}\biggr)^m. 
\]
Thus, we obtain
\[
\lim_{n\to\infty}\frac{x_{m,m}}{(n^{k+1}q_k)^m}=\lim_{n\to\infty}\sum_{l=1}^{m-1}\frac{x_{m,l+1}-x_{m,l}}{(n^{k+1}q_k)^m}+\frac{x_{m,1}}{(n^{k+1}q_k)^m}=\biggl(\frac1{(k+1)!}\biggr)^m. \qedhere
\]
\end{proof}

Finally, we move on to proving Theorem~\ref{thm:LWLT_HSIRSC}. Let us denote $\nu g=\int_{\cS_k}g\,d\nu$ for $g\in C_b(\cS_k)$ and $\nu\in\cP_{\cS_k}$ by convention. 
\begin{proof}[Proof of Theorem~\ref{thm:LWLT_HSIRSC}]
It suffices to prove that for any $g\in C_b(\cS_k)$, 
\begin{align}
\lim_{n\to\infty}\E[\lm_k(X_n)g\mid\dim X_n\ge k]&=\nu_k(c)g\label{eq:1moment}
\shortintertext{and}
\lim_{n\to\infty}\E\bigl[(\lm_k(X_n)g)^2\mid\dim X_n\ge k\bigr]&=(\nu_k(c)g)^2. \label{eq:2moment}
\end{align}
Indeed, Eqs.~\eqref{eq:1moment} and~\eqref{eq:2moment} together imply that
\[
\lim_{n\to\infty}\E\bigl[(\lm_k(X_n)g-\nu_k(c)g)^2\mid\dim X_n\ge k\bigr]=0. 
\]
In particular, $\lm_k(X_n)g$ under $P(\cdot\mid\dim X_n\ge k)$ converges to $\nu_k(c)g$ in distribution as $n\to\infty$ for any $g\in C_b(\cS_k)$, which is equivalent to the conclusion~(see, e.g.,~\cite[Theorem~4.11]{Kal}). Hence, suppose that $g\in C_b(\cS_k)$. We define a finite measure on $\cS_k$ by
\[
\tilde\lm_k(X_n)\coloneqq\frac1{\binom n{k+1}q_k}\sum_{\rho\in F_k(X_n)}\dl_{X[\rho]}. 
\]

We begin by considering Eq.~\eqref{eq:1moment}. Let $\tau\in F_k(\triangle_n)$ be arbitrarily fixed. Then, for any $A\in\cB_{\cS_k}$, we have
\begin{align*}
\bigl(\E\tilde\lm_k(X_n)\bigr)(A)
&=\frac1{\binom n{k+1}q_k}\sum_{\rho\in F_k(\triangle_n)}\P(\rho\in X_n,X_n[\rho]\in A)\\
&=\P(\tau\in X_n,X_n[\tau]\in A)/q_k	\quad\text{(from the homogeneity of $X_n$)}\\
&=\P_\tau(X_n[\tau]\in A),
\end{align*}
which implies that $\E\bigl[\tilde\lm_k(X_n)g\bigr]=\bigl(\E\tilde\lm_k(X_n)\bigr)g=\E_{\tau}[g(X_n[\tau])]$. Therefore, Proposition~\ref{prop:locConv} yields
\begin{equation}\label{eq:siki1}
\lim_{n\to\infty}\E\bigl[\tilde\lm_k(X_n)g\bigr]=\E'[g([\pt_k(c),\tau_o])]=\nu_k(c)g. 
\end{equation}
Furthermore, we have
\begin{align*}
&\bigl|\E[\lm_k(X_n)g\mid\dim X_n\ge k]-\E\bigl[\tilde\lm_k(X_n)g\bigr]\bigr|\\
&=\Biggl|\E\Biggl[\biggl(\frac{1_{\{f_k(X_n)>0\}}}{f_k(X_n)\P(f_k(X_n)>0)}-\frac1{\binom n{k+1}q_k}\biggr)\sum_{\rho\in F_k(X_n)}g(X_n[\rho])\Biggr]\Biggr|\\
&\le\|g\|_\infty\E\biggl[\biggl|\frac{1_{\{f_k(X_n)>0\}}}{\P(f_k(X_n)>0)}-\frac{f_k(X_n)}{\binom n{k+1}q_k}\biggr|\biggr]\\
&\le\|g\|_\infty\biggl(\E\biggl[\biggl|\frac{1_{\{f_k(X_n)>0\}}}{\P(f_k(X_n)>0)}-1\biggr|\biggr]+\E\biggl[\biggl|\frac{f_k(X_n)}{\binom n{k+1}q_k}-1\biggr|\biggr]\biggr). 
\end{align*}
Here, $\|g\|_\infty$ indicates the supremum norm of $g$. From Lemma~\ref{lem:num_faces}, the last line above converges to zero as $n\to\infty$. Thus, combining this estimate with Eq.~\eqref{eq:siki1} yields Eq.~\eqref{eq:1moment}. 

Next, we consider Eq.~\eqref{eq:2moment}. Let $\tau,\tau'\in F_k(\triangle_n)$ be arbitrarily fixed to be disjoint. From the homogeneity of $X_n$, we have
\begin{align}
&\E\bigl[\bigl(\tilde\lm_k(X_n)g\bigr)^2\bigr]\nonumber\\
&=\frac1{\binom n{k+1}^2q_k^2}\E\Biggl[\Biggl(\sum_{\rho\in F_k(\triangle_n)}1_{\{\rho\in X_n\}}g(X_n[\rho])\Biggr)^2\Biggr]\nonumber\\
&=\frac1{\binom n{k+1}^2q_k^2}\sum_{i=0}^{k+1}\sum_{\substack{\rho,\rho'\in F_k(\triangle_n)\\\#(\rho\cap\rho')=i}}\E[1_{\{\rho,\rho'\in X_n\}}g(X_n[\rho])g(X_n[\rho'])]\nonumber\\
&=\frac{\binom{n-k-1}{k+1}}{\binom n{k+1}}\E_{\tau,\tau'}[g(X_n[\tau])g(X_n[\tau'])]+\sum_{i=1}^{k+1}\frac1{\binom n{k+1}^2q_{i-1}}\sum_{\substack{\rho,\rho'\in F_k(\triangle_n)\\\#(\rho\cap\rho')=i}}\E_{\rho,\rho'}[g(X_n[\rho])g(X_n[\rho'])]. \label{eq:siki3}
\end{align}
Proposition~\ref{prop:locConv1} implies that
\[
\lim_{n\to\infty}\E_{\tau,\tau'}[g(X_n[\tau])g(X_n[\tau'])]=\E'[g([\pt_k(c),\tau_o])]^2=(\nu_k(c)g)^2. 
\]
Therefore, the first term of Eq.~\eqref{eq:siki3} converges to $(\nu_k(c)g)^2$ as $n\to\infty$. Furthermore, for each $1\le i\le k+1$, 
\[
\frac1{\binom n{k+1}^2q_{i-1}}\left|\sum_{\substack{\rho,\rho'\in F_k(\triangle_n)\\\#(\rho\cap\rho)=i}}\E_{\rho,\rho'}[g(X_n[\rho])g(X_n[\rho'])]\right|
\le\frac{\binom{k+1}i\binom{n-k-1}{k+1-i}\|g\|_\infty^2}{\binom n{k+1}q_{i-1}}
\le\frac{\binom{k+1}in^{k+1}\|g\|_\infty^2}{\binom n{k+1}n^iq_{i-1}}. 
\]
From Lemma~\ref{lem:qrs}~(1), the right-hand side of the above equation converges to zero as $n\to\infty$. Thus, we obtain
\begin{equation}\label{eq:siki4}
\lim_{n\to\infty}\E\bigl[\bigl(\tilde\lm_k(X_n)g\bigr)^2\bigr]=(\nu_k(c)g)^2. 
\end{equation}
We also have
\begin{align*}
&\bigl|\E\bigl[(\lm_k(X_n)g)^2\mid\dim X_n\ge k\bigr]-\E\bigl[\bigl(\tilde\lm_k(X_n)g\bigr)^2\bigr]\bigr|\\
&=\Biggl|\E\Biggl[\biggl(\frac{1_{\{f_k(X_n)>0\}}}{f_k(X_n)^2\P(f_k(X_n)>0)}-\frac1{\binom n{k+1}^2q_k^2}\biggr)\Biggl(\sum_{\rho\in F_k(X_n)}g(X_n[\rho])\Biggr)^2\Biggr]\Biggr|\\
&\le\|g\|_\infty^2\E\biggl[\biggl|\frac{1_{\{f_k(X_n)>0\}}}{\P(f_k(X_n)>0)}-\biggl(\frac{f_k(X_n)}{\binom n{k+1}q_k}\biggr)^2\biggr|\biggr]\\
&\le\|g\|_\infty^2\biggl(\E\biggl[\biggl|\frac{1_{\{f_k(X_n)>0\}}}{\P(f_k(X_n)>0)}-1\biggr|\biggr]+\E\biggl[\biggl|\biggl(\frac{f_k(X_n)}{\binom n{k+1}q_k}\biggr)^2-1\biggr|\biggr]\biggr). 
\end{align*}
From Lemma~\ref{lem:num_faces}, the last line above converges to zero as $n\to\infty$. Thus, combining this estimate with Eq.~\eqref{eq:siki4} yields Eq.~\eqref{eq:2moment}. 
\end{proof}

\section{Convergence of Betti numbers and empirical spectral distributions}
\subsection{Statement of the result}\label{ssec:mainBetti}
In this section, we consider homogeneous and spatially independent random subcomplexes of $\triangle_n$ and study the asymptotic behavior of their Betti numbers and the empirical spectral distributions of their Laplacians as $n$ tends to infinity. Recall the definitions of the parameters $q_k$ and $r_k$ as described in Eq.~\eqref{eq:para}.
The following theorem is the main result in this section.
\begin{theorem}\label{thm:LT_B_HSIRSC}
Let $k\ge0$ and $c>0$ be fixed, and let $X_n$ be a homogeneous and spatially independent random subcomplex of $\triangle_n$. If $n^{k+1}q_k=\om(1)$ and $nr_k\sim c$, then the following $(1)$ and $(2)$ hold. 
\begin{enumerate}
\item For any $r\in[1,\infty)$,
\[
\lim_{n\to\infty}\E\biggl[\biggl|\frac{\b_k(X_n)}{n^{k+1}q_k}-\frac{h_k(c)}{(k+1)!}\biggr|^r\biggr]=0. 
\]
\item There exists a deterministic probability measure $\mu\in\cP_\R$ such that for any open set $U\subset\cP_\R$ such that $\mu\in U$, 
\[
\lim_{n\to\infty}\P\bigl(\mu_{L_k^{\up}(X_n)}\in U\mid\dim X_n\ge k\bigr)=1. 
\]
In other words, the empirical spectral distribution $\mu_{L_k^{\up}(X_n)}$ under $\P(\cdot\mid\dim X_n\ge k)$ converges weakly to $\mu$ in distribution as $n\to\infty$.
\end{enumerate}
\end{theorem}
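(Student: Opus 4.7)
The plan is to deduce both parts from Theorem~\ref{thm:LWLT_HSIRSC} together with Eq.~\eqref{eq:SpeMeas}, taking the limiting measure to be $\mu := \E'\bigl[\mu_{(\pt_k(c),\tau_o)}\bigr] \in \cP_\R$, which is the $\nu_k(c)$-average of the rooted spectral measures on the limiting Poisson tree.

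For part~(2), the crucial step is a continuity lemma asserting that for every $g \in C_b(\R)$, the map $\Phi_g \colon \cS_k \to \R$ defined by $\Phi_g([X,\tau]) := \int_\R g\,d\mu_{(X,\tau)}$ belongs to $C_b(\cS_k)$. Boundedness is immediate from $|\Phi_g| \le \|g\|_\infty$. For continuity, observe that when $g(x) = x^m$ is a monomial, $\Phi_g([X,\tau]) = (e_\tau, L_k^\up(X)^m e_\tau)_k$ depends only on $(X,\tau)_m$, so $\Phi_g$ is locally constant in the metric $d_\loc$; the case of general $g \in C_b(\R)$ is handled by uniform approximation on compact spectral windows together with essential self-adjointness of the up Laplacian on the rooted Poisson tree, which supplies the needed tightness of rooted spectral measures along convergent sequences in $\cS_k$. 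Given $\Phi_g \in C_b(\cS_k)$, Eq.~\eqref{eq:SpeMeas} identifies $\int_\R g\,d\mu_{L_k^\up(X_n)}$ with $\int_{\cS_k}\Phi_g\,d\lm_k(X_n)$, and Theorem~\ref{thm:LWLT_HSIRSC} yields convergence in probability to $\nu_k(c)(\Phi_g) = \int_\R g\,d\mu$ under $\P(\cdot \mid \dim X_n \ge k)$. Since this holds for every $g \in C_b(\R)$, $\mu_{L_k^\up(X_n)}$ converges weakly to the deterministic $\mu$ in distribution, which is exactly the conclusion of (2).

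For part~(1), the starting identity is the cohomological decomposition
\[
\b_k(X_n) = f_k(X_n) - \rank L_k^\up(X_n) - \rank L_{k-1}^\up(X_n),
\]
using $\rank L_j^\up = \rank d_j$. Dividing by $n^{k+1}q_k$, Lemma~\ref{lem:num_faces} delivers the first term in $L^r$ with limit $1/(k+1)!$. The third term is dominated deterministically by $f_{k-1}(X_n)/(n^{k+1}q_k) \le \binom{n}{k}/(n^{k+1}q_k) = O\bigl(1/(n r_{k-1})\bigr) = O\bigl(s_k/(n r_k)\bigr)$, which tends to $0$ by Lemma~\ref{lem:qrs} since $n r_k \sim c$ and $s_k = o(1)$. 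Hence the task reduces to showing that the normalized kernel mass $\mu_{L_k^\up(X_n)}(\{0\}) = \dim\ker L_k^\up(X_n)/f_k(X_n)$ converges in $L^r$ to $\mu(\{0\}) = h_k(c)$; combined with $f_k(X_n)/(n^{k+1}q_k) \to 1/(k+1)!$ this produces the asserted limit $h_k(c)/(k+1)!$.

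The main obstacle is precisely this convergence of the atom at zero. Part~(2) only supplies the one-sided bound $\limsup_n \mu_{L_k^\up(X_n)}(\{0\}) \le \mu(\{0\})$ by upper semi-continuity on the closed set $\{0\}$, which via the decomposition above translates into the easy lower bound $\liminf_n \b_k(X_n)/f_k(X_n) \ge \mu(\{0\})$. The matching reverse inequality requires an explicit construction of locally supported cocycles in $\ker L_k^\up(X_n)$ whose span has density tending to $\mu(\{0\})$: following the strategy of~\cite{LP1}, such cocycles are assembled on tree-like neighborhoods of $k$-simplices, which by Proposition~\ref{prop:locConv} dominate the local structure with probability tending to one, and their approximate independence is controlled by the sparsity of $X_n$ quantified in Section~\ref{ssec:E_BFT}. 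The identification $\mu(\{0\}) = \E'\bigl[\mu_{(\pt_k(c),\tau_o)}(\{0\})\bigr] = h_k(c)$ is then the Poisson tree computation already carried out in~\cite{LP1}, and it transfers verbatim because the local weak limit is identical for all homogeneous and spatially independent models in the scaling regime $nr_k \sim c$. Finally, the $L^r$ upgrade of the convergence follows from the deterministic majorant $\b_k(X_n)/(n^{k+1}q_k) \le f_k(X_n)/(n^{k+1}q_k)$ and the $L^r$ moment control of the latter given by Lemma~\ref{lem:num_faces}.
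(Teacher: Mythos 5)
Your handling of part~(2) is essentially the paper's argument: write $\mu_{L_k^\up(X_n)}=\lambda_k(X_n)M_k$ via Eq.~\eqref{eq:SpeMeas}, invoke continuity of the rooted-spectral-measure map (the paper cites~\cite[Lemma~3.2]{LP1} rather than re-deriving it as you sketch), and push Theorem~\ref{thm:LWLT_HSIRSC} through the continuous mapping theorem. That part is fine.

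For part~(1), two concrete problems. First, the ``deterministic'' bound $f_{k-1}(X_n)/(n^{k+1}q_k)\le\binom nk/(n^{k+1}q_k)=O(1/(nr_{k-1}))$ is false: $\binom nk/(n^{k+1}q_k)\asymp 1/(nq_k)=1/(nq_{k-1}r_{k-1})$, and nothing in the hypotheses prevents $nq_{k-1}\to0$ when $k\ge2$ (only $n^kq_{k-1}=\omega(1)$ is guaranteed). The correct way to kill the $\rank L_{k-1}^\up$ term is stochastic: by Lemma~\ref{lem:num_faces} applied in dimension $k-1$, $f_{k-1}(X_n)/(n^kq_{k-1})\to1/k!$ in $L^r$, and $n^kq_{k-1}/(n^{k+1}q_k)=1/(nr_{k-1})=s_k/(nr_k)\sim s_k/c\to0$ by Lemma~\ref{lem:qrs}; this is exactly the estimate the paper makes inside the proof of Lemma~\ref{lem:Morselower}. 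Second, your semicontinuity step has the inequality reversed: upper semicontinuity of $\mu\mapsto\mu(\{0\})$ and $\beta_k\le\dim Z^k=f_k\mu_{L_k^\up}(\{0\})$ give $\limsup_n\beta_k(X_n)/f_k(X_n)\le\mu(\{0\})\le h_k(c)$, i.e.\ the \emph{upper} bound on the Betti number (the paper's Proposition~\ref{prop:UB_B}), not a lower bound as you wrote.

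The real gap is that the matching lower bound, which you correctly identify as the hard direction, is only gestured at. ``An explicit construction of locally supported cocycles in $\ker L_k^\up(X_n)$ whose span has density tending to $\mu(\{0\})$'' is precisely what Section~5.3 of the paper spends the bulk of its effort establishing, via the Morse-type inequality (Proposition~\ref{prop:MorseIneq}), the collapsing operators $R_k^l$ and $S_k^l$ with the counting identity~\eqref{eq:Bettilower}, the continuous local functional $D_k^{(l)}$ pushed through Theorem~\ref{thm:LWLT_HSIRSC} (Lemma~\ref{lem:lowerBetti}), and the Poisson-tree-pruning computation (Lemmas~5.10--5.11, Proposition~\ref{prop:lowerBetti}). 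Notice also that the paper deliberately avoids asserting the exact identity $\mu(\{0\})=h_k(c)$: Remark~\ref{rem:LT_B_HSIRSC} quotes only the upper bound $\E'[\mu_{(\pt_k(c),\tau_o)}(\{0\})]\le h_k(c)$ from~\cite{LP1}, and the equality falls out a posteriori once the combinatorial lower bound $h_k^{(l)}(c)\uparrow h_k(c)$ is established independently. Invoking ``the Poisson tree computation already carried out in~\cite{LP1}'' as if it were a black box for the two-sided atom convergence therefore begs the question the paper is actually answering. The final $L^r$ upgrade you propose (using $\beta_k\le f_k$ and Lemma~\ref{lem:num_faces}) is correct and is how the paper closes.
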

\begin{cor}\label{cor:LT_B_HSIRSC}
Let $k\ge1$ and $c>0$ be fixed, and let $X_n$ be a homogeneous and spatially independent random subcomplex of $\triangle_n$. If $n^{k+1}q_k=\om(1)$ and $nr_{k-1}\sim c$, then for any $r\in[1,\infty)$,
\[
\lim_{n\to\infty}\E\biggl[\biggl|\frac{\b_k(X_n)}{n^{k+1}q_k}-\frac{g_k(c)}{(k+1)!}\biggr|^r\biggr]=0,
\]
where
\[
g_k(c)\coloneqq\frac{k+1}c\biggl(h_{k-1}(c)-\biggl(1-\frac c{k+1}\biggr)\biggr).
\]
\end{cor}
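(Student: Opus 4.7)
The plan is to derive Corollary~\ref{cor:LT_B_HSIRSC} from Theorem~\ref{thm:LT_B_HSIRSC}(1) applied at index $k-1$, combined with the deterministic rank-nullity identity
\[
\b_k(X_n)-\b_{k-1}(X_n)=(f_k(X_n)-f_{k-1}(X_n))+(\rank d_{k-2}-\rank d_k),
\]
which follows from $\b_j(X_n)=f_j(X_n)-\rank d_j-\rank d_{j-1}$ applied at $j=k-1$ and $j=k$.

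Since $n^{k+1}q_k=(nr_{k-1})\cdot n^kq_{k-1}\sim c\cdot n^kq_{k-1}$, the hypothesis $n^{k+1}q_k=\om(1)$ together with $nr_{k-1}\sim c$ yields $n^kq_{k-1}=\om(1)$, so Theorem~\ref{thm:LT_B_HSIRSC}(1) applied at index $k-1$ gives $\b_{k-1}(X_n)/(n^kq_{k-1})\to h_{k-1}(c)/k!$ in $L^r$. Lemma~\ref{lem:num_faces} at indices $k-1$ and $k$ further gives $f_{k-1}(X_n)/(n^kq_{k-1})\to 1/k!$ and $f_k(X_n)/(n^{k+1}q_k)\to 1/(k+1)!$ in $L^r$, hence $(f_k-f_{k-1})/(n^kq_{k-1})\to c/(k+1)!-1/k!$ in $L^r$. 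A short arithmetic check shows $h_{k-1}(c)/k!+c/(k+1)!-1/k!=cg_k(c)/(k+1)!$, which together with $n^{k+1}q_k/(n^kq_{k-1})\to c$ matches the target constant $g_k(c)/(k+1)!$ once the rank defects are shown to be negligible.

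To handle the rank defects, note that Lemma~\ref{lem:qrs}(1) yields $nq_0=\om(1)$, and then Lemma~\ref{lem:qrs}(2) applied at indices $k-1$ and $k$ gives $s_{k-1}^k\le r_{k-1}/q_0=o(1)$ and $s_k^{k+1}\le r_k/q_0=o(1)$. Hence $s_{k-1},s_k=o(1)$, so $nr_{k-2}=nr_{k-1}/s_{k-1}\to\infty$ and $nr_k=nr_{k-1}\cdot s_k\to0$. For $k\ge2$, using $\rank d_{k-2}\le f_{k-2}(X_n)$ with Lemma~\ref{lem:num_faces} at index $k-2$,
\[
\frac{\rank d_{k-2}}{n^kq_{k-1}}\le\frac{1}{nr_{k-2}}\cdot\frac{f_{k-2}(X_n)}{n^{k-1}q_{k-2}}\xrightarrow{L^r}0,
\]
while for $k=1$ the deterministic bound $\rank d_{-1}\le1=o(nq_0)$ suffices. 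For the other defect, $\rank d_k\le f_{k+1}(X_n)$ together with the $L^1$-estimate
\[
\E\biggl[\frac{f_{k+1}(X_n)}{n^kq_{k-1}}\biggr]=(nr_{k-1})(nr_k)\cdot\frac{\binom{n}{k+2}}{n^{k+2}}\longrightarrow0
\]
gives $\rank d_k/(n^kq_{k-1})\to0$ in probability.

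Combining the above, $\b_k(X_n)/(n^{k+1}q_k)\to g_k(c)/(k+1)!$ in probability. To upgrade this to $L^r$-convergence for every $r\ge1$, I will use the deterministic bound $\b_k(X_n)\le f_k(X_n)$: Lemma~\ref{lem:num_faces} shows that $f_k(X_n)/(n^{k+1}q_k)$ is bounded in every $L^r$, so $\b_k(X_n)/(n^{k+1}q_k)$ is as well, and convergence in probability upgrades to $L^r$-convergence by uniform integrability. The main subtlety is the $\rank d_k$ estimate, since a direct $L^r$-bound on $f_{k+1}(X_n)$ via Lemma~\ref{lem:num_faces} would require the additional assumption $n^{k+2}q_{k+1}=\om(1)$, which need not hold in the setting of the corollary; the crude bound $\b_k\le f_k$ combined with the uniform-integrability argument sidesteps this difficulty.
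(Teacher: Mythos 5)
Your proof is correct and follows essentially the same route as the paper's: the identity $\b_k-\b_{k-1}=f_k-f_{k-1}+\rank d_{k-2}-\rank d_k$ is exactly the paper's Eq.~\eqref{eq:corLT_B_HSIRSC1} (since $\dim B^j=\rank d_{j-1}$), the main term is handled by Theorem~\ref{thm:LT_B_HSIRSC}~(1) at index $k-1$ together with Lemma~\ref{lem:num_faces}, the two rank defects are killed by the same first-moment bounds via Lemma~\ref{lem:qrs}, and the final upgrade from convergence in probability to $L^r$ via $\b_k\le f_k$ and Lemma~\ref{lem:num_faces} is identical. The only cosmetic difference is your normalization by $n^kq_{k-1}$ rather than $n^{k+1}q_k$, which amounts to dividing by $nr_{k-1}\sim c$.
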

\begin{rem}
From Eq.~\eqref{eq:h},
\begin{align*}
g_k(c)
&=\max\biggl\{0,(1-t_{k,c})^{k+1}-\frac{k+1}c(1-t_{k,c})+(k+1)t_{k,c}(1-t_{k,c})^k\biggr\}\\
&=\begin{cases}
0																		&(0\le c\le c_k),\\
(1-t_{k,c})^{k+1}-\frac{k+1}c(1-t_{k,c})+(k+1)t_{k,c}(1-t_{k,c})^k>0				&(c>c_k).
\end{cases}
\end{align*}
Therefore, under the assumption of Corollary~\ref{cor:LT_B_HSIRSC}, if $c\le c_k$, then for any $\eps>0$, with probability tending to one as $n\to\infty$, $\b_k(X_n)\le\eps n^{k+1}q_k$ holds.
Meanwhile, if $c>c_k$, then there exists $\eps_1>\eps_0>0$ such that, with probability tending to one as $n\to\infty$, $\eps_1n^{k+1}q_k\ge\b_k(X_n)\ge\eps_0n^{k+1}q_k$ holds.
In particular, $c>c_k$ implies that $H^k(X_n)$ is nontrivial with probability tending to one as $n\to\infty$.
\end{rem}
We apply Theorem~\ref{thm:LT_B_HSIRSC}~(1) and Corollary~\ref{cor:LT_B_HSIRSC} to several typical random simplicial complex models.
\begin{example}[$d$-Linial--Meshulam complex]\label{ex:LT_B_LMC}
Let $d\in\N$ and $c>0$. Consider a $d$-Linial--Meshulam complex $Y_n\sim Y_d(n,p)$ with $p=c/n$. Note that $q_{d-1}=1$, $q_d=p$, and $r_{d-1}=p$ for $n\ge d+1$ using Eq.~\eqref{eq:paraMP} with Example~\ref{ex:dLMC}. Then, we obtain $n^dq_{d-1}=n^d=\om(1)$, $n^{d+1}q_d=n^dc=\om(1)$, and $nr_{d-1}=c$. Therefore, Theorem~\ref{thm:LT_B_HSIRSC}~(1) with $k=d-1$ and Corollary~\ref{cor:LT_B_HSIRSC} with $k=d$ together imply that for any $r\in[1,\infty)$, 
\begin{gather*}
\lim_{n\to\infty}\E\biggl[\biggl|\frac{\b_{d-1}(Y_n)}{n^d}-\frac{h_{d-1}(c)}{d!}\biggr|^r\biggr]=0
\shortintertext{and}
\lim_{n\to\infty}\E\biggl[\biggl|\frac{\b_d(Y_n)}{n^dc}-\frac{g_d(c)}{(d+1)!}\biggr|^r\biggr]=0. 
\end{gather*}
The first equation above implies Theorem~\ref{thm:LT_B_LMC}.
\end{example}
\begin{example}[Random $d$-clique complex]\label{ex:LT_B_RCC}
Let $d\in\N$, $k\ge d-1$, and $c>0$. Consider the random $d$-clique complex $C_n\sim C_d(n,p)$ with $p=(c/n)^{1/\binom{k+1}d}$. Note that $q_k=p^{\binom{k+1}{d+1}}$, $q_{k+1}=p^{\binom{k+2}{d+1}}$, and $r_k=p^{\binom{k+1}d}$ for $n\ge k+2$ using Eq.~\eqref{eq:paraMP} with Example~\ref{ex:RdCC}. Here, $\binom d{d+1}=0$ by convention. Then, we obtain $n^{k+1}q_k=n^{(k+2)d/(d+1)}c^{(k+1-d)/(d+1)}=\om(1)$, $n^{k+2}q_{k+1}=n^{(k+2)d/(d+1)}c^{(k+2)/(d+1)}=\om(1)$, and $nr_k=c$.
Therefore, Theorem~\ref{thm:LT_B_HSIRSC}~(1) and Corollary~\ref{cor:LT_B_HSIRSC} with $k+1$ instead of $k$ together imply that for any $r\in[1,\infty)$, 
\begin{gather*}
\lim_{n\to\infty}\E\biggl[\biggl|\frac{\b_k(C_n)}{n^{(k+2)d/(d+1)}c^{(k+1-d)/(d+1)}}-\frac{h_k(c)}{(k+1)!}\biggr|^r\biggr]=0
\shortintertext{and}
\lim_{n\to\infty}\E\biggl[\biggl|\frac{\b_{k+1}(C_n)}{n^{(k+2)d/(d+1)}c^{(k+2)/(d+1)}}-\frac{g_{k+1}(c)}{(k+2)!}\biggr|^r\biggr]=0.
\end{gather*}
When $d=1$, the first equation above immediately implies Theorem~\ref{thm:LT_B_RCC}.
\end{example}
Before proceeding to the proof, we relate Theorem~\ref{thm:LT_B_HSIRSC}~(1) and Corollary~\ref{cor:LT_B_HSIRSC} to previously obtained results on the asymptotic behavior of Betti numbers in the critical dimension of multi-parameter random simplicial complexes.
Let $X\sim X(n,\p)$ be a multi-parameter random simplicial complex with $\p=(p_0,p_1,\ldots,p_{n-1})$.
As in the previous study~\cite{CF17b}, we write $p_i=n^{-\a_i}$ using a multi-exponent $\a=(\a_i)_{i\ge0}$ with $\a_i\ge0$, and additionally assume that $\a_i$'s do not depend on $n$ for simplicity.
Below, we use some notation in~\cite{CF17b} in a way that is useful for our purpose. For $k\ge0$, define
\[
\tau_k(\a)\coloneqq k+1-\sum_{i=0}^k\binom{k+1}{i+1}\a_i\quad\text{and}\quad\psi_k(\a)\coloneqq\sum_{i=0}^k\binom ki\a_i.
\]
Here, we set $\binom00=1$ by convention. Clearly, $\psi_k(\a)$ is nondecreasing with respect to $k$.
Consider the following disjoint convex domains in the space of multi-exponents:
\[
\mathfrak{D}_k\coloneqq\{\a=(\a_i)_{i\ge0}\mid\psi_k(\a)<1<\psi_{k+1}(\a)\}
\]
for $k\ge0$. When $\a=(\a_i)_{i\ge0}\in\mathfrak{D}_{k_*}$ for some integer $k_*\ge0$, the critical dimension of the multi-parameter random simplicial complex $X$ is said to equal $k_*$. Costa and Farber showed the following \textit{homological domination principle} which states that the Betti number in the critical dimension is significantly larger than any other Betti numbers (see~\cite[Theorem~4.3]{CF17b} for a stronger result).
\begin{theorem}[{\cite[Theorem~4.3]{CF17b}}]\label{thm:HDP}
Let $k\ge0$ be fixed, and let $X\sim X(n,\p)$ be a multi-parameter random simplicial complex with the above setting. If the critical dimension of $X$ equals $k$, that is, $\a\in\mathfrak{D}_k$, then the following $(1)$ and $(2)$ hold.
\begin{enumerate}
\item For any $\eps>0$, with probability tending to one,
\[
(1-\eps)\frac{n^{\tau_k(\a)}}{(k+1)!}\le\b_k(X)\le(1+\eps)\frac{n^{\tau_k(\a)}}{(k+1)!}.
\]
\item There exists $C\ge0$ depending only on $k$, such that for any $j\neq k$, with probability tending to one,
\[
\b_j(X)\le Cn^{-e(\a)}\b_k(X),
\]
where $e(\a)\coloneqq\min_{i\ge0}\{|1-\psi_i(\a)|\}=\min\{1-\psi_k(\a),\psi_{k+1}(\a)-1\}>0$.
\end{enumerate}
\end{theorem}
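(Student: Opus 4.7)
The plan is to combine three ingredients: the weak Morse inequality $\b_j(X)\le f_j(X)$, the Euler--Poincar\'e identity $\sum_j(-1)^j\b_j(X)=\chi(X)=\sum_j(-1)^jf_j(X)$, and concentration of every face number $f_j(X)$ around its mean $\E[f_j(X)]=\binom n{j+1}q_j\sim n^{\tau_j(\a)}/(j+1)!$. The decisive structural fact is that $\tau_j(\a)-\tau_{j-1}(\a)=1-\psi_j(\a)$ and $\psi_j(\a)$ is nondecreasing in $j$, so $(\tau_j(\a))_j$ is concave; the critical-dimension hypothesis $\psi_k(\a)<1<\psi_{k+1}(\a)$ therefore makes $\tau_k(\a)$ the unique strict maximum of the sequence, with $\tau_k(\a)-\tau_j(\a)\ge e(\a)$ for every $j\neq k$.

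\textbf{Concentration of face numbers.} Since $X$ is homogeneous and spatially independent, I would compute the first moment exactly from Eq.~\eqref{eq:paraMP}, obtaining $\E[f_j(X)]\sim n^{\tau_j(\a)}/(j+1)!$. For the second moment, spatial independence (Eq.~\eqref{eq:D0_SI}) and homogeneity yield the clean identity $\P(\sg,\sg'\in X)=q_j^2/q_{i-1}$ for two $j$-simplices $\sg,\sg'$ with $\#(\sg\cap\sg')=i$. Grouping pairs by overlap size and dividing by $(\E[f_j(X)])^2$ gives
\[
\frac{\mathrm{Var}\,f_j(X)}{(\E[f_j(X)])^2}=\sum_{i=1}^{j+1}O\!\left(\frac1{n^iq_{i-1}}\right)=\sum_{i=1}^{j+1}O\!\left(n^{-\tau_{i-1}(\a)}\right).
\]
Concavity of $\tau$ together with $\psi_l(\a)\le\psi_k(\a)<1$ for every $l\le k$ forces $\tau_l(\a)>0$ for $0\le l\le k$, so Chebyshev yields $f_j(X)=(1+o_P(1))n^{\tau_j(\a)}/(j+1)!$ w.h.p.\ whenever $\tau_j(\a)>0$ (this is essentially Lemma~\ref{lem:num_faces} applied at each dimension). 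In the complementary regime $\tau_j(\a)<0$ we have $\E[f_j(X)]\to0$, so Markov's inequality gives $f_j(X)=0$ w.h.p.

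\textbf{Deducing the two assertions.} Part~(2) is then immediate from the weak Morse inequality: $\b_j(X)\le f_j(X)=O(n^{\tau_j(\a)})\le O(n^{\tau_k(\a)-e(\a)})$ w.h.p., and combined with the lower bound from Part~(1) this is $O(n^{-e(\a)})\b_k(X)$. For Part~(1), rearrange Euler--Poincar\'e into
\[
\b_k(X)=(-1)^k\chi(X)-\sum_{j\neq k}(-1)^{j-k}\b_j(X).
\]
Concentration of every $f_j$ together with the domination of $\tau_k$ over the other $\tau_j$ gives $\chi(X)=(-1)^k(1+o_P(1))n^{\tau_k(\a)}/(k+1)!$, while the correction is bounded by $\sum_{j\neq k}\b_j(X)\le\sum_{j\neq k}f_j(X)=O(n^{\tau_k(\a)-e(\a)})=o(n^{\tau_k(\a)})$, sandwiching $\b_k(X)$ around $n^{\tau_k(\a)}/(k+1)!$.

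\textbf{Main obstacle.} The technically delicate step is the variance estimate in the super-critical range $j>k$: there one can have $\tau_{i-1}(\a)\le0$ for large overlap sizes $i$, so the per-overlap bound $n^{-\tau_{i-1}(\a)}$ does not automatically decay. The remedy is to split the sum at the largest $i$ with $\tau_{i-1}(\a)>0$ and absorb the remainder via a direct Markov argument using $\E[f_j(X)]=o(n^{\tau_k(\a)-e(\a)})$ for $j>k$. Keeping this bookkeeping uniform in $j$ so that the accumulated error in the Euler--Poincar\'e rearrangement stays $o(n^{\tau_k(\a)})$ is the only nontrivial piece; everything else is driven by concavity of $\tau_j(\a)$ and Theorem~\ref{thm:cha_HSI} (which provides the FKG-type independence structure underlying the moment computations).
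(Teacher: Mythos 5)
The paper does not supply a proof of this theorem; it is quoted verbatim from Costa and Farber~\cite[Theorem~4.3]{CF17b}, so there is no internal argument to compare against. Your approach for part~(1) --- Euler--Poincar\'e plus the weak Morse inequality plus moment estimates, driven by $\tau_j(\a)-\tau_{j-1}(\a)=1-\psi_j(\a)$ and monotonicity of $\psi_j(\a)$, which yield $\tau_l(\a)>0$ for $0\le l\le k$ and $\tau_j(\a)\le\tau_k(\a)-|j-k|e(\a)$ --- is correct: $\E\bigl[\sum_{j\ne k}f_j(X)\bigr]=O(n^{\tau_k(\a)-e(\a)})$ thanks to the geometric decay in $|j-k|$, Markov's inequality controls $\sum_{j\ne k}f_j(X)$ below $\eps n^{\tau_k(\a)}$ with failure probability $O(n^{-e(\a)}/\eps)$, and Chebyshev concentration of $f_k(X)$ (valid since $\tau_l(\a)>0$ for $l\le k$) closes the two-sided bound on $\b_k(X)$.

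Part~(2) has a gap you do not actually repair. Your ``remedy'' asserts $\E[f_j(X)]=o(n^{\tau_k(\a)-e(\a)})$ for $j>k$, but this fails for $j=k+1$ whenever $\psi_{k+1}(\a)-1\le1-\psi_k(\a)$, a full half of $\mathfrak D_k$: there $e(\a)=\psi_{k+1}(\a)-1$, so $\tau_{k+1}(\a)=\tau_k(\a)-e(\a)$ and $\E[f_{k+1}(X)]=\Theta(n^{\tau_k(\a)-e(\a)})$, leaving Markov with only a constant (not vanishing) failure probability. When $\tau_{k+1}(\a)>0$ the Chebyshev route you use elsewhere rescues this, since the variance ratio $\sum_{i=1}^{k+2}O(n^{-\tau_{i-1}(\a)})$ still decays. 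But the degenerate case $\tau_{k+1}(\a)=0$ does occur inside $\mathfrak D_k$ --- e.g.\ $k=0$, $\a_0\in[0,1)$, $\a_1=2-2\a_0$, giving $\tau_0(\a)=e(\a)=1-\a_0$ and $\tau_1(\a)=0$ --- and there $\E[f_1(X)]$ tends to a positive constant, the Chebyshev ratio is $\Theta(1)$, and $f_1(X)$ is asymptotically Poisson with constant mean, so $\P(f_1(X)>C)$ is bounded away from~$0$ for every fixed $C$. The weak Morse bound $\b_1(X)\le f_1(X)$ therefore cannot give part~(2) in this regime. The conclusion is still true --- the $O_P(1)$ retained edges lie on $\Theta(n^{\tau_0(\a)})$ retained vertices and are pairwise vertex-disjoint with probability $1-o(1)$, so $\b_1(X)=0$ w.h.p.\ --- but seeing that requires a cycle-detection argument (bounding $\dim Z^{k+1}$ by the number of $(k+1)$-simplices lying in some $(k+1)$-cycle, a vanishing-probability event at this density), which the ambient face count $f_{k+1}(X)$ is too coarse to deliver.
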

On the other hand, since
\[
n^{k+1}q_k=n^{k+1}\prod_{i=0}^k p_i^{\binom{k+1}{i+1}}=n^{\tau_k(\a)}
\quad\text{and}\quad
nr_k=n\prod_{i=0}^{k+1}p_i^{\binom{k+1}i}=n^{1-\psi_{k+1}(\a)}
\]
for $0\le k\le n-2$ from Eq.~\eqref{eq:paraMP}, Theorem~\ref{thm:LT_B_HSIRSC}~(1) and Corollary~\ref{cor:LT_B_HSIRSC} can be interpreted as dealing with the asymptotic behavior of the proportion of $\b_k(X)$ to $n^{\tau_k(\a)}$ on hyperplanes
\[
H_{k+1}=\{\a=(\a_i)_{i\ge0}\mid\psi_{k+1}(\a)=1\}
\quad\text{and}\quad
H_k=\{\a=(\a_i)_{i\ge0}\mid\psi_k(\a)=1\},
\]
respectively (cf. Theorem~\ref{thm:HDP}~(1)). These hyperplanes $H_{k+1}$ and $H_k$ together create the boundary of $\mathfrak{D}_k$ in the space of multi-exponents. As seen in Examples~\ref{ex:LT_B_LMC} and~\ref{ex:LT_B_RCC}, the Betti number in one specific dimension does not necessarily dominates the Betti numbers in all other dimensions on such boundaries, unlike the situation in Theorem~\ref{thm:HDP}~(2). Informally speaking, the critical dimension of the multi-parameter random simplicial complex gradually transitions from $k$ to $k+1$ on $H_{k+1}$.
See also~\cite[Sections~8 and~9]{CF17b},~\cite{Fo},~\cite[Section~3]{HK} for the homological properties of multi-parameter random simplicial complexes below or above the critical dimension.
%

\subsection{Proof of Theorem~\ref{thm:LT_B_HSIRSC}~$(2)$ and upper estimate of the Betti number}
Theorem~\ref{thm:LWLT_HSIRSC} is critical for the proof of Theorem~\ref{thm:LT_B_HSIRSC}. Our approach is essentially according to the idea used in~\cite{LP1},~\cite{LP2} for the proof of Theorem~\ref{thm:LT_B_LMC}.
In what follows, we always fix $k\ge0$ and $c>0$, and let $X_n$ be a homogeneous and spatially independent random subcomplex of $\triangle_n$ such that $n^{k+1}q_k=\om(1)$ and $nr_k\sim c$.

For proving Theorem~\ref{thm:LT_B_HSIRSC}~(2), we introduce some additional notation. Let $\cS'_k$ denote the set of all $[X,\tau]\in\cS_k$ such that $L_k^{\up}(X)$ is an essentially self-adjoint operator. It is a fact that, $\P'$-almost surely, $[\pt_k(c),\tau_o]\in\cS'_k$~(see, e.g.,~\cite[Claim~3.3]{LP1}). We define a kernel $M_k\colon\cS'_k\times\cB_\R\to[0,1]$ by $M_k([X,\tau],B)\coloneqq\mu_{(X,\tau)}(B)$ for $[X,\tau]\in\cS'_k$ and $B\in\cB_\R$. In fact, $\cS'_k\ni[X,\tau]\mapsto M_k([X,\tau],\cdot)\in\cP_\R$ is continuous because taking the rooted spectral measure is continuous~(cf.~\cite[Lemma~3.2]{LP1}). The proof of Theorem~\ref{thm:LT_B_HSIRSC} follows from Theorem~\ref{thm:LWLT_HSIRSC} using a map $\times M_k\colon\cP_{\cS'_k}\to\cP_\R$ defined by
\[
(\times M_k)(\nu)\coloneqq\nu M=\int_{\cS'_k}\nu(d[X,\tau])M([X,\tau],\cdot)\quad\text{for }\nu\in\cP_{\cS'_k}. 
\]
It is easy to confirm that the map $\times M_k$ is also continuous. 
\begin{proof}[Proof of Theorem~\ref{thm:LT_B_HSIRSC}~$(2)$]
Given the event $\{\dim X_n\ge k\}$,
\[
\mu_{L_k^{\up}(X_n)}=\frac1{f_k(X_n)}\sum_{\tau\in F_k(X_n)}\mu_{(X_n,\tau)}=\frac1{f_k(X_n)}\sum_{\tau\in F_k(X_n)}M_k(X_n[\tau],\cdot)=\lm_k(X_n)M_k. 
\]
The first identity follows from Eq.~\eqref{eq:SpeMeas}. In the second identity, we use $\mu_{X_n(\tau)}=\mu_{(X_n,\tau)}$ for any $\tau\in F_k(X_n)$. From combining Theorem~\ref{thm:LWLT_HSIRSC} and the continuous mapping theorem, $\lm_k(X_n)M_k$ under $\P(\cdot\mid\dim X_n\ge k)$ converges weakly to $\nu_k(c)M_k$ in distribution as $n\to\infty$. 
\end{proof}
\begin{rem}\label{rem:LT_B_HSIRSC}
From the above proof, the deterministic probability measure $\mu$ in Theorem~\ref{thm:LT_B_HSIRSC}~(2) can be expressed using the $k$-rooted Poisson tree $(\pt_k(c),\tau_o)$:
\[
\mu=\nu_k(c)M_k=\E'[M_k([\pt_k(c),\tau_o],\cdot)]=\E'\bigl[\mu_{(\pt_k(c),\tau_o)}\bigr]. 
\]
From the recursive structure of the $k$-rooted Poisson tree, Linial and Peled~\cite{LP1} provided the following upper estimate of $\E'\bigl[\mu_{(\pt_k(c),\tau_o)}(\{0\})\bigr]$ (see also~\cite[Section~5]{LP2}): 
\begin{align*}
&\E'\bigl[\mu_{(\pt_k(c),\tau_o)}(\{0\})\bigr]\\
&\le\max\left\{t+ct(1-t)^{k+1}-\frac c{k+2}\bigl(1-(1-t)^{k+2}\bigr)\relmiddle| t\in[0,1],t=\exp\bigl(-c(1-t)^{k+1}\bigr)\right\}
=h_k(c).
\end{align*}
\end{rem}
The upper estimate of the Betti number follows immediately from Theorem~\ref{thm:LT_B_HSIRSC}~(2) and Remark~\ref{rem:LT_B_HSIRSC}. 
\begin{prop}\label{prop:UB_B}
Let $\eps>0$ be fixed. Then, 
\[
\lim_{n\to\infty}\P\left(\frac{\b_k(X_n)}{f_k(X_n)}>h_k(c)+\eps\relmiddle|\dim X_n\ge k\right)=0. 
\]
\end{prop}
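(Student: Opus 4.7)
The plan is to combine a Hodge-theoretic inequality with the weak convergence of empirical spectral distributions established in Theorem~\ref{thm:LT_B_HSIRSC}~(2) and the bound $\mu(\{0\})\le h_k(c)$ recalled in Remark~\ref{rem:LT_B_HSIRSC}. The key inequality is
\[
\b_k(X_n)\le\dim\ker L_k^{\up}(X_n)=f_k(X_n)\,\mu_{L_k^{\up}(X_n)}(\{0\}),
\]
valid on $\{\dim X_n\ge k\}$: since $\ker L_k^{\up}(X_n)=Z^k(X_n)\supset B^k(X_n)$, one has $\b_k(X_n)=\dim Z^k(X_n)-\dim B^k(X_n)\le\dim\ker L_k^{\up}(X_n)$, and the latter equals the multiplicity of the zero eigenvalue of $L_k^{\up}(X_n)$, namely $f_k(X_n)\,\mu_{L_k^{\up}(X_n)}(\{0\})$. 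Dividing by $f_k(X_n)$, the proposition reduces to showing
\[
\lim_{n\to\infty}\P\bigl(\mu_{L_k^{\up}(X_n)}(\{0\})>h_k(c)+\eps\mid\dim X_n\ge k\bigr)=0.
\]

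To control this quantity, I would invoke Theorem~\ref{thm:LT_B_HSIRSC}~(2): conditionally on $\{\dim X_n\ge k\}$, the random probability measure $\mu_{L_k^{\up}(X_n)}$ converges weakly in distribution to the deterministic measure $\mu$. Since the limit is deterministic, this yields $\int f\,d\mu_{L_k^{\up}(X_n)}\to\int f\,d\mu$ in conditional probability for every $f\in C_b(\R)$.

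The main obstacle is that $1_{\{0\}}$ is not continuous on $\R$, so the atomic mass at $0$ is not a continuous functional of the spectral measure and weak convergence does not give it directly. The standard portmanteau-type workaround resolves this: given $\eps>0$, by downward continuity of $\mu$ at the singleton $\{0\}$, I choose $\delta>0$ so small that $\mu((-\delta,\delta))\le\mu(\{0\})+\eps/2\le h_k(c)+\eps/2$, where the second inequality is Remark~\ref{rem:LT_B_HSIRSC}. Then I pick a continuous function $f_\delta\colon\R\to[0,1]$ satisfying $1_{\{0\}}\le f_\delta\le 1_{(-\delta,\delta)}$, so that
\[
\mu_{L_k^{\up}(X_n)}(\{0\})\le\int f_\delta\,d\mu_{L_k^{\up}(X_n)}\quad\text{and}\quad\int f_\delta\,d\mu\le h_k(c)+\eps/2.
\]
The conditional convergence in probability $\int f_\delta\,d\mu_{L_k^{\up}(X_n)}\to\int f_\delta\,d\mu$ then forces $\int f_\delta\,d\mu_{L_k^{\up}(X_n)}\le h_k(c)+\eps$ outside a conditional null event, which completes the proof. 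The only delicate point is the choice of $\delta$ using downward continuity of $\mu$ at the atom $\{0\}$; everything else is formal.
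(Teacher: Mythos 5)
Your proposal is correct and follows essentially the same route as the paper: bound $\b_k(X_n)/f_k(X_n)$ by $\mu_{L_k^{\up}(X_n)}(\{0\})$, invoke Theorem~\ref{thm:LT_B_HSIRSC}~(2) together with the bound $\mu(\{0\})\le h_k(c)$ from Remark~\ref{rem:LT_B_HSIRSC}, and exploit that $\nu\mapsto\nu(\{0\})$ is only upper semi-continuous. The only difference is cosmetic: where the paper cites upper semi-continuity of $\nu\mapsto\nu(\{0\})$ as a known fact and applies the portmanteau theorem directly, you unpack that claim by choosing $\delta>0$ with $\mu((-\delta,\delta))\le\mu(\{0\})+\eps/2$ and sandwiching with a continuous $f_\delta$ satisfying $1_{\{0\}}\le f_\delta\le 1_{(-\delta,\delta)}$.
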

\begin{proof}
Given the event $\{\dim X_n\ge k\}$, a simple calculation yields
\[
\mu_{L_k^{\up}(X_n)}(\{0\})=\frac{\dim\bigl(\ker L_k^{\up}(X_n)\bigr)}{f_k(X_n)}=\frac{\dim Z^k(X_n)}{f_k(X_n)}. 
\]
Therefore, from Theorem~\ref{thm:LT_B_HSIRSC}~(2) and Remark~\ref{rem:LT_B_HSIRSC}, we obtain
\begin{align*}
&\limsup_{n\to\infty}\P\left(\frac{\b_k(X_n)}{f_k(X_n)}\ge h_k(c)+\eps\relmiddle|\dim X_n\ge k\right)\\
&\le\limsup_{n\to\infty}\P\left(\frac{\dim Z^k(X_n)}{f_k(X_n)}\ge\E'\bigl[\mu_{(\pt_k(c),\tau_o)}(\{0\})\bigr]+\eps\relmiddle|\dim X_n\ge k\right)\\
&=\limsup_{n\to\infty}\P\bigl(\mu_{L_k^{\up}(X_n)}(\{0\})\ge\E'\bigl[\mu_{(\pt_k(c),\tau_o)}\bigr](\{0\})+\eps\mid\dim X_n\ge k\bigr)\\
&=0.
\end{align*}
In the last line above, we also use the fact that the map $\cP_\R\ni\mu\mapsto\mu(\{0\})\in\R$ is upper semi-continuous. Thus, the conclusion follows. 
\end{proof}

\subsection{Lower estimate of the Betti number}
The following inequality is a simple lower estimate of the Betti number of a given finite simplicial complex.
\begin{prop}[{A version of the Morse inequality}]\label{prop:MorseIneq}
Let $X$ be a finite simplicial complex. Then, it holds that
\[
\b_k(X)\ge f_k(X)-f_{k+1}(X)-f_{k-1}(X). 
\]
\end{prop}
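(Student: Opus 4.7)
The plan is to bound the two ingredients of $\b_k(X) = \dim Z^k(X) - \dim B^k(X)$ separately, using only the rank--nullity theorem applied to the coboundary maps $d_{k-1}$ and $d_k$.

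First I would observe that, by rank--nullity applied to $d_k \colon C^k(X)\to C^{k+1}(X)$,
\[
\dim Z^k(X) = \dim C^k(X) - \rank d_k = f_k(X) - \rank d_k.
\]
Since $\rank d_k \le \dim C^{k+1}(X) = f_{k+1}(X)$, this yields the lower bound $\dim Z^k(X) \ge f_k(X) - f_{k+1}(X)$.

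Next, I would bound the coboundary space: since $B^k(X) = \im d_{k-1}$ is a subspace of $C^k(X)$ arising as the image of $d_{k-1}\colon C^{k-1}(X)\to C^k(X)$, we have $\dim B^k(X) = \rank d_{k-1} \le \dim C^{k-1}(X) = f_{k-1}(X)$.

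Combining the two estimates gives
\[
\b_k(X) = \dim Z^k(X) - \dim B^k(X) \ge (f_k(X) - f_{k+1}(X)) - f_{k-1}(X),
\]
which is the desired inequality. There is no real obstacle here; the only minor care is handling the degenerate cases $k=0$ (where one can interpret $C^{-1}(X)\simeq \R$ so $f_{-1}(X) = 1$, and the bound $\dim B^0(X) \le 1$ still holds) and the case $\dim X < k$ or $\dim X < k+1$ (where the corresponding $f$'s vanish and the relevant coboundary maps are zero, so the estimates hold trivially).
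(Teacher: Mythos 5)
Your proof is correct and is essentially identical to the paper's: the paper's first step, $f_k(X) = \dim Z^k(X) + \dim B^{k+1}(X)$, is exactly rank--nullity for $d_k$ (with $\dim B^{k+1}(X) = \rank d_k$), and the two bounds $\dim B^{k+1}(X)\le f_{k+1}(X)$, $\dim B^k(X)\le f_{k-1}(X)$ are the same as yours. Your remark about $k=0$ matches the paper's conventions ($C^{-1}(X)\simeq\R$, $f_{-1}(X)=1$, so the cohomology is reduced).
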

\begin{proof}
Since $f_k(X)=\dim Z^k(X)+\dim B^{k+1}(X)$, we have
\begin{align*}
\b_k(X)&=\dim Z^k(X)-\dim B^k(X)\\
&=\bigl(f_k(X)-\dim B^{k+1}(X)\bigr)-\dim B^k(X)\\
&\ge f_k(X)-f_{k+1}(X)-f_{k-1}(X). \qedhere
\end{align*}
\end{proof}
The following lemma follows from Proposition~\ref{prop:MorseIneq}.
\begin{lemma}\label{lem:Morselower}
Let $\eps>0$ be fixed. Then, it holds that
\[
\lim_{n\to\infty}\P\left(\frac{\b_k(X_n)}{f_k(X_n)}<1-\frac c{k+2}-\eps\relmiddle|\dim X_n\ge k\right)=0. 
\]
\end{lemma}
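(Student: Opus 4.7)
The plan is to apply Proposition~\ref{prop:MorseIneq}. On the event $\{\dim X_n\ge k\}$ we have $f_k(X_n)\ge 1$, so dividing the Morse inequality by $f_k(X_n)$ gives
\[
\frac{\b_k(X_n)}{f_k(X_n)}\ge 1-\frac{f_{k+1}(X_n)}{f_k(X_n)}-\frac{f_{k-1}(X_n)}{f_k(X_n)}.
\]
It therefore suffices to show that, under $\P(\cdot\mid\dim X_n\ge k)$, the upper ratio $f_{k+1}(X_n)/f_k(X_n)$ converges in probability to $c/(k+2)$ and the lower ratio $f_{k-1}(X_n)/f_k(X_n)$ converges in probability to $0$. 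Since $\P(\dim X_n\ge k)=\P(f_k(X_n)>0)\to 1$ by Lemma~\ref{lem:num_faces}, it is enough to establish these limits unconditionally.

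For the upper ratio, the identity $q_{k+1}=q_kr_k$ combined with $nr_k\sim c$ and $n^{k+1}q_k=\om(1)$ yields $n^{k+2}q_{k+1}=(n^{k+1}q_k)(nr_k)=\om(1)$, so Lemma~\ref{lem:num_faces} applies simultaneously to indices $k$ and $k+1$. Writing
\[
\frac{f_{k+1}(X_n)}{f_k(X_n)}=(nr_k)\cdot\frac{f_{k+1}(X_n)/(n^{k+2}q_{k+1})}{f_k(X_n)/(n^{k+1}q_k)}
\]
and invoking the $L^1$-convergence of the normalized face counts to $1/(k+2)!$ and $1/(k+1)!$ respectively (with the denominator staying bounded away from $0$ with probability tending to $1$), the right-hand side tends in probability to $c\cdot((k+1)!/(k+2)!)=c/(k+2)$.

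For the lower ratio, the case $k=0$ is immediate since $f_{-1}(X_n)\equiv 1$ while $f_0(X_n)\to\infty$ in probability (as $nq_0=\om(1)$ by Lemma~\ref{lem:qrs}~(1)). For $k\ge 1$, Lemma~\ref{lem:qrs}~(1) similarly gives $n^kq_{k-1}=\om(1)$, so Lemma~\ref{lem:num_faces} covers index $k-1$. Moreover, the last assertion of Lemma~\ref{lem:qrs} yields $s_k=o(1)$ under the standing hypotheses, and since $r_{k-1}=r_k/s_k$ we obtain $nr_{k-1}=(nr_k)/s_k\to\infty$. Hence
\[
\frac{f_{k-1}(X_n)}{f_k(X_n)}=\frac{1}{nr_{k-1}}\cdot\frac{f_{k-1}(X_n)/(n^kq_{k-1})}{f_k(X_n)/(n^{k+1}q_k)}\xrightarrow{\P}0.
\]
Combining both limits with the displayed Morse inequality gives the lemma. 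The argument is essentially bookkeeping on top of Lemmas~\ref{lem:num_faces} and~\ref{lem:qrs}, built on the identities $q_{k+1}=q_kr_k$ and $r_k=r_{k-1}s_k$; the only mild subtlety to watch is the boundary case $k=0$, where the generic asymptotic $f_{k-1}(X_n)/(n^kq_{k-1})\to 1/k!$ degenerates because $f_{-1}\equiv 1$, and must be handled by hand.
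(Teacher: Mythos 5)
Your proof is correct and follows essentially the same route as the paper: you invoke the Morse inequality from Proposition~\ref{prop:MorseIneq}, normalize each face count by its expectation (you use $n^jq_{j-1}$ where the paper uses the exact mean $\binom{n}{j}q_{j-1}$, but the asymptotics are identical), and show via Lemmas~\ref{lem:num_faces} and~\ref{lem:qrs} that $f_{k+1}(X_n)/f_k(X_n)\to c/(k+2)$ and $f_{k-1}(X_n)/f_k(X_n)\to 0$ in probability. Your explicit handling of the $k=0$ case via $f_{-1}\equiv 1$ is a nice touch of care that the paper handles implicitly through the convention $q_{-1}=1$, but the underlying argument is the same.
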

\begin{proof}
From Proposition~\ref{prop:MorseIneq}, we have
\begin{align*}
&\P\left(\frac{\b_k(X_n)}{f_k(X_n)}<1-\frac c{k+2}-\eps\relmiddle|\dim X_n\ge k\right)\\
&\le\P\left(\frac{f_{k+1}(X_n)}{f_k(X_n)}-\frac c{k+2}>\eps/2\relmiddle|\dim X_n\ge k\right)+\P\left(\frac{f_{k-1}(X_n)}{f_k(X_n)}>\eps/2\relmiddle|\dim X_n\ge k\right). 
\end{align*}
Now, given the event $\{\dim X_n\ge k\}$, 
\begin{align*}
\frac{f_{k+1}(X_n)}{f_k(X_n)}=\frac{f_{k+1}(X_n)}{\binom n{k+2}q_{k+1}}\frac{\binom n{k+1}q_k}{f_k(X_n)}\frac{\binom n{k+2}r_k}{\binom n{k+1}}
\shortintertext{and}
\frac{f_{k-1}(X_n)}{f_k(X_n)}=\frac{f_{k-1}(X_n)}{\binom nkq_{k-1}}\frac{\binom n{k+1}q_k}{f_k(X_n)}\frac{\binom nk}{\binom n{k+1}r_k}s_k. 
\end{align*}
Note that $n^{k+2}q_{k+1}=\om(1)$ because $n^{k+1}q_k=\om(1)$ and $nr_k\sim c$. Therefore, combining Lemmas~\ref{lem:qrs} and~\ref{lem:num_faces}, we obtain
\begin{align}
\lim_{n\to\infty}\P\left(\biggl|\frac{f_{k+1}(X_n)}{f_k(X_n)}-\frac c{k+2}\biggr|>\eps/2\relmiddle|\dim X_n\ge k\right)&=0\label{eq:ff_1}
\shortintertext{and}
\lim_{n\to\infty}\P\left(\frac{f_{k-1}(X_n)}{f_k(X_n)}>\eps/2\relmiddle|\dim X_n\ge k\right)&=0. \label{eq:ff_2}
\end{align}
These complete the proof. 
\end{proof}

Lemma~\ref{lem:Morselower} gives a simple and useful lower bound on the asymptotic behavior of the $k$th Betti number of $X_n$. However, there is still room for improving the lower bound. To do that, we use the number of (inclusion-wise) maximal $k$-simplices in $X_n$ after some collapsing procedures.
Let $X$ be a simplicial complex. A simplex $\tau$ in $X$ is said to be \textit{free} if there exists a unique maximal simplex $\sg_\tau$ in $X$, strictly containing $\tau$. The removal of all the simplices $\eta$ in $X$ such that $\tau\subset\eta\subset\sg_\tau$ is called a \textit{collapse}. Moreover, when $\dim\sg_\tau=\dim\tau+1$, we call the collapse an \textit{elementary collapse}. We then define a collapsing operator $R_k$ as follows. We first list all the maximal $(k+1)$-simplices $\sg$ in $X$ containing at least one free $k$-simplex and remove those $\sg$'s from $X$ together with an arbitrarily chosen free $k$-dimensional face of $\sg$. We denote the resulting subcomplex of $X$ by $R_k(X)$. Note that $X$ and $R_k(X)$ are homotopy equivalent. We also define $R_k^0(X)\coloneqq X$ and $R_k^{l+1}(X)\coloneqq R_k\bigl(R_k^l(X)\bigr)$ for $l\ge0$. Furthermore, we define $S_k^l(X)$ by removing all the maximal $k$-simplices from $R_k^l(X)$. 

Now, let $l\ge0$ be fixed. When $X$ is finite, we have $f_k\bigl(R_k^l(X)\bigr)=f_k\bigl(S_k^l(X)\bigr)+I_k\bigl(R_k^l(X)\bigr)$, where $I_k\bigl(R_k^l(X)\bigr)$ denotes the number of maximal $k$-simplices of $R_k^l(X)$. In addition,
\[
f_k(X)-f_k\bigl(R_k^l(X)\bigr)=f_{k+1}(X)-f_{k+1}(R_k^l(X))=f_{k+1}(X)-f_{k+1}\bigl(S_k^l(X)\bigr). 
\]
Combining these equations, we obtain
\[
I_k\bigl(R_k^l(X)\bigr)=f_k(X)-f_{k+1}(X)+f_{k+1}\bigl(S_k^l(X)\bigr)-f_k\bigl(S_k^l(X)\bigr). 
\]
Therefore,
\begin{align}\label{eq:Bettilower}
\b_k(X)&=\b_k\bigl(R_k^l(X)\bigr)\nonumber\\
&=\dim Z^k\bigl(R_k^l(X)\bigr)-\dim B^k\bigl(R_k^l(X)\bigr)\nonumber\\
&\ge \dim Z^k\bigl(R_k^l(X)\bigr)-f_{k-1}\bigl(R_k^l(X)\bigr)\nonumber\\
&=\dim Z^k(R_k^l(X))-f_{k-1}(X)\nonumber\\
&\ge I_k\bigl(R_k^l(X)\bigr)-f_{k-1}(X)\nonumber\\
&=f_k(X)-f_{k+1}(X)-f_{k-1}(X)+f_{k+1}\bigl(S_k^l(X)\bigr)-f_k\bigl(S_k^l(X)\bigr)\nonumber\\
&=f_k(X)-f_{k+1}(X)-f_{k-1}(X)+\sum_{\tau\in F_k(X)}1_{\{\tau\in S_k^l(X)\}}\biggl(\frac{\deg\bigl(S_k^l(X);\tau\bigr)}{k+2}-1\biggr). 
\end{align}
The last line follows from a double counting argument:
\[
(k+2)f_{k+1}\bigl(S_k^l(X)\bigr)=\sum_{\tau\in F_k\bigl(S_k^l(X)\bigr)}\deg\bigl(S_k^l(X);\tau\bigr).
\]
We now define a map $D_k^{(l)}\colon\cS_k\to\R$ by
\[
D_k^{(l)}([X,\tau])\coloneqq1_{\{\tau\in S_k^l(X)\}}\biggl(\frac{\deg\bigl(S_k^l(X);\tau\bigr)}{k+2}-1\biggr). 
\]
Suppose that $([Y_n,\tau_n])_{n=1}^\infty$ is a convergent sequence to $[Y,\tau]$ in $\cS_k$. By the definition of the local distance, there exists $N\in\N$ such that $n\ge N$ implies that $(Y_n,\tau_n)_{l+1}\simeq(Y,\tau)_{l+1}$. Then, we have $D_k^{(l)}([Y_n,\tau_n])=D_k^{(l)}([Y,\tau])$ for $n\ge N$ because the map $D_k^{(l)}$ depends on only the simplices of distance at most $l+1$ from the root. This means that $D_k^{(l)}$ is continuous. The following lower bound on the Betti number follows from Theorem~\ref{thm:LWLT_HSIRSC} using a map $\times D_k^{(l)}\colon\cP_{\cS_k}\to\R\cup\{\infty\}$ defined by
\[
\bigl(\times D_k^{(l)}\bigr)(\nu)\coloneqq\nu D_k^{(l)}=\int_{\cS_k}\nu(d[X,\tau])D_k^{(l)}([X,\tau])\quad\text{for }\nu\in\cP_{\cS_k}. 
\]
It is easy to confirm that the map $\times D_k^{(l)}$ is lower semi-continuous.
\begin{lemma}\label{lem:lowerBetti}
Let $l\ge0$ and $\eps>0$ be fixed. Then, it holds that
\[
\lim_{n\to\infty}\P\left(\frac{\b_k(X_n)}{f_k(X_n)}<1-\frac c{k+2}+\E'\bigl[D_k^{(l)}([\pt_k(c),\tau_o])\bigr]-\eps\relmiddle|\dim X_n\ge k\right)=0. 
\]
\end{lemma}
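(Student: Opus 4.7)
The plan is to translate the deterministic Morse-type inequality~\eqref{eq:Bettilower} into a probabilistic statement via Theorem~\ref{thm:LWLT_HSIRSC}. Dividing both sides of~\eqref{eq:Bettilower} applied to $X=X_n$ by $f_k(X_n)$, which is positive on $\{\dim X_n\ge k\}$, gives
\[
\frac{\b_k(X_n)}{f_k(X_n)}\ge 1-\frac{f_{k+1}(X_n)}{f_k(X_n)}-\frac{f_{k-1}(X_n)}{f_k(X_n)}+\lm_k(X_n)D_k^{(l)},
\]
upon recognizing $\sum_{\tau\in F_k(X_n)}D_k^{(l)}(X_n[\tau])/f_k(X_n)=\lm_k(X_n)D_k^{(l)}$ from the definition of $\lm_k(X_n)$. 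It therefore suffices to show that each of the three correction terms lies in a favorable range with conditional probability tending to one: $f_{k+1}(X_n)/f_k(X_n)\le c/(k+2)+\eps/3$, $f_{k-1}(X_n)/f_k(X_n)\le\eps/3$, and $\lm_k(X_n)D_k^{(l)}\ge\E'[D_k^{(l)}([\pt_k(c),\tau_o])]-\eps/3$.

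The first two controls are exactly the assertions behind Eqs.~\eqref{eq:ff_1} and~\eqref{eq:ff_2} established in the proof of Lemma~\ref{lem:Morselower}. For the third, I would exploit the lower semi-continuity of $\times D_k^{(l)}$ already noted just before the statement of this lemma: the set
\[
U\coloneqq\bigl\{\nu\in\cP_{\cS_k}\mid\nu D_k^{(l)}>\E'[D_k^{(l)}([\pt_k(c),\tau_o])]-\eps/3\bigr\}
\]
is open in $\cP_{\cS_k}$ and contains $\nu_k(c)$, so Theorem~\ref{thm:LWLT_HSIRSC} yields $\P(\lm_k(X_n)\in U\mid\dim X_n\ge k)\to 1$. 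Combining the three controls with a union bound and collecting the $\eps/3$'s produces the claimed inequality with conditional probability tending to one.

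The one technical point worth double-checking along the way is the finiteness of $\E'[D_k^{(l)}([\pt_k(c),\tau_o])]$, since otherwise the right-hand side of the lemma would not be well defined and $U$ would coincide trivially with $\cP_{\cS_k}$. This is a routine moment estimate: $|D_k^{(l)}|$ is dominated by $1+\deg(\pt_k(c);\tau_o)/(k+2)$, and the degree of $\tau_o$ in $\pt_k(c)$ is Poisson with mean $c$; more generally, every moment of the size of the ball of any fixed radius around $\tau_o$ in $\pt_k(c)$ is finite because each offspring variable along the tree is Poisson with parameter $c$. Once this integrability is in place, no further technicalities arise, and the argument above delivers the conclusion.
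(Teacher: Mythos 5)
Your proposal matches the paper's proof of Lemma~\ref{lem:lowerBetti} in both structure and substance: divide Eq.~\eqref{eq:Bettilower} by $f_k(X_n)$ to get the pointwise bound $\b_k(X_n)/f_k(X_n)\ge 1-f_{k+1}(X_n)/f_k(X_n)-f_{k-1}(X_n)/f_k(X_n)+\lm_k(X_n)D_k^{(l)}$, control the first two ratio terms via Eqs.~\eqref{eq:ff_1} and~\eqref{eq:ff_2} from Lemma~\ref{lem:Morselower}, and control the last term via Theorem~\ref{thm:LWLT_HSIRSC} together with the lower semi-continuity of $\times D_k^{(l)}$. Your extra remark on the finiteness of $\E'[D_k^{(l)}([\pt_k(c),\tau_o])]$ (which the paper leaves implicit) is correct and resolved by the Poisson offspring estimate you give, so the argument is complete and correct.
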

\begin{proof}
From Eq.~\eqref{eq:Bettilower}, given the event $\{\dim X_n\ge k\}$, 
\begin{align}\label{eq:eq:lowerBetti}
\frac{\b_k(X_n)}{f_k(X_n)}
&\ge1-\frac{f_{k+1}(X_n)}{f_k(X_n)}-\frac{f_{k-1}(X_n)}{f_k(X_n)}+\frac1{f_k(X_n)}\sum_{\tau\in F_k(X_n)}1_{\{\tau\in S_k^l(X_n)\}}\biggl(\frac{\deg\bigl(S_k^l(X_n);\tau\bigr)}{k+2}-1\biggr)\nonumber\\
&=1-\frac{f_{k+1}(X_n)}{f_k(X_n)}-\frac{f_{k-1}(X_n)}{f_k(X_n)}+\frac1{f_k(X_n)}\sum_{\tau\in F_k(X_n)}D_k^{(l)}(X_n[\tau])\nonumber\\
&=1-\frac{f_{k+1}(X_n)}{f_k(X_n)}-\frac{f_{k-1}(X_n)}{f_k(X_n)}+\lm_k(X_n)D_k^{(l)}. 
\end{align}
From Theorem~\ref{thm:LWLT_HSIRSC} and the lower semi-continuity of the map $\times D_k^{(l)}$, we have
\begin{align}\label{eq:LB_B}
&\limsup_{n\to\infty}\P\bigl(\lm_k(X_n)D_k^{(l)}\le\E'\bigl[D_k^{(l)}([\pt_k(c),\tau_o])\bigr]-\eps\mid\dim X_n\ge k\bigr)\nonumber\\
&=\limsup_{n\to\infty}\P\bigl(\lm_k(X_n)D_k^{(l)}\le\nu_k(c)D_k^{(l)}-\eps\mid\dim X_n\ge k\bigr)\nonumber\\
&=0. 
\end{align}
Thus, the conclusion follows from Eqs.~\eqref{eq:ff_1},~\eqref{eq:ff_2},~\eqref{eq:eq:lowerBetti}, and~\eqref{eq:LB_B} in the same manner as the proof of Lemma~\ref{lem:Morselower}. 
\end{proof}

We now give an overview of the estimate of $\E'\bigl[D_k^{(l)}([\pt_k(c),\tau_o])\bigr]$ as described in~\cite[Section~3]{ALLM}. To provide the estimate, we introduce the concept of \textit{$k$-rooted tree pruning}. For a $k$-rooted tree $(T,\tau)$, we define the pruning $Q_k((T,\tau))$ as below. Initially, let $\{\tau_1,\tau_2,\ldots\}$ be the set of all the free $k$-simplices in $T$ that are distinct from $\tau$, and we take the unique simplex $\sg_i\in F_{k+1}(T)$ containing $\tau_i$. We then define $\tilde Q_k(T)$ as a simplicial complex obtained from $T$ by removing all the simplices $\tau_j$ and $\sg_j$~($j=1,2,\ldots$). Finally, define $Q_k((T,\tau))$ as the $k$-rooted tree $\bigl(\tilde Q_k(T)\bigr)(\tau)$. Furthermore, we define $Q_k^0((T,\tau))\coloneqq(T,\tau)$ and $Q_k^{l+1}((T,\tau))\coloneqq Q_k\bigl(Q_k^l((T,\tau))\bigr)$ for $l\ge0$. 
A straightforward calculation gives the following lemma.
\begin{lemma}[{\cite[Lemma~3.3]{LP2}}]
Let $\bigl(t_{k+1,c}^{(l)}\bigr)_{l\ge-1}$ be a sequence defined by
\[
t_{k+1,c}^{(-1)}=0\quad\text{and}\quad t_{k+1,c}^{(l+1)}=\exp\bigl(-c\bigl(1-t_{k+1,c}^{(l)}\bigr)^{k+1}\bigr)\quad\text{for }l\ge-1.
\]
Furthermore, set $\dl_{k+1,c}^{(l)}\coloneqq\deg\bigl(Q_k^l((\pt_k(c),\tau_o));\tau_o\bigr)$. Then, $\dl_{k+1,c}^{(l)}$ follows the Poisson distribution with parameter $c\bigl(1-t_{k+1,c}^{(l-1)}\bigr)^{k+1}$ for every $l\ge0$. 
\end{lemma}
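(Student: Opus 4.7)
The plan is to prove the lemma by strong induction on $l\ge0$, jointly establishing the distributional identity and the auxiliary identity
\[
u^{(l)}\coloneqq\P'\bigl(\text{all $(k+1)$-simplices of $Q_k^l((\pt_k(c),\tau_o))$ containing $\tau_o$ are removed}\bigr)=t_{k+1,c}^{(l)}.
\]
The base case $l=0$ is immediate: $\dl_{k+1,c}^{(0)}=m(\tau_o)\sim\po_c=\po_{c(1-t_{k+1,c}^{(-1)})^{k+1}}$ by construction of the $k$-rooted Poisson tree, and $u^{(0)}=\P'(m(\tau_o)=0)=e^{-c}=t_{k+1,c}^{(0)}$. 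For the inductive step, the plan is to exploit the self-similar structure of the Poisson tree: each of the $k+1$ non-root faces $\tau'_{i,j}$ ($j=1,\dots,k+1$) of a child $(k+1)$-simplex $\sigma_i$ of $\tau_o$ is the root of an independent copy of $(\pt_k(c),\tau_o)$, which I denote by $T_{\tau'_{i,j}}$.

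The central step is to establish the equivalence
\[
\bigl\{\sigma_i\in Q_k^l((\pt_k(c),\tau_o))\bigr\}=\bigcap_{j=1}^{k+1}\bigl\{\text{at least one child of }\tau'_{i,j}\text{ lies in }Q_k^{l-1}(T_{\tau'_{i,j}})\bigr\}.
\]
One direction is direct: if $\sigma_i$ survives $l$ pruning steps, then at no step can any face $\tau'_{i,j}$ be free, so each $\tau'_{i,j}$ has a surviving child $(k+1)$-simplex other than $\sigma_i$ throughout the process. The converse is proved by inducting on the step index $s\le l$ and invoking the inductive hypothesis $u^{(l-1)}=t_{k+1,c}^{(l-1)}$ on each subtree. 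Once the equivalence is in place, the $k+1$ events on the right are independent with common probability $1-u^{(l-1)}=1-t_{k+1,c}^{(l-1)}$, so $\P'(\sigma_i\in Q_k^l)=(1-t_{k+1,c}^{(l-1)})^{k+1}$. Since the subtrees below distinct children $\sigma_1,\dots,\sigma_{m(\tau_o)}$ of $\tau_o$ are mutually independent given $m(\tau_o)\sim\po_c$, Poisson thinning delivers $\dl_{k+1,c}^{(l)}\sim\po_{c(1-t_{k+1,c}^{(l-1)})^{k+1}}$, and then $u^{(l)}=\P'(\dl_{k+1,c}^{(l)}=0)=\exp(-c(1-t_{k+1,c}^{(l-1)})^{k+1})=t_{k+1,c}^{(l)}$, closing the induction.

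The main obstacle lies in justifying the coupling that underlies this equivalence. A priori, the pruning of the ambient tree $(\pt_k(c),\tau_o)$ restricted to the subtree $T_{\tau'_{i,j}}$ need not coincide with the isolated pruning of $T_{\tau'_{i,j}}$ viewed with $\tau'_{i,j}$ as its own root, because $\tau'_{i,j}$ is not a root of the ambient tree and could in principle become free there, triggering the removal of some child of $\tau'_{i,j}$ that would not occur in the isolated pruning. However, the only way $\tau'_{i,j}$ can become free in the ambient pruning is if $\sigma_i$ has already been removed, or if $\sigma_i$ together with all other children of $\tau'_{i,j}$ have already been removed; both scenarios are incompatible with the event $\{\sigma_i\in Q_k^l\}$. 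Consequently, on this event the two restricted prunings agree through step $l-1$, which secures the independence required for Poisson thinning and validates the reduction to the $k+1$ independent subtrees.
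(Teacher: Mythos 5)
The paper itself does not prove this lemma; it is quoted verbatim from Linial and Peled with the remark that ``a straightforward calculation gives the following lemma,'' so there is no internal proof to compare against. Your derivation is correct and is the natural recursive argument: by the self-similar Poisson-tree structure, each non-root face $\tau'_{i,j}$ of a child $\sigma_i$ of $\tau_o$ carries an independent copy of $(\pt_k(c),\tau_o)$, the event $\{\sigma_i\in Q_k^l\}$ is equivalent to each $\tau'_{i,j}$ retaining at least one child after $l-1$ prunings of its own subtree, and Poisson thinning of $m(\tau_o)\sim\po_c$ by the retention probability $(1-t_{k+1,c}^{(l-1)})^{k+1}$ gives the claimed law. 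Your justification of the coupling (that the ambient pruning restricted to the subtree below $\tau'_{i,j}$ coincides with the standalone pruning as long as $\sigma_i$ survives) is the right point to worry about, and your reasoning for it is sound: $\tau'_{i,j}$ can only become free in the ambient pruning once $\sigma_i$ has been or is simultaneously being deleted, which is excluded on the event $\{\sigma_i\in Q_k^l\}$.

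One small inaccuracy worth correcting: you say the converse of the central equivalence is ``proved by \ldots invoking the inductive hypothesis $u^{(l-1)}=t_{k+1,c}^{(l-1)}$ on each subtree.'' The equivalence
\[
\{\sigma_i\in Q_k^l\}=\bigcap_{j=1}^{k+1}\bigl\{\deg\bigl(Q_k^{l-1}(T_{\tau'_{i,j}});\tau'_{i,j}\bigr)\ge1\bigr\}
\]
is a deterministic, combinatorial statement about any fixed realization of the tree; both directions follow from the coupling argument and a double induction on the step index $s$ (jointly proving that $\sigma_i\in Q_k^s$ and that the ambient and standalone prunings of the subtree below each $\tau'_{i,j}$ agree through step $s$), and neither direction uses any probability. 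The inductive hypothesis, namely that $\dl_{k+1,c}^{(l-1)}\sim\po_{c(1-t_{k+1,c}^{(l-2)})^{k+1}}$ and hence $\P'(\dl_{k+1,c}^{(l-1)}=0)=t_{k+1,c}^{(l-1)}$, is only invoked afterwards, in computing the retention probability $(1-t_{k+1,c}^{(l-1)})^{k+1}$ that feeds into the Poisson thinning step. With that bookkeeping point fixed, the proof is complete.
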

The following lemma gives a lower estimate of $\E'\bigl[D_k^{(l)}([\pt_k(c),\tau_o])\bigr]$ using the values $\bigl(t_{k+1,c}^{(l)}\bigr)_{l\ge-1}$.
\begin{lemma}[{\cite[Section~5.2]{LP2}}]\label{lem:lower}
For $l\ge1$, 
\begin{gather*}
\P'\bigl(\tau_o\in S_k^l(\pt_k(c))\bigr)
\le1-t_{k+1,c}^{(l-1)}-c\bigl(1-t_{k+1,c}^{(l-2)}\bigr)^{k+1}t_{k+1,c}^{(l-1)}
\shortintertext{and}
\E'\bigl[\deg\bigl(S_k^l(\pt_k(c));\tau_o\bigr);\tau_o\in S_k^l(\pt_k(c))\bigr]
\ge c\bigl(1-t_{k+1,c}^{(l-1)}\bigr)^{k+1}\bigl(1-t_{k+1,c}^{(l)}\bigr). 
\end{gather*}
In particular, 
\[
\E'\bigl[D_k^{(l)}([\pt_k(c),\tau_o])\bigr]\ge\frac c{k+2}\bigl(1-t_{k+1,c}^{(l-1)}\bigr)^{k+1}\bigl(1-t_{k+1,c}^{(l)}\bigr)-1+t_{k+1,c}^{(l-1)}+c\bigl(1-t_{k+1,c}^{(l-2)}\bigr)^{k+1}t_{k+1,c}^{(l-1)}. 
\]
\end{lemma}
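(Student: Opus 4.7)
The plan is to establish the two displayed inequalities and observe that combining them with the identity
\[
\E'\bigl[D_k^{(l)}([\pt_k(c),\tau_o])\bigr]=\frac{1}{k+2}\E'\bigl[\deg\bigl(S_k^l(\pt_k(c));\tau_o\bigr);\tau_o\in S_k^l(\pt_k(c))\bigr]-\P'\bigl(\tau_o\in S_k^l(\pt_k(c))\bigr),
\]
which follows from the definition of $D_k^{(l)}$ by linearity of expectation, will immediately yield the ``In particular'' conclusion. The backbone of the argument is a coupling between the collapsing operator $R_k$ applied to $\pt_k(c)$ and the rooted-tree pruning $Q_k$ applied to $(\pt_k(c),\tau_o)$. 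Both operators remove a $(k+1)$-simplex exactly when it carries a free non-root $k$-face, and although $R_k$ additionally excises one arbitrarily chosen free $k$-face per pruned $(k+1)$-simplex, such a face is already isolated from every other $(k+1)$-simplex after its parent is deleted, so the choice never cascades. The only genuine discrepancy occurs at the root: $R_k$ also prunes the unique $(k+1)$-simplex containing $\tau_o$ whenever $\tau_o$ becomes free---that is, whenever $\deg(R_k^{j-1}(\pt_k(c));\tau_o)=1$---while $Q_k$ never does. Inducting on $l$, I plan to establish the dichotomy that $\deg(R_k^l(\pt_k(c));\tau_o)$ equals $\delta_{k+1,c}^{(l)}$ if $\delta_{k+1,c}^{(l-1)}\ge 2$ and equals $0$ otherwise; since removing maximal $k$-simplices leaves the $(k+1)$-simplex structure untouched, this will translate into the clean characterization
\[
\{\tau_o\in S_k^l(\pt_k(c))\}=\{\delta_{k+1,c}^{(l-1)}\ge 2\}\cap\{\delta_{k+1,c}^{(l)}\ge 1\},
\]
with $\deg(S_k^l(\pt_k(c));\tau_o)=\delta_{k+1,c}^{(l)}$ on this event.

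Granted this characterization, both inequalities reduce to short Poisson computations. For the first, I will bound $\P'(\tau_o\in S_k^l(\pt_k(c)))\le\P'(\delta_{k+1,c}^{(l-1)}\ge 2)$ and use $\delta_{k+1,c}^{(l-1)}\sim\po_{c(1-t_{k+1,c}^{(l-2)})^{k+1}}$ together with $t_{k+1,c}^{(l-1)}=\exp(-c(1-t_{k+1,c}^{(l-2)})^{k+1})$ to compute $\P'(\delta_{k+1,c}^{(l-1)}=0)+\P'(\delta_{k+1,c}^{(l-1)}=1)=t_{k+1,c}^{(l-1)}+c(1-t_{k+1,c}^{(l-2)})^{k+1}t_{k+1,c}^{(l-1)}$. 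For the second, since $\delta_{k+1,c}^{(l)}\cdot 1_{\{\delta_{k+1,c}^{(l)}=0\}}=0$, the redundant indicator on $\{\delta_{k+1,c}^{(l)}\ge 1\}$ drops out and the target becomes a lower bound for $\E'\bigl[\delta_{k+1,c}^{(l)}\,1_{\{\delta_{k+1,c}^{(l-1)}\ge 2\}}\bigr]$. By nested Poisson thinning, the conditional law $\delta_{k+1,c}^{(l)}\mid\delta_{k+1,c}^{(l-1)}=n$ is $\bin\bigl(n,(1-t_{k+1,c}^{(l-1)})^{k+1}/(1-t_{k+1,c}^{(l-2)})^{k+1}\bigr)$, and a direct calculation then yields the exact value $c(1-t_{k+1,c}^{(l-1)})^{k+1}(1-t_{k+1,c}^{(l-1)})$; this dominates the asserted bound since the sequence $\bigl(t_{k+1,c}^{(l)}\bigr)_l$ is nondecreasing.

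The main obstacle I anticipate is the inductive verification of the dichotomy. The crucial point is that, as long as $\tau_o$ has never been free under the $R_k$-evolution up to time $l-1$, the set of $(k+1)$-simplices containing $\tau_o$ in $R_k^l(\pt_k(c))$ coincides with that in $Q_k^l((\pt_k(c),\tau_o))$; this in turn requires checking that the freeness of any non-root child $k$-face $\rho$ of such a simplex is determined entirely by $\rho$'s own subtree, which lies outside the root's neighborhood and is therefore impervious to both the root-specific pruning and the arbitrary choice of excised free $k$-faces. Once that bookkeeping is in place, the remaining work is the routine Poisson moment calculation sketched above.
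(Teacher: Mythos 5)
Your proposal is correct and follows the same basic strategy as the paper---reduce to the coupled degree process $\dl_{k+1,c}^{(l)}$ and then compute with Poisson tails---but the second inequality is handled by a genuinely different computation, which is worth noting. The paper avoids the joint law of $\bigl(\dl_{k+1,c}^{(l-1)},\dl_{k+1,c}^{(l)}\bigr)$ entirely: it only records the one-sided facts $\{\tau_o\in S_k^l(\pt_k(c))\}\subset\{\dl_{k+1,c}^{(l-1)}\ge 2\}$ and that, on $\{\dl_{k+1,c}^{(l)}\ge 2\}$, one has $1_{\{\tau_o\in S_k^l\}}\deg(S_k^l;\tau_o)=\dl_{k+1,c}^{(l)}$. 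It then truncates on $\{\dl_{k+1,c}^{(l)}\ge 2\}$, so the lower estimate is just the single Poisson moment $\E'\bigl[\dl_{k+1,c}^{(l)};\dl_{k+1,c}^{(l)}\ge 2\bigr]=c(1-t_{k+1,c}^{(l-1)})^{k+1}(1-t_{k+1,c}^{(l)})$. You instead nail down the exact event identity $\{\tau_o\in S_k^l\}=\{\dl_{k+1,c}^{(l-1)}\ge2\}\cap\{\dl_{k+1,c}^{(l)}\ge1\}$ together with $\deg(S_k^l;\tau_o)=\dl_{k+1,c}^{(l)}$ there, truncate on the weaker event $\{\dl_{k+1,c}^{(l-1)}\ge2\}$, and then need the Poisson thinning kernel $\dl_{k+1,c}^{(l)}\mid\dl_{k+1,c}^{(l-1)}=n\sim\bin\bigl(n,(1-t_{k+1,c}^{(l-1)})^{k+1}/(1-t_{k+1,c}^{(l-2)})^{k+1}\bigr)$, arriving at the exact value $c(1-t_{k+1,c}^{(l-1)})^{k+2}$. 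Both computations check out, and yours is in fact slightly stronger because $(t_{k+1,c}^{(l)})_l$ is nondecreasing, but it buys that improvement at the cost of establishing the full characterization of $\{\tau_o\in S_k^l\}$ and the thinning structure of the pruned degrees; the paper's truncation on $\{\dl_{k+1,c}^{(l)}\ge2\}$ sidesteps both of these (it only needs the marginal Poisson law of $\dl_{k+1,c}^{(l)}$ and the implication $\dl_{k+1,c}^{(l)}\ge2\Rightarrow\dl_{k+1,c}^{(l-1)}\ge2$). Your remaining work---the inductive bookkeeping showing that, while $\tau_o$ has never been free, the root neighborhood of $R_k^l(\pt_k(c))$ matches that of $Q_k^l((\pt_k(c),\tau_o))$---is exactly the (uncited but needed) content behind the paper's two one-sided claims as well, so neither argument escapes it; you simply make it explicit.
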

\begin{proof}
Since $\bigl\{\tau_o\in S_k^l(\pt_k(c))\bigr\}\subset\bigl\{\dl_{k+1,c}^{(l-1)}\ge2\bigr\}$, we have
\begin{align*}
\P'\bigl(\tau_o\in S_k^l(\pt_k(c))\bigr)
&\le\P'\bigl(\dl_{k+1,c}^{(l-1)}\ge2\bigr)\\
&=1-\exp\bigl(-c\bigl(1-t_{k+1,c}^{(l-2)}\bigr)^{k+1}\bigr)-c\bigl(1-t_{k+1,c}^{(l-2)}\bigr)^{k+1}\exp\bigl(-c\bigl(1-t_{k+1,c}^{(l-2)}\bigr)^{k+1}\bigr)\\
&=1-t_{k+1,c}^{(l-1)}-c\bigl(1-t_{k+1,c}^{(l-2)}\bigr)^{k+1}t_{k+1,c}^{(l-1)}. 
\end{align*}
Furthermore, we note that given the event $\bigl\{\dl_{k+1,c}^{(l)}\ge2\bigr\}$, 
\[
1_{\{\tau_o\in S_k^l(\pt_k(c))\}}\deg\bigl(S_k^l(\pt_k(c));\tau_o\bigr)=\dl_{k+1,c}^{(l)}.
\]
Therefore,
\begin{align*}
&\E'\bigl[\deg\bigl(S_k^l(\pt_k(c));\tau_o\bigr);\tau_o\in S_k^l(\pt_k(c))\bigr]\\
&\ge\E'\bigl[1_{\{\tau_o\in S_k^l(\pt_k(c))\}}\deg\bigl(S_k^l(\pt_k(c));\tau_o\bigr);\dl_{k+1,c}^{(l)}\ge2\bigr]\\
&=\E'\bigl[\dl_{k+1,c}^{(l)};\dl_{k+1,c}^{(l)}\ge2\bigr]\\
&=c\bigl(1-t_{k+1,c}^{(l-1)}\bigr)^{k+1}-c\bigl(1-t_{k+1,c}^{(l-1)}\bigr)^{k+1}\exp\bigl(-c\bigl(1-t_{k+1,c}^{(l-1)}\bigr)^{k+1}\bigr)\\
&=c\bigl(1-t_{k+1,c}^{(l-1)}\bigr)^{k+1}\bigl(1-t_{k+1,c}^{(l)}\bigr). \qedhere
\end{align*}
\end{proof}
Now, we define
\[
h_k^{(l)}(c)\coloneqq\max\biggl\{1-\frac c{k+2},t_{k+1,c}^{(l-1)}+ct_{k+1,c}^{(l-1)}\bigl(1-t_{k+1,c}^{(l-2)}\bigr)^{k+1}-\frac c{k+2}\bigl(1-\bigl(1-t_{k+1,c}^{(l-1)}\bigr)^{k+1}\bigl(1-t_{k+1,c}^{(l)}\bigr)\bigr)\biggr\}
\]
for $l\ge1$. Then, Lemma~\ref{lem:lower} implies that
\[
1-\frac c{k+2}+\bigl(0\vee\E'\bigl[D_k^{(l)}([\pt_k(c),\tau_o])\bigr]\bigr)\ge h_k^{(l)}(c)\quad\text{for }l\ge1.
\]
Therefore, the following proposition follows immediately from Lemmas~\ref{lem:Morselower} and~\ref{lem:lowerBetti}.
\begin{prop}\label{prop:lowerBetti}
Let $l\ge1$ and $\eps>0$ be fixed. Then, it holds that
\[
\lim_{n\to\infty}\P\left(\frac{\b_k(X_n)}{f_k(X_n)}<h_k^{(l)}(c)-\eps\relmiddle|\dim X_n\ge k\right)=0. 
\]
\end{prop}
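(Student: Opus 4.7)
The proof will simply combine the two lower bounds already established, namely Lemma~\ref{lem:Morselower} and Lemma~\ref{lem:lowerBetti}, together with the explicit estimate from Lemma~\ref{lem:lower}. Since $h_k^{(l)}(c)$ is defined as the maximum of two quantities, it suffices to show that each of these two quantities is a valid lower bound (up to $\eps$) with probability tending to $1$, and then apply a union bound.

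First, I would use Lemma~\ref{lem:Morselower} (applied with $\eps$ itself in place of $\eps/2$) to conclude that
\[
\lim_{n\to\infty}\P\left(\frac{\b_k(X_n)}{f_k(X_n)}<1-\frac c{k+2}-\eps\relmiddle|\dim X_n\ge k\right)=0,
\]
which handles the first entry in the maximum defining $h_k^{(l)}(c)$.

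Next, I would apply Lemma~\ref{lem:lowerBetti} for the given $l\ge 1$. Plugging the lower estimate of $\E'\bigl[D_k^{(l)}([\pt_k(c),\tau_o])\bigr]$ from Lemma~\ref{lem:lower} into that bound, the quantity $1-c/(k+2)+\E'\bigl[D_k^{(l)}([\pt_k(c),\tau_o])\bigr]$ is at least
\[
t_{k+1,c}^{(l-1)}+ct_{k+1,c}^{(l-1)}\bigl(1-t_{k+1,c}^{(l-2)}\bigr)^{k+1}-\frac c{k+2}\bigl(1-\bigl(1-t_{k+1,c}^{(l-1)}\bigr)^{k+1}\bigl(1-t_{k+1,c}^{(l)}\bigr)\bigr),
\]
which is precisely the second entry in the maximum defining $h_k^{(l)}(c)$. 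Thus
\[
\lim_{n\to\infty}\P\left(\frac{\b_k(X_n)}{f_k(X_n)}<\text{(second entry)}-\eps\relmiddle|\dim X_n\ge k\right)=0.
\]

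Finally, since $\{\b_k(X_n)/f_k(X_n)<h_k^{(l)}(c)-\eps\}$ is contained in the intersection of the two events above (because if the ratio falls below the maximum minus $\eps$, it falls below each entry minus $\eps$), a union bound gives the desired conclusion. The main technical input was Lemma~\ref{lem:lowerBetti}, whose proof required invoking the local weak limit theorem (Theorem~\ref{thm:LWLT_HSIRSC}) together with the lower semi-continuity of $\times D_k^{(l)}$; here no further obstacle arises, since the estimates on the Poisson tree in Lemma~\ref{lem:lower} are already in the form matching the definition of $h_k^{(l)}(c)$.
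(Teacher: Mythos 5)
Your proposal is correct in substance and follows essentially the same route as the paper: plug the explicit estimate from Lemma~\ref{lem:lower} into Lemma~\ref{lem:lowerBetti} to get the second entry of the max, use Lemma~\ref{lem:Morselower} for the first entry, and combine. However, there is a small but genuine logical slip in the final step. The event $\{\b_k(X_n)/f_k(X_n)<h_k^{(l)}(c)-\eps\}$ is \emph{not} contained in the intersection of $\{\b_k/f_k<(\text{first entry})-\eps\}$ and $\{\b_k/f_k<(\text{second entry})-\eps\}$ --- your parenthetical ``if the ratio falls below the maximum minus $\eps$, it falls below each entry minus $\eps$'' has the implication backwards. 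Rather, since $h_k^{(l)}(c)$ is the max of the two entries, the event $\{\b_k/f_k<h_k^{(l)}(c)-\eps\}$ equals the \emph{union} of the two events. Fortunately, you then invoke a union bound, which is exactly the right tool for the union, so your conclusion is correct; only the stated containment is wrong. The paper compresses this by observing $1-\tfrac c{k+2}+\bigl(0\vee\E'\bigl[D_k^{(l)}([\pt_k(c),\tau_o])\bigr]\bigr)\ge h_k^{(l)}(c)$, which is just a compact way of saying that whichever of the two entries achieves the max, the corresponding lemma (Morselower or lowerBetti) covers it --- your union-bound phrasing and the paper's $0\vee$ phrasing are two equivalent bookkeeping devices for the same argument.
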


\subsection{Proof of Theorem~\ref{thm:LT_B_HSIRSC}~$(1)$ and Corollary~\ref{cor:LT_B_HSIRSC}}
From the upper and lower bounds on the Betti number in Propositions~\ref{prop:UB_B} and~\ref{prop:lowerBetti}, respectively, we now prove the main result.
\begin{proof}[Proof of Theorem~\ref{thm:LT_B_HSIRSC}~$(1)$]
Let $\eps>0$ be fixed. We can take $l\in\N$ such that $|h_k^{(l)}(c)-h_k(c)|<\eps/2$ since $\lim_{l\to\infty}t_{k+1,c}^{(l)}=t_{k+1,c}$. Then, from Proposition~\ref{prop:lowerBetti}, we have
\begin{align*}
&\limsup_{n\to\infty}\P\left(\frac{\b_k(X_n)}{f_k(X_n)}<h_k(c)-\eps\relmiddle|\dim X_n\ge k\right)\\
&\le\limsup_{n\to\infty}\P\left(\frac{\b_k(X_n)}{f_k(X_n)}<h_k^{(l)}(c)-\eps/2\relmiddle|\dim X_n\ge k\right)=0. 
\end{align*}
Combining this estimate with Proposition~\ref{prop:UB_B}, we obtain
\begin{equation}\label{eq:LT_B_HSIRSC}
\lim_{n\to\infty}\P\left(\Bigl|\frac{\b_k(X_n)}{f_k(X_n)}-h_k(c)\Bigr|>\eps\relmiddle|\dim X_n\ge k\right)=0. 
\end{equation}
Furthermore, given the event $\{\dim X_n\ge k\}$, 
\begin{align}\label{eq:LT_B_HSIRSC1}
\biggl|\frac{\b_k(X_n)}{n^{k+1}q_k}-\frac{h_k(c)}{(k+1)!}\biggr|&\le\biggl|\frac{\b_k(X_n)}{n^{k+1}q_k}-\frac{\b_k(X_n)}{(k+1)!f_k(X_n)}\biggr|+\biggl|\frac{\b_k(X_n)}{(k+1)!f_k(X_n)}-\frac{h_k(c)}{(k+1)!}\biggr|\nonumber\\
&\le\biggl|\frac{f_k(X_n)}{n^{k+1}q_k}-\frac1{(k+1)!}\biggr|+\biggl|\frac{\b_k(X_n)}{f_k(X_n)}-h_k(c)\biggr|. 
\end{align}
For the second inequality above, we use a trivial upper bound $\b_k(X_n)\le f_k(X_n)$. By Eqs.~\eqref{eq:LT_B_HSIRSC} and~\eqref{eq:LT_B_HSIRSC1} together with Lemma~\ref{lem:num_faces}, a simple calculation provides the conclusion.
\end{proof}

\begin{proof}[Proof of Corollary~\ref{cor:LT_B_HSIRSC}]
Since
\begin{align*}
\b_i(X_n)=\dim Z^i(X_n)-\dim B^i(X_n)
\quad\text{and}\quad
f_i(X_n)=\dim Z^i(X_n)+\dim B^{i+1}(X_n)
\end{align*}
for $i\ge0$, we have
\[
\b_k(X_n)-\b_{k-1}(X_n)=f_k(X_n)-f_{k-1}(X_n)+\dim B^{k-1}(X_n)-\dim B^{k+1}(X_n).
\]
Therefore,
\begin{equation}\label{eq:corLT_B_HSIRSC1}
\frac{\b_k(X_n)}{n^{k+1}q_k}
=\frac{\b_{k-1}(X_n)}{(nr_{k-1})n^kq_{k-1}}+\frac{f_k(X_n)}{n^{k+1}q_k}-\frac{f_{k-1}(X_n)}{(nr_{k-1})n^kq_{k-1}}+\frac{\dim B^{k-1}(X_n)}{n^{k+1}q_k}-\frac{\dim B^{k+1}(X_n)}{n^{k+1}q_k}.
\end{equation}
Let $\eps>0$ be fixed. Applying Theorem~\ref{thm:LT_B_HSIRSC}~(1) with $k-1$ instead of $k$, we obtain
\begin{equation*}
\lim_{n\to\infty}\P\biggl(\biggl|\frac{\b_{k-1}(X_n)}{n^kq_{k-1}}-\frac{h_{k-1}(c)}{k!}\biggr|>\eps\biggr)=0.
\end{equation*}
In addition, since $n^{k+1}q_k=\om(1)$ and $n^kq_{k-1}=n^{k+1}q_k/(nr_{k-1})=\om(1)$ by the assumption, Lemma~\ref{lem:num_faces} yields
\begin{gather*}
\lim_{n\to\infty}\P\biggl(\biggl|\frac{f_k(X_n)}{n^{k+1}q_k}-\frac1{(k+1)!}\biggr|>\eps\biggr)=0
\shortintertext{and}
\lim_{n\to\infty}\P\biggl(\biggl|\frac{f_{k-1}(X_n)}{n^kq_{k-1}}-\frac1{k!}\biggr|>\eps\biggr)=0.
\end{gather*}
Next, we estimate the fourth and fifth terms in the right-hand side of Eq.~\eqref{eq:corLT_B_HSIRSC1}. Since $s_{k-1}=o(1)$ from Lemma~\ref{lem:qrs}, we have
\begin{gather*}
\frac{\E[\dim B^{k-1}(X_n)]}{n^{k+1}q_k}
\le\frac{\E[f_{k-2}(X_n)]}{n^{k+1}q_k}
\le\frac{n^{k-1}q_{k-2}}{n^{k+1}q_k}
=\frac1{(nr_{k-1})(nr_{k-2})}
=\frac{s_{k-1}}{(nr_{k-1})^2}
=o(1)
\shortintertext{and}
\frac{\E[\dim B^{k+1}(X_n)]}{n^{k+1}q_k}
\le\frac{\E[f_{k+1}(X_n)]}{n^{k+1}q_k}
\le\frac{n^{k+2}q_{k+1}}{n^{k+1}q_k}
=nr_k
=(nr_{k-1})s_k
\le(nr_{k-1})s_{k-1}
=o(1).
\end{gather*}
Therefore, from Markov's inequality, we have
\[
\lim_{n\to\infty}\P\biggl(\frac{\dim B^{k-1}(X_n)}{n^{k+1}q_k}>\eps\biggr)
=\lim_{n\to\infty}\P\biggl(\frac{\dim B^{k+1}(X_n)}{n^{k+1}q_k}>\eps\biggr)=0.
\]
Consequently, Eq.~\eqref{eq:corLT_B_HSIRSC1} together with all the estimates above implies that for any $\eps>0$,
\[
\lim_{n\to\infty}\P\biggl(\biggl|\frac{\b_k(X_n)}{n^{k+1}q_k}-\frac{g_k(c)}{(k+1)!}\biggr|>\eps\biggr)=0.
\]
Finally, using Lemma~\ref{lem:num_faces} with the trivial upper bound $\b_k(X_n)\le f_k(X_n)$, we can immediately conclude that for any $r\in[1,\infty)$,
\[
\lim_{n\to\infty}\E\biggl[\biggl|\frac{\b_k(X_n)}{n^{k+1}q_k}-\frac{g_k(c)}{(k+1)!}\biggr|^r\biggr]=0.\qedhere
\]
\end{proof}

\section*{Acknowledgements}
This study was supported by JSPS KAKENHI Grant Number 19J11237.
The author expresses gratitude to Professors Yasuaki~Hiraoka, Masanori~Hino, and Matthew~Kahle for their valuable comments.


%
%

\end{document}